\documentclass[11pt,letterpaper]{amsart}

\usepackage[margin=1.2in]{geometry}                

\usepackage{mathabx}
\usepackage{amsmath,amssymb,amsthm,amscd}
\usepackage{colonequals}

\usepackage{graphicx}
\usepackage{tikz}
\usepackage{tikz-cd}
\usetikzlibrary{decorations.markings}
\usetikzlibrary{plotmarks}
\usepackage{epstopdf}
\usepackage{import} 

\usepackage{multicol}

\usepackage{url}
\usepackage{color}
\usepackage[colorlinks,citecolor=blue,linkcolor=red!80!black]{hyperref}
\usepackage[nameinlink]{cleveref}
\DeclareGraphicsRule{.tif}{png}{.png}{`convert #1 `dirname #1`/`basename #1 .tif`.png}

\usepackage{comment}

\usepackage{marginnote}
\usepackage{todonotes}

\theoremstyle{plain}
\newtheorem{theorem}{Theorem}[section]
\newtheorem{lemma}[theorem]{Lemma}
\newtheorem{corollary}[theorem]{Corollary}

\newtheorem{proposition}[theorem]{Proposition}

\newtheorem{fact}[theorem]{Fact}

\crefname{claim}{Claim}{Claims}
\newtheorem*{claim*}{Claim}
\newtheorem*{introdet*}{{\Cref{thm:det_form}}}
\newtheorem*{intro-orientable*}{{\Cref{thm:orientable}}}

\newcounter{theoremalph}

\newtheorem{thmAlph}[theoremalph]{Theorem}

\theoremstyle{definition}
\newtheorem{definition}[theorem]{Definition}

\newtheorem{remark}[theorem]{Remark}

\newtheorem{example}[theorem]{Example}

\makeatletter
\let\c@equation\c@theorem
\makeatother
\numberwithin{equation}{section}

\newcommand{\wh}{\widehat}
\newcommand{\ol}{\overline}

\newcommand{\wt}{\widetilde}
\newcommand{\mf}{\mathfrak}

\newcommand{\mc}{\mathcal}
\newcommand{\BNS}{\mathrm{BNS}}
\newcommand{\define}{\emph}

\newcommand{\Z}{\mathbb{Z}}

\newcommand{\R}{\mathbb{R}}
\newcommand{\abs}[1]{\left\vert#1\right\vert}
\newcommand{\norm}[1]{\left\Vert#1\right\Vert}
\newcommand{\inv}{^{-1}}

\newcommand{\supp}{\mathrm{supp}}
\newcommand{\flow}{\psi}
\newcommand{\dt}{z} 
\newcommand{\posgen}{\bar{\dt}} 
\newcommand{\free}{\mathbb{F}} 
\newcommand{\map}{g} 

\title{Orientable maps and polynomial invariants of free-by-cyclic groups}
\author[Dowdall]{Spencer Dowdall}
\address{Department of Mathematics, Vanderbilt University, Nashville, TN}
\email{spencer.dowdall@vanderbilt.edu}
\author[Gupta]{Radhika Gupta}
\address{School of Mathematics, Tata Institute of Fundamental Research, Mumbai}
\email{rgupta@tifr.res.in}
\author[Taylor]{Samuel J. Taylor}
\address{Department of Mathematics, Temple University, Philadelphia, PA}
\email{samuel.taylor@temple.edu}
\date{}                                           

\begin{document}

\begin{abstract}
We relate the McMullen polynomial of a free-by-cyclic group to its Alexander polynomial. To do so, we introduce the notion of an \emph{orientable} fully irreducible outer automorphism $\varphi$ and use it to characterize when the homological stretch factor of $\varphi$ is equal to its geometric stretch factor.
\end{abstract}

\maketitle

\setcounter{tocdepth}{1}
\tableofcontents

\section{Introduction}
This paper introduces the property of \emph{orientability} for fully irreducible free group automorphisms and characterizes how it is reflected in both the stretch factors of the automorphism and in the Alexander and McMullen polynomials of the associated free-by-cyclic group.

To motivate our work, let us recall the theory from the mapping class group setting.
A pseudo-Anosov homeomorphism $\varphi$ of an orientable 
hyperbolic surface comes equipped with two notions of stretch factor:
The \emph{geometric} stretch factor or dilatation $\lambda_\varphi$ is the value by which $\varphi$ stretches the leaves of its expanding
foliation; $\lambda_\varphi$ is also characterized as the exponential growth rate of the geodesic length of any curve under iteration by $\varphi$, and its logarithm $\log\lambda_\varphi$ is both the topological entropy of $\varphi$ and the translation length for its action on Teichm\"uller space.
The \emph{homological} stretch factor $\rho_\varphi$ is instead the spectral radius for the action of $\varphi$ on the first homology of the surface.
It is well-known that these two numbers are equal if and only if the 
invariant foliations of $\varphi$ are transversely orientable. Indeed, that transverse orientability implies equality of the stretch factors was observed by Thurston  \cite{thurston1988geometry}, and the reverse implication was proven by Band and Boyland  \cite[Lemma 4.3]{BandBoyland}. 

In fact, a much stronger symmetry persists when $\varphi$ preserves these transverse orientations.
Associated to the mapping torus of a pseudo-Anosov homeomorphism there are two important polynomial invariants that arise: the classical Alexander polynomial \cite{alexander1928topological}, which captures homological information about monodromies, and the Teichm\"uller polynomial \cite{McMullen--poly_invariants} of the fibered cone, which encodes the associated geometric stretch factors.
In the orientable case, McMullen proved these invariants are closely related in that the Alexander polynomial divides the Teichm\"uller polynomial \cite[Theorem 7.1]{McMullen--poly_invariants}. 
More recently, Parlak \cite{Parlak} extended McMullen's work to explicitly determine the quotient polynomial, giving a precise relation between the two polynomials and the  associated stretch factors.

\subsection{Orientable automorphisms}
In this article, we extend the above results to the free group setting, replacing the pseudo-Anosov homeomorphism with a fully irreducible automorphism 
of a finite rank free group $\free$ and replacing the mapping torus $M_\varphi$ by the associated free-by-cyclic group.
In fact, we define what it means for a fully irreducible endomorphism to 
be \emph{orientable}
and prove this occurs exactly when the geometric and homological stretch factors agree.

The condition is best understood in the context of graph maps. 
For a graph map $f\colon G\to G$,
we define the geometric stretch factor $\lambda_f$ to be the spectral radius of the transition matrix of $f$, and the homological stretch factor $\rho_f$ to be the spectral radius of the induced map $f_*$ on $H_1(G;\R)$. 
An \emph{orientation} on $G$ is a choice of positive orientation for each edge of $G$. We say $f$ is \define{positively orientable} (resp.\ \define{negatively orientable}) if there exists an orientation on $G$ such that every positive edge maps to a positive (resp.\ negative) edge path. 
We often abbreviate this terminology to pos/neg-orientable or $\pm1$--orientable. 
Combining these notions, the map is called \define{orientable} if it respects some orientation on $G$, in that it is either positively orientable or negatively orientable.
See \S\ref{sec:orientable_maps} for additional details. 

\begin{thmAlph}\label{thm:orientable}\label{th:stretch_or}
Suppose $f \colon G \to G$ is a graph map whose transition matrix has a positive power. 
Then $\lambda_f = \rho_f$ if and only if $f$ is orientable. 
Moreover, $\lambda_f$ (resp.\ $-\lambda_f$) is an eigenvalue of $f_*$ if and only if $f$ is pos-orientable (resp.\ neg-orientable).
\end{thmAlph}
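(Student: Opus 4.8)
The plan is to study $f$ through its action on the real cellular chain complex of $G$. First I would fix an orientation on $G$, identify $C_1=C_1(G;\R)$ with $\R^{E(G)}$ and $C_0=C_0(G;\R)$ with $\R^{V(G)}$, and record the relevant matrices: $M$ for the induced map on $C_1$, the (unsigned) transition matrix $A$, and the $0$--$1$ matrix $P$ for the induced map on $C_0$ (which sends each vertex to its $f$-image). The key structural facts to establish at the outset are: $|M|\le A$ entrywise; $H_1(G;\R)=\ker(\partial\colon C_1\to C_0)$ is $M$-invariant with $M|_{H_1}=f_*$; and $\partial M=P\partial$, where $P$ is non-expanding in the $\ell^1$-norm, so every eigenvalue of $P$ has modulus $\le1$. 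These give the always-valid inequality $\rho_f=\rho(f_*)\le\rho(M)\le\rho(A)=\lambda_f$. I would also note the dictionary between re-orientations and matrices: flipping a set of edges conjugates $M$ by the corresponding diagonal sign matrix and fixes $A$, so that ``$f$ is pos-orientable'' (resp.\ neg-orientable) means precisely that $M=A$ (resp.\ $M=-A$) for some orientation.

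For the easy direction, assuming $f$ is pos-orientable I would pick an orientation with $M=A$ and let $v>0$ be a Perron eigenvector, $Av=\lambda_f v$ (assuming $\lambda_f>1$; the case $\lambda_f=1$ forces $A$ to be $1\times1$ and $G$ a circle, where everything is clear). Then $P(\partial v)=\partial(Mv)=\lambda_f\,\partial v$, and since $\lambda_f>1\ge\rho(P)$ this forces $\partial v=0$, so $v\in H_1$ and $f_*v=\lambda_f v$; hence $\lambda_f\in\mathrm{spec}(f_*)$ and $\rho_f=\lambda_f$. The neg-orientable case runs identically with an orientation giving $M=-A$ and the same $v$, yielding $f_*v=-\lambda_f v$. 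So orientability implies $\rho_f=\lambda_f$, and additionally pos- (resp.\ neg-) orientability puts $+\lambda_f$ (resp.\ $-\lambda_f$) into the spectrum of $f_*$.

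The substantive direction is the converse: given $\rho_f=\lambda_f$, I would first note that the inequality chain collapses to $\rho(M)=\lambda_f=:\lambda$ (again take $\lambda>1$). Since $A$ is primitive, hence irreducible, and $|M|\le A$ with $\rho(M)=\rho(A)$, I would invoke the classical theorem of Wielandt on equality in the Perron--Frobenius comparison to get a diagonal unitary matrix $D$ and $\phi\in\R$ with $|M|=A$ and $M=e^{i\phi}DAD^{-1}$; in particular $e^{i\phi}\lambda\in\mathrm{spec}(M)$. Since $M$ is real its spectrum is conjugation-invariant, while $\mathrm{spec}(M)=e^{i\phi}\,\mathrm{spec}(A)$ has $e^{i\phi}\lambda$ as its unique point of modulus $\lambda$ (primitivity of $A$); so conjugation fixes it, forcing $e^{i\phi}=\pm1$ and $M=\pm DAD^{-1}$. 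Next I would convert the diagonal $D$ into a re-orientation. Writing $D=\mathrm{diag}(d_e)$ and comparing entries, for each $(e,e')$ with $A_{ee'}>0$ the number $d_e\bar d_{e'}$ is real of modulus $1$, hence $\pm1$; the support digraph of $A$ is strongly connected, so multiplying these signs along paths from a fixed base edge $e_0$ gives signs $s_e:=d_e\bar d_{e_0}\in\{\pm1\}$ with $d_e\bar d_{e'}=s_es_{e'}$ for all $e,e'$, whence $M_{ee'}=\pm s_es_{e'}A_{ee'}$. Re-orienting $G$ by flipping the edges with $s_e=-1$ replaces $M$ by $\mathrm{diag}(s_e)\,M\,\mathrm{diag}(s_e)=\pm A$, so $f$ is pos- or neg-orientable, completing the first equivalence.

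For the refinement, if $\lambda_f\in\mathrm{spec}(f_*)$ then $\rho_f=\lambda_f$ and $f$ is orientable by the above; it cannot be neg-orientable, since then some orientation has chain matrix $-A$ whose spectrum $-\mathrm{spec}(A)$ omits $+\lambda$ (primitivity), and hence so does the spectrum of the invariant restriction $f_*$ --- contradicting $\lambda_f\in\mathrm{spec}(f_*)$. So $f$ is pos-orientable, and with the easy direction this is the claimed equivalence; the statement for $-\lambda_f$ and neg-orientability is symmetric. I expect the main obstacle to be exactly the converse direction just sketched: pinning down the right form of Wielandt's equality theorem for a complex matrix dominated by a primitive nonnegative matrix, and then ``integrating'' the unimodular diagonal $D$ --- which encodes the sign obstruction only locally, edge by edge --- into a single global re-orientation of $G$. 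This is where primitivity (rather than mere irreducibility) does the work: aperiodicity is what forces the dominant eigenvalue of $M$ to be real, hence $\pm\lambda_f$, which is the algebraic manifestation of the pos/neg dichotomy in orientability; without aperiodicity one instead sees genuine complex ``rotation'' eigenvalues of $M$ and no orientation at all.
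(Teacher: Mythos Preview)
Your argument is correct and takes a genuinely different route from the paper's. The paper proves the converse direction by constructing an \emph{oriented edge-double} $\widehat G$ of $G$ (two oppositely oriented copies of each edge, glued at vertices) together with a canonical positively orientable lift $\widehat f\colon \widehat G\to\widehat G$ and an involution $\sigma$ on $\widehat G$. They show $H_1(\widehat G)=E_1\oplus E_{-1}$ splits into $\sigma$-eigenspaces with $p_*\colon E_1\xrightarrow{\cong}H_1(G)$ and $E_{-1}\cong C_1(G)$, and then prove a key lemma: $\widehat f$ is primitive precisely when $f$ is \emph{not} orientable. Perron--Frobenius for $\widehat f$ then forces the unique $\lambda$-eigenvector into $E_{-1}=\ker p_*$, so every eigenvalue of $f_*$ on $H_1(G)\cong E_1$ has modulus strictly less than $\lambda$.

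Your approach instead stays entirely within classical nonnegative matrix theory: from $\rho(M)=\rho(A)$ with $|M|\le A$ and $A$ irreducible you invoke Wielandt's equality case to get $M=e^{i\phi}DAD^{-1}$, use primitivity of $A$ (unique dominant eigenvalue) together with reality of $M$ to pin down $e^{i\phi}=\pm1$, and then ``integrate'' the unitary diagonal $D$ along the strongly connected support digraph of $A$ into a global sign vector, which is exactly a re-orientation making the chain matrix equal to $\pm A$. This is shorter and avoids the auxiliary $2$-to-$1$ construction; the paper's edge-double, on the other hand, is a pleasant topological object that makes the pos/neg/non trichotomy visible as the three possible connectivity patterns of the directed graph $D(\widehat A)$ and yields the same conclusion via a covering-space style argument. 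Both proofs use primitivity (not mere irreducibility) at the decisive moment: in yours to force $e^{i\phi}\in\{\pm1\}$, in theirs to force the gcd of cycle lengths to be $1$, hence the lifted digraph to be either primitive or bipartite. Your handling of the ``moreover'' clause, by ruling out neg-orientability when $+\lambda_f\in\mathrm{spec}(f_*)$ via $\mathrm{spec}(f_*)\subset\mathrm{spec}(M)=-\mathrm{spec}(A)\not\ni\lambda$, matches the paper's use of Fact~2.1. One tiny remark: your parenthetical that $\lambda_f=1$ forces $G$ to be a circle is not quite right (a single non-loop edge also has $A=[1]$), but that case is degenerate with $H_1=0$ and the paper glosses over it in the same way.
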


By combining this with well-known properties of fully irreducible endomorphisms and their expanding laminations (see \S\ref{sec:endo}), we obtain the following corollary, which is a simplified statement of \Cref{th:stretch_or_auto}:
\begin{thmAlph}\label{thm:orientable-auto}
For a fully irreducible free group endomorphism $\varphi$,
the following are equivalent:
\begin{itemize}
\item the homological and geometric stretch factors of $\varphi$ are equal;
\item some (every) irreducible 
train track representative of $\varphi$ is orientable;
\item the expanding lamination $\mc L^+_\varphi$ of $\varphi$ is orientable.
\end{itemize}
\end{thmAlph}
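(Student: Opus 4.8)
The plan is to reduce the statement to \Cref{thm:orientable} applied to a train track representative, together with the standard dictionary---recalled in \S\ref{sec:endo}---between a fully irreducible endomorphism $\varphi$, its irreducible train track representatives $f\colon G\to G$, and its expanding lamination $\mc L^+_\varphi$.

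First I would fix an irreducible train track representative $f\colon G\to G$ of $\varphi$ whose transition matrix has a positive power; this exists by the basic structure theory of fully irreducible endomorphisms. For such an $f$ one has $\lambda_f=\lambda_\varphi$, since the geometric stretch factor is an invariant of $\varphi$, and $f_*$ on $H_1(G;\R)$ computes the homological stretch factor, so $\rho_f=\rho_\varphi$; both identifications are part of the setup in \S\ref{sec:endo}. Then \Cref{thm:orientable} immediately gives that $\lambda_\varphi=\rho_\varphi$ holds if and only if $f$ is orientable, which is the second bullet in its ``some representative'' form. Moreover, the ``moreover'' clause of \Cref{thm:orientable} identifies $\lambda_\varphi$ (resp.\ $-\lambda_\varphi$) being an eigenvalue of $\varphi_*$ with $f$ being pos-orientable (resp.\ neg-orientable), which gives the refined statement of \Cref{th:stretch_or_auto}.

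It remains to connect orientability of $f$ with orientability of the expanding lamination $\mc L^+_\varphi$, and to upgrade ``some representative'' to ``every representative''; I would do both at once via the chain (orientable for some representative) $\Rightarrow$ ($\mc L^+_\varphi$ orientable) $\Rightarrow$ (orientable for every representative), the implication (every $\Rightarrow$ some) being trivial. For the first implication, an orientation on $G$ witnessing orientability of $f$ coherently orients every bi-infinite legal path arising as a limit of iterates $f^n(e)$, hence orients the leaves of $\mc L^+_\varphi$; this orientation is $\varphi$-invariant up to the global sign present in the negatively orientable case, because $f$ sends positive edges to positive (resp.\ negative) edge paths. For the converse, let $f'\colon G'\to G'$ be any irreducible train track representative. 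Then $\mc L^+_\varphi$ is carried by $G'$ and, by full irreducibility, is filling: every edge of $G'$ and every legal turn taken by $f'$ is realized inside a leaf. A coherent orientation of the leaves of $\mc L^+_\varphi$ therefore forces, on each edge of $G'$, an orientation that does not depend on which leaf is consulted, and $f'$ respects the resulting orientation of $G'$ because it carries leaf segments to leaf segments preserving the induced orientation.

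I expect the genuinely delicate point to be this last equivalence: one must pin down the right notion of an orientable lamination---a coherent choice of leaf orientations, invariant under $\varphi$ up to overall sign---and then check carefully that the filling property of $\mc L^+_\varphi$ makes leafwise orientations and graph orientations correspond. Everything else is bookkeeping on top of \Cref{thm:orientable} and the established properties of fully irreducible endomorphisms.
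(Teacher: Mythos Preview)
Your reduction of the equivalence between the first two bullets to \Cref{thm:orientable} is exactly what the paper does: irreducible train track representatives of fully irreducible endomorphisms are primitive (\Cref{lem:get_prim}), and then \Cref{thm:orientable} applies directly since $\lambda_f=\lambda_\varphi$ and $\rho_f=\rho_\varphi$.

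Where you diverge from the paper is in handling the lamination. You propose the chain (some $f$ orientable) $\Rightarrow$ ($\mc L^+_\varphi$ orientable) $\Rightarrow$ (every $f'$ orientable), arguing the second implication by pushing a coherent leaf orientation down to edge orientations on any train track $G'$ via the filling property. The paper instead closes the cycle as (3) $\Rightarrow$ (4) $\Rightarrow$ (1): it defines orientability of $\mc L^+_\varphi$ not in a graph but in the attracting tree $T^+_\varphi$ (\Cref{def:orientable_lamination}), where overlapping leaf images must carry compatible orientations, and then uses this to build an explicit cohomology class $\xi\in H^1(\free;\R)$ by integrating the signed transverse measure along arcs of $T^+_\varphi$ (\Cref{prop:cocyle}); this $\xi$ is a $\pm\lambda_\varphi$--eigenvector of $\varphi^*$, recovering condition (1) directly. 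Your route avoids $T^+_\varphi$ and the cocycle entirely, which is more elementary, but the step ``leaf orientations determine a consistent edge orientation on $G'$\,'' is exactly where one needs the overlap-compatibility built into the paper's tree-based definition; with a naive ``orient each leaf $\free$--equivariantly'' definition, two leaves could a priori cross the same edge of $G'$ with opposite orientations. So the delicate point you flag is real, and the paper resolves it by passing through $T^+_\varphi$ rather than by the direct graph argument you sketch.
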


\subsection{Splittings and the BNS-invariant}
\label{sec:intro_BNS}
To frame our results relating Alexander and McMullen polynomials, we start with a corollary characterizing when geometric and homological stretch factors agree for monodromies of free-by-cyclic groups.

Any group endomorphism $\phi\colon B\to B$ determines a \emph{generalized HNN extension}
\[B\ast_\phi \colonequals  \langle B, t \mid t\inv bt = \phi(b)\text{ for all }b\in B\rangle\]
that comes equipped with a projection $B\ast_\phi\to \Z$ sending $B$ to $0$ and $t$ to $1$; we call this element of $\mathrm{Hom}(B \ast_\phi,\Z) = H^1(B \ast_\phi;\Z)$ the \define{dual cohomology class}.

In the case that $\Gamma$ is finitely generated, there is an open, $\R_+$--invariant subset $\BNS(\Gamma)$ of $H^1(\Gamma;\R)\setminus\{0\}$ that governs finite generation of kernels of such maps $\Gamma\to \Z$. 
This \define{Bieri--Neumann--Strebel invariant} \cite{BNS} is defined to contain precisely those rational classes $u\colon \Gamma\to \mathbb{Q}$ for which $\ker(u)$ is finitely generated over the $u$--positive monoid, 
meaning there is an element $t\in \Gamma$ with $u(t) > 0$ so that $\ker(u)$ is generated by the positive conjugates $\{t^n X t^{-n} \mid n\ge 0\}$ of some finite subset $X\subset\ker(u)$.
Unpacking this, one finds that
$u\colon \Gamma\to \Z$ lies in $\BNS(\Gamma)$ if and only if it is the dual class of an HNN splitting $\Gamma\cong B\ast_\phi$ over a \emph{finitely generated} base group $B$ \cite[Proposition 4.3]{BNS}.
We will say that any such splitting $\Gamma\cong B\ast_\phi$ (with $B$ finitely generated) is \emph{dual} to $u$
and call $\phi$ a \emph{monodromy} associated to $u$.

Now suppose that $\Gamma$ is free-by-cyclic. For any splitting $\Gamma \cong B*_\phi$ dual to a class $u \in \BNS(\Gamma)$, the group $B$ 
is free.  
Although the monodromy $\phi$ is not uniquely determined by $u$, 
its geometric and homological stretch factors are determined by $u$ and 
these are denoted by $\lambda(u)$ and $\rho(u)$, respectively. See \S\ref{sec:relating} for details.

\begin{thmAlph}
\label{thm:intro-identify_orientable_classes}
Consider the free-by-cyclic group $\Gamma = \free\ast_\varphi$, where $\varphi$ is a fully irreducible automorphism, and let $\mathcal{C}$ be the component of $\BNS(\Gamma)$ containing the dual class of $\free\ast_\varphi$.
 Then the set of all primitive integral  $u \in \mc C$ such that $\lambda(u) = \rho(u)$ is either $(1)$ the entire cone, $(2)$ exactly one residue class mod $2$, or $(3)$ empty.
\end{thmAlph}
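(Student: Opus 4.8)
The plan is to understand how orientability propagates across the fibered cone $\mc C$ by comparing monodromies at different integral classes via a common finite-index picture, and then to observe that the three alternatives in the statement correspond precisely to which cosets mod $2$ inherit orientability. By \Cref{thm:orientable-auto}, for a primitive integral $u\in\mc C$ the condition $\lambda(u)=\rho(u)$ is equivalent to the associated monodromy $\phi_u$ being orientable (pos- or neg-), which in turn is equivalent to orientability of the expanding lamination $\mc L^+$ restricted to the fiber determined by $u$. So the real content is a statement about how a single geometric object—the expanding lamination of the flow/semiflow on the associated $2$-complex or graph-of-spaces model—meets the various fibers, and whether it admits a consistent transverse orientation on each.

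First I would set up the standard model: the free-by-cyclic group $\Gamma=\free\ast_\varphi$ with dual class $u_0$ acts on a tree/complex carrying a flow whose expanding lamination $\Lambda$ is canonically associated to $\varphi$ (this is the McMullen/Dowdall--Kapovich--Leininger picture for the fibered cone). For each primitive integral $u\in\mc C$ the monodromy $\phi_u$ is represented (up to the flow equivalence discussed in \S\ref{sec:relating}) by the first-return map of the semiflow to a section dual to $u$, and its train track structure is the induced structure from $\Lambda$. Orientability of $\phi_u$ is then exactly the existence of a section-wise coherent choice of transverse orientation to $\Lambda$ compatible with the first-return dynamics; pos-orientability means the first return preserves this transverse orientation and neg-orientability means it reverses it. The key technical step is to show that whether such a transverse orientation exists, and its pos/neg type, depends on $u$ only through its class in $H_1$ modulo $2$ of a suitable double cover. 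Concretely: a transverse orientation of $\Lambda$ (if one exists at all globally, possibly only after passing to the orientation double cover $\wt X\to X$ of the flow along $\Lambda$) is a deck-equivariant choice, and the monodromy $\phi_u$ lifts to the orientation cover preserving vs. reversing the co-orientation according to the parity $\langle w, u\rangle \bmod 2$, where $w\in H^1(X;\Z/2)$ is the obstruction class classifying the orientation double cover of the lamination.

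With this in hand the trichotomy is immediate from linear algebra over $\Z/2$. There are three cases for the pair $(\text{global transverse orientability of }\Lambda,\ w)$: (i) $\Lambda$ is transversely orientable and $w=0$ — then every $u\in\mc C$ is pos-orientable and we are in case $(1)$, the entire cone. (ii) $\Lambda$ is transversely orientable but $w\neq 0$ on $\mc C$ — then $u$ is pos-orientable iff $\langle w,u\rangle=0$ and neg-orientable iff $\langle w,u\rangle=1$, so exactly one residue class mod $2$ (namely $\{u:\langle w,u\rangle=0\}$, noting $w$ is primitive on $\mc C$ because the cone is full-dimensional and $w\ne 0$) consists of orientable classes — this is case $(2)$. (iii) $\Lambda$ is not transversely orientable even after the relevant double cover — then no $u$ is orientable and we are in case $(3)$, empty. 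I would phrase (ii) carefully to rule out the degenerate possibility that $\langle w,u\rangle$ is constant on the primitive integral points of $\mc C$; since $\mc C$ is an open cone of full dimension $b_1(\Gamma)\ge 2$ (it contains $u_0$ and the fully irreducible hypothesis forces $b_1\ge 2$... if $b_1=1$ the cone is a ray and the statement still holds trivially, which I would note separately), the functional $\langle w,\cdot\rangle$ is nonconstant on the integral points whenever $w\ne 0$.

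The main obstacle I anticipate is proving cleanly that pos/neg-orientability of the first-return monodromy $\phi_u$ is governed by the single cohomological parity $\langle w,u\rangle$ — i.e., that the "type" of orientability is locally constant on the cone away from the mod-$2$ walls and flips across them in the expected way. This requires (a) a well-defined global transverse orientability obstruction $w\in H^1(\;\cdot\;;\Z/2)$ for the expanding lamination of the flow that restricts correctly to each fiber, and (b) an identification of the induced orientation on a fiber section with the pullback of a fixed orientation under the flow, which is where the parity $\langle w,u\rangle$ enters. I would handle (a) by defining $w$ as the obstruction to orienting the attracting lamination's leaves (equivalently, the first Stiefel--Whitney class of the unstable "line field" transverse to $\Lambda$ along the flow), using that for fully irreducible $\varphi$ the lamination is minimal so this is a single $\Z/2$-class; and (b) by a direct check using a train track representative at $u_0$ and the flow-equivalence transition maps of \Cref{th:stretch_or_auto} / \S\ref{sec:relating}, reducing to \Cref{thm:orientable} applied fiber-by-fiber. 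If a fully geometric flow model is awkward, an alternative is to argue entirely on the level of the integral Alexander/McMullen picture: orientability of $\phi_u$ is detected (via \Cref{thm:orientable}) by $\pm\lambda(u)$ being an eigenvalue of $(\phi_u)_*$, and one can track the sign of this eigenvalue along $\mc C$ using continuity of $\lambda(u)$ (which is real-analytic and concave on the cone, hence never $0$) together with the fact that eigenvalues of the homology action vary continuously — the sign of the dominant real eigenvalue can only change by passing through a locus where the relevant characteristic-polynomial coefficient has fixed parity, again a mod-$2$ condition. Either route reduces the problem to the $\Z/2$ linear algebra above.
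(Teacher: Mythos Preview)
Your trichotomy is miscalibrated, and this is a genuine gap rather than a matter of detail. In your case~(ii) you assert that when the lamination is globally orientable but $w\ne 0$, the class $u$ is pos-orientable when $\langle w,u\rangle=0$ and neg-orientable when $\langle w,u\rangle=1$. That would make \emph{every} primitive integral $u\in\mc C$ orientable (hence $\lambda(u)=\rho(u)$ throughout), which is case~(1) of the theorem, not case~(2). Moreover, the set $\{u:\langle w,u\rangle=0\}$ is an index-$2$ subgroup of $H^1(\Gamma;\Z/2)$, comprising $2^{r-1}$ residue classes mod~$2$, not a single coset $u_0+2H^1$; the theorem's case~(2) really does mean one coset out of $2^r$. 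So your obstruction-class framework, as stated, cannot produce the correct answer.

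What actually happens in case~(2), and what your picture misses, is that the presence of a neg-orientable class forces all other residue classes mod~$2$ to be \emph{non-orientable}, not pos-orientable. The paper establishes this not via a global line-field obstruction but via polynomial identities: a pos-orientable cross section gives $\wt A_\Theta=\wt M_\Theta$ and hence $\mf m\doteq\Delta_\Gamma\cdot\mf p$, which specializes to show every class is pos-orientable (\Cref{th:orientable_cone}); a neg-orientable cross section gives $\wt A_\Theta=-\wt M_\Theta$ and hence $\iota_u(\mf m)\doteq\Delta_\Gamma\cdot\mf p$ for the sign involution $\iota_u$ determined by $u\bmod 2$. One then proves (\Cref{lem:or_class}) that this involution is independent of which neg-orientable $u$ is chosen, by using that the support of the McMullen polynomial generates $H$; specializing at $\zeta$ then shows $\zeta$ is orientable iff $\zeta\equiv u\bmod 2$ (\Cref{th:anti_cone}). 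Your alternative continuity argument also does not work as written: the homology action $(\phi_u)_*$ and its eigenvalues do not vary continuously in $u$, since $u$ ranges over a lattice and the relevant characteristic polynomials are discrete specializations of $\Delta_\Gamma$. If you want to salvage an obstruction-theoretic approach, you would need to explain why, in the neg-orientable situation, the first-return map on a section in the wrong residue class \emph{fails} to be orientable at all---this is the step your outline skips.
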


When $\varphi$ is additionally atoroidal, it is known that \emph{every} monodromy associated to any primitive integral class $u\in \mc C$ is also fully irreducible \cite{DKL-endos,mutanguha--invariance_of_iwip}; hence in light of \Cref{thm:orientable}, the classes with $\lambda(u) = \rho(u)$ described in \Cref{thm:intro-identify_orientable_classes} are precisely those whose monodromy endomorphisms are orientable; see \Cref{rem:iso_stretch_implies_orient_endo}.

\subsection{Relating the Alexander and McMullen polynomials}
\label{sec:relating_alex_and_mcmullen_polys}
The trichotomy in \Cref{thm:intro-identify_orientable_classes} is obtained by explicitly relating two polynomial invariants attached to $\Gamma = \free\ast_\varphi$.
The first of these, the Alexander polynomial, was originally defined by Alexander in 1928 \cite{alexander1928topological} as a knot invariant, and its definition 
was later extended by McMullen to all finitely generated groups \cite{McMullen--Alexander_polynomial}. 
It is an invariant of the group $\Gamma$ and is formally defined, up to a unit, as an element $\Delta_\Gamma$ of the ring $\Z[H]$, where $H=H_1(\Gamma;\Z)/\mathrm{torsion}$; see \S\ref{sec:alex_poly_of_free-by-cyclic} or the next subsection \S\ref{sec:intro-computing_alex} below for details.

The second of these, the McMullen polynomial, was defined by Dowdall--Kapovich--Leininger in \cite{DKL2} as an analog of McMullen's Teichm\"uller polynomial from \cite{McMullen--poly_invariants}. 
It is defined in terms of an irreducible train track representative $f\colon G\to G$ of a fully irreducible automorphism $\varphi$ of $\pi_1(G)\cong \free$ and an associated \define{folded mapping torus} $X^\dagger$ that comes equipped with a natural semiflow $\flow^\dagger$ and is homotopy equivalent to the usual mapping torus of $f$.
The \define{McMullen polynomial} $\mf m\in \Z[H]$ is an algebraic invariant of this dynamical system $(X^\dagger,\flow^\dagger)$. The key features of $\mf m$ are that it picks out the component $\mc C$ of the BNS-invariant containing the dual class of the splitting $\Gamma = \free\ast_{\varphi}$ and calculates the geometric stretch factors $\lambda(u)$ of the classes $u\in \mc C$, as in \Cref{thm:intro-identify_orientable_classes}. We refer the reader to \cite{DKL2} and \S\ref{sec:fmt} for more details.

To connect these, in \S\ref{sec:vertex_poly} we introduce an additional \emph{vertex polynomial}
\[\mf p = \prod_i (1-z_i)\in \Z[H],\]
of the dynamical system $(X^\dagger, \flow^\dagger)$, where $\{z_i\}$ are the elements in $H = H_1(X^\dagger;\Z)/\text{torsion}$ represented by the finitely many closed orbits of $\flow^\dagger$ through the vertices of $X^\dagger$.
Our next theorem shows that these three polynomials are related in a precise way that moreover reflects the positive, negative, or non-orientability of $f$:

\begin{thmAlph}\label{th:relation1}
Let $f \colon G \to G$ be an irreducible train track map 
representing a fully irreducible automorphism $\varphi$ of $\pi_1(G)\cong \free$,
with associated free-by-cyclic group $\Gamma$ and folded mapping torus $(X^\dagger, \flow^\dagger)$. 
 If  $\mathrm{rank}(H_1(X, \R)) \geq 2$, then up to a unit:
 \begin{enumerate}
 \item if $f$ is pos-orientable, then $\mf m = \Delta_{\Gamma} \cdot \mf p$, and
 \item if $f$ is neg-orientable, then there is an \emph{orientation involution} $\iota \colon \Z[H]\to \Z[H]$ for which  $ \iota (\mf m) = \Delta_\Gamma \cdot \mf p$.
 \item in general, $\mf m = \Delta_{\Gamma} \cdot \mf p$ mod 2.
 \end{enumerate}
 If instead $\mathrm{rank}(H_1(\Gamma;\R)) = 1$, then each equations holds after multiplying $\mf m$ by $(\dt - 1)$, where $\dt$ generates $H_1(\Gamma, \Z)$.
\end{thmAlph}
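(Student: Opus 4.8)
The plan is to compute all three polynomials from a single combinatorial model—the folded mapping torus $X^\dagger$ with its cell structure coming from $f\colon G\to G$—and compare the resulting matrices over $\Z[H]$. Recall that $\mf m$ is defined (in \cite{DKL2}) via the chain complex of the infinite cyclic-type cover of $X^\dagger$ associated with the $H$-action, and concretely arises as (a gcd-type invariant of) the presentation matrix $I - P_A$, where $P_A$ is the ``twisted'' transition matrix of $f$ on edges of $G$, with entries in $\Z[H]$ recording both the combinatorial incidence and the group element traversed by the flow. The Alexander polynomial $\Delta_\Gamma$ is likewise the order of $H_1$ of the universal abelian cover, computable from a presentation $2$-complex of $\Gamma$; since $\Gamma = \pi_1(X^\dagger)$ and $X^\dagger$ deformation retracts onto a $2$-complex built from $G$ and the mapping cylinder of $f$, one gets $\Delta_\Gamma$ as a gcd of minors of the Fox-derivative matrix, which—after choosing the obvious presentation with generators the edges of $G$ and $t$, and relators $t^{-1}e_j t = f(e_j)$—is essentially $I - \wh{P}$ where $\wh P$ is the abelianized transition matrix over $\Z[H]$. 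The vertex polynomial $\mf p = \prod_i(1-z_i)$ is exactly the contribution of the $0$-cells (vertices of $G$, swept into circles by the semiflow) to the Euler-characteristic/torsion bookkeeping.

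The first step is to set up this common chain complex carefully and identify, over $\Z[H]$, the chain groups in each degree and the boundary maps, tracking the flow-decorations. The second step is the key algebraic identity: the McMullen module has a free resolution whose Alexander-polynomial-analog is $\det(I-P_A)$-like, while $\Delta_\Gamma$ computes the same torsion invariant but for a complex with the extra $0$-cells; a multiplicativity-of-torsion (or a block-triangular determinant) argument then yields $\mf m = \Delta_\Gamma\cdot\prod_i(1-z_i)$ up to units. This is where the hypothesis $\mathrm{rank}(H_1(X;\R))\ge 2$ enters: it guarantees the relevant modules are torsion over $\Z[H]$ so that ``order ideal = determinant'' and the gcd manipulations are valid; in the rank-$1$ case the module acquires a free summand, which is precisely why one must multiply $\mf m$ by $(z-1)$ to restore the identity (the standard rank-$1$ correction factor, exactly as in McMullen's original $H^1$-rank-$1$ caveat for Alexander polynomials).

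The third step handles orientability. When $f$ is pos-orientable, choose the orientation making every positive edge map to a positive path; then the transition matrix $P_A$ is already ``sign-coherent'' and the twisted matrices for $\mf m$ and for $\Delta_\Gamma$ literally agree, giving (1) on the nose. When $f$ is neg-orientable, the orientation makes positive edges map to \emph{negative} paths, so the incidence signs in $\wh P$ differ from those in $P_A$ by a global sign twist; this is encoded by an involution $\iota$ of $\Z[H]$ negating the image of a suitable cohomology class (the ``orientation class'' detecting how many sign-flips occur along flow lines), and applying $\iota$ intertwines the two matrices, yielding (2). Part (3) is then immediate: mod $2$ all signs disappear, $\iota$ becomes the identity, and the two twisted matrices coincide regardless of orientability, so $\mf m \equiv \Delta_\Gamma\cdot\mf p \pmod 2$ with no hypothesis on $f$.

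The main obstacle I anticipate is the second step—pinning down the torsion/determinant bookkeeping precisely enough that the vertex factor $\prod_i(1-z_i)$ emerges with the correct exponents and that no stray units or extra $(z_i-1)$ factors appear. In particular one must be careful that the closed orbits through vertices are counted with the right multiplicity (each vertex of $G$ contributes one circle, but a single flow-orbit may pass through several vertices, so the $z_i$ are indexed by orbits, not vertices) and that the comparison of the two $2$-complexes—one with $0$-cells collapsed, one without—is done via a genuine simple-homotopy equivalence so that Reidemeister-torsion multiplicativity applies cleanly. Verifying the rank-$1$ correction factor is $(z-1)$ and not some associate will require tracing through the definition of $\mf m$ in \cite{DKL2} in that degenerate case; I expect this to be a short but fiddly computation once the general-rank case is in hand.
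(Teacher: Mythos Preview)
Your overall strategy matches the paper's: both sides are computed by determinant formulas---$\mf m \doteq \det(\dt I - \wt A)$ from \cite{DKL2} and $\Delta_\Gamma \cdot \mf p \doteq \det(\dt I - \wt M)$ from the paper's Theorem~E together with the identification $\mf p \doteq \det(\dt I - \wt P)$---and orientability is exactly what makes the lifted transition matrix $\wt A$ agree with the lifted chain map $\wt M$ (equal when pos-orientable, differing by a global sign handled by the involution $\iota$ when neg-orientable, and always equal mod~$2$). Your step~3 captures this precisely.

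However, your step~2 is misstated and, read literally, claims something false. You write that a multiplicativity-of-torsion argument yields $\mf m = \Delta_\Gamma\cdot\mf p$ \emph{before} orientability is invoked. This does not hold: the McMullen polynomial is not the Reidemeister torsion of $X^\dagger$, nor is its underlying module obtained from the Alexander module by ``forgetting the $0$-cells.'' The module of transversals is built from the \emph{unsigned} incidence matrix $\wt A$, while the Alexander module uses the \emph{signed} cellular chain map $\wt M$; these are different $\Z[H]$-modules with no a~priori multiplicativity relation. The correct logical structure is that step~2 should establish the two determinant formulas \emph{separately}---and this is where essentially all the work lies. The paper devotes \S3 to proving $\Delta_\Gamma \doteq \det(\dt I - \wt M)/\det(\dt I - \wt P)$ by a hands-on analysis of the cellular chain complex of the free abelian cover relative to a maximal tree (your anticipated ``main obstacle''), not by appealing to abstract torsion or simple-homotopy machinery. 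Only after both formulas are in hand does step~3 equate them via orientability.

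One smaller point: your Fox-calculus sketch takes the edges of $G$ as group generators, but they are not a basis of $\pi_1(G)$. The paper avoids this by working directly with relative cellular chains $C_*(\wt X,\wt T)$ for a well-chosen maximal tree $T$, which also makes the vertex-cycle factor $\prod_i(1-c_i)$ emerge cleanly (their Lemma~3.2) and produces the rank-$1$ correction $(\dt-1)$ as the gcd of the augmentation-ideal generators rather than as a separate computation.
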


Moreover, \Cref{th:relation1} applies not only to the original monodromy, but also to any monodromy dual to a primitive integral class in the corresponding component $\mc C$ of the BNS-invariant, so that its 3 conclusions pair with the trichotomy of \Cref{thm:intro-identify_orientable_classes}. Indeed, 
the involution $\iota$ in the neg-orientable case comes from an \emph{orientation class} that depends only on $\mc C$ (\Cref{lem:or_class}).
This more general perspective is taken in 
\Cref{th:polynomials,th:anti_cone,th:polynomials_mod2}, which, respectively, give more detailed treatments of the conclusions (1), (2) and (3) and, collectively, prove \Cref{th:relation1}.

\subsection{Computing the Alexander polynomial}
\label{sec:intro-computing_alex}
Although the Alexander polynomial can always be efficiently computed from a group presentation via Fox Calculus (see \cite{Button} or \cite{hironaka2011computing} for relevant examples), our proof of \Cref{th:relation1} 
requires a computation of $\Delta_\Gamma$ that is compatible with the definition of $\mf m$.
Our final result provides this by giving an explicit `determinant formula' for the Alexander polynomial that is inspired by McMullen's formula for the Teichm\"uller polynomial.

To explain, recall that any graph map $f\colon G\to G$ determines a \emph{mapping torus}
\[
X = X_f \colonequals \frac{G \times [0,1]} { (x,1) \sim (f(x),0)},
\] 
whose fundamental group $\Gamma$ splits as a generalized HNN extension, which we write as $\Gamma \cong \pi_1(G)\ast_f$. Here, the associated dual class of the splitting is obtained by pulling back the fundamental class of the circle under the associated fibration $X \to \mathbb{S}^1$. 

Our determinant formula computes the Alexander polynomial $\Delta_\Gamma$ of $\Gamma = \pi_1(X)$ directly in terms of the graph map $f$. 
Let $\wt{X}\to X$ be the universal free abelian cover of $X$ with deck group $H$. Fix a connected component $\wt{G}_0$ of the full preimage of $G$ in $\wt{X}$ and fix a lift $\wt{f} \colon \wt{G}_0 \to \wt{G}_0$ of $f$. 
The deck group of $\wt{G}_0 \to G$ is denoted by $K$ and is naturally identified with the image of the inclusion induced homomorphism $\pi_1(G) \to \pi_1(X) \to H$.
Moreover, the choice of lift $\wt f \colon \wt G_0 \to \wt G_0$ determines (see \S\ref{sec:abelian_cover}):
\begin{itemize}
\item a splitting $H = K \oplus \langle \dt \rangle$, where $\dt$ translates against the lifted semiflow,
\item $\Z[K]$--valued matrices $\wt M$ and $\wt P$ representing the action of $\wt f$ on the free $\Z[K]$--modules
 of $1$--chains and $0$--chains of $\wt G_0$, respectively. 
\end{itemize}

\begin{thmAlph}[Determinant formula for $\Delta_{\Gamma}$] \label{thm:det_form}\label{th:det_form}
Let $f \colon G \to G$ be a graph map whose mapping torus $X$ has fundamental group $\Gamma$. 
Then up to a unit in $\mathbb{Z}[H]$, 
\[
\Delta_\Gamma = \frac{\det(\dt I -\wt M)}{\det(\dt I-\wt P)} \cdot r,
\]
where $r=1\in \Z[H]$ if $\mathrm{rank}(H_1(X, \R)) \ge 2$ and $r = (\dt-1)$ 
if $\mathrm{rank}(H_1(X, \R)) =1$.
\end{thmAlph}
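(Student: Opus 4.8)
The plan is to compute both sides of the claimed identity from the cellular chain complex of the universal free abelian cover $\wt X\to X$, exhibited as an algebraic mapping torus of the lift $\wt f$, and then to identify $\Delta_\Gamma$ with (a correction of) the Reidemeister torsion of $X$ with coefficients in the fraction field $\mc Q$ of $\Z[H]$.

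First I would write down the complex. Using the product cell structure on $X=(G\times[0,1])/\!\sim$ together with the splitting $H=K\oplus\langle\dt\rangle$ and the matrices $\wt M,\wt P$ of \S\ref{sec:abelian_cover}, the cellular chains of $\wt X$ form a complex of free $\Z[H]$--modules
\[
0\longrightarrow\Z[H]^E\xrightarrow{\ \partial_2\ }\Z[H]^E\oplus\Z[H]^V\xrightarrow{\ \partial_1\ }\Z[H]^V\longrightarrow 0,
\]
in which, for $E$ and $V$ the edge and vertex counts of $G$, the first summand of $C_1$ is spanned by the horizontal lifted edges of $\wt G_0$ and the second by the vertical edges traced by the semiflow through vertices, while $C_2$ is spanned by the $2$--cells over the horizontal edges. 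Unwinding the attaching maps — this is where the hypothesis that $\wt f$ is a genuine self-map of $\wt G_0$, so that $\wt M,\wt P$ are honest $\Z[K]$--matrices, is used — one obtains, up to reordering coordinates and writing $\wt\partial$ for the $V\times E$ boundary matrix of $\wt G_0$,
\[
\partial_1=\begin{pmatrix}\wt\partial & \dt^{-1}\wt P-I\end{pmatrix},\qquad \partial_2=\begin{pmatrix}\dt^{-1}\wt M-I\\ -\wt\partial\end{pmatrix},
\]
the $\dt^{-1}$'s arising because $\dt$ translates against the semiflow, and the identity $\partial_1\partial_2=0$ being exactly the chain-map relation $\wt\partial\,\wt M=\wt P\,\wt\partial$. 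Since $\det(\dt^{-1}\wt M-I)=(-\dt^{-1})^E\det(\dt I-\wt M)$ and $\det(\dt I-\wt M)$ is monic of degree $E$ in $\dt$, this block is a nonzero element of $\Z[H]$, so $\partial_2$ is injective; likewise $\det(\dt^{-1}\wt P-I)\neq 0$.

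Next I would extract the torsion. Because $b_1(\Gamma)\geq 1$, the augmentation ideal $I_H\subset\Z[H]$ contains a unit of $\mc Q$, so $H_0(\wt X;\mc Q)=0$; with $H_2(\wt X;\mc Q)=0$ (injectivity of $\partial_2$) and $\chi(\wt X)=0$ this forces $\wt X$ to be $\mc Q$--acyclic, so its Reidemeister torsion $\tau(X)\in\mc Q^\times/{\pm H}$ is defined, and since every $H_i(\wt X;\Z[H])$ is then a torsion $\Z[H]$--module, $\tau(X)\doteq\operatorname{ord}H_1(\wt X)/\operatorname{ord}H_0(\wt X)$. Here $H_0(\wt X)\cong\Z[H]/I_H$ has order ideal $\gcd(I_H)$, which is $1$ when $b_1(\Gamma)\geq 2$ and $\dt-1$ when $b_1(\Gamma)=1$; that is, $\operatorname{ord}H_0(\wt X)\doteq r$. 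On the other hand, by the definition of $\Delta_\Gamma$ and the computation of Fitting ideals recalled in \S\ref{sec:alex_poly_of_free-by-cyclic} — using the exact sequence $0\to H_1(\wt X)\to H_1(\wt X,\wt v_0)\to I_H\to 0$, whose torsion submodule is $H_1(\wt X)$ — one has $\Delta_\Gamma\doteq\operatorname{ord}H_1(\wt X)$, whence $\Delta_\Gamma\doteq\tau(X)\cdot r$. Finally, recognizing the complex above as the algebraic mapping torus of the chain self-map $\dt^{-1}\wt f_\#$ on $C_*(\wt G_0)\otimes_{\Z[K]}\Z[H]$, Milnor's determinant formula for the torsion of a mapping torus (applicable precisely because both determinants are nonzero) gives
\[
\tau(X)\doteq\frac{\det(\dt^{-1}\wt M-I)}{\det(\dt^{-1}\wt P-I)}\doteq\frac{\det(\dt I-\wt M)}{\det(\dt I-\wt P)},
\]
and combining this with $\Delta_\Gamma\doteq\tau(X)\cdot r$ yields the asserted formula. (Alternatively one can avoid the torsion formalism and compute $\tau(X)$ directly by a Schur-complement reduction on $\partial_1,\partial_2$, using that the block $\dt^{-1}\wt P-I$ of $\partial_1$ is invertible over $\mc Q$; this is the variant closest to McMullen's treatment of the Teichm\"uller polynomial.)

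I expect the main obstacle to be the first part: faithfully transcribing the geometry of the mapping torus and its free abelian cover into the displayed block matrices with the correct $\dt$--normalization — so that genuinely $\det(\dt I-\wt M)$, and not some inequivalent Laurent polynomial, appears — together with the bookkeeping of the factor $r$, i.e.\ isolating the contribution of $H_0(\wt X)$ and thereby the distinction of the $b_1(\Gamma)=1$ case, which reproduces the classical ``$\Delta/(\dt-1)$'' correction. The torsion-theoretic ingredients (Milnor's mapping-torus formula and the multiplicativity of torsion over homology) are standard, but reconciling their conventions with the paper's normalizations of $\Delta_\Gamma$ and of $\wt M,\wt P$ will require care.
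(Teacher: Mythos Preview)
Your approach via Reidemeister torsion is sound and genuinely different from the paper's. The paper never invokes torsion or Milnor's mapping-torus formula; instead it works directly with a presentation matrix $A$ of $H_1(\wt X,\wt T)$ relative to a carefully chosen maximal tree $T$ (built from the vertical edges of $X$ plus a minimal set of horizontal connecting edges), and proves by an explicit inductive row-reduction argument (its \Cref{lem:dets}) that each maximal minor $\Delta_k$ of $A$ satisfies $\Delta_k\det(I-\posgen\wt P)\doteq\alpha(\wt\eta_k)\det(I-\posgen\wt M)$, where $\alpha$ is the connecting map to $H_0(\wt T)\cong\Z[H]$. Hironaka's minor-relation lemma then reduces the gcd of all the $\Delta_i$ to $\gcd\{\alpha(\wt e_i)\}$, which is $1$ or $(\dt-1)$ according to whether $b_1\ge 2$ or $b_1=1$.

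Your route is shorter and more conceptual, packaging the row operations into the multiplicativity of torsion and the algebraic mapping-torus formula; the cost is dependence on the identification $\Delta_\Gamma\doteq\operatorname{ord} H_1(\wt X)$, which---while true---deserves a careful justification or citation. It is the statement that for a finitely presented module of rank $1$ over a Noetherian UFD the first Fitting invariant equals the order of the torsion submodule, and since $I_H$ is not projective when $b_1\ge 2$, the short exact sequence you wrote does not split and one cannot simply block-diagonalize a presentation; one argues instead by localizing at height-$1$ primes to reduce to the PID case. The paper's hands-on approach, being self-contained, yields as a byproduct the precise structure $\mathrm{Fit}_1(H_1(\wt X,\wt T))=(\Delta_\Gamma)\cdot\mc A^p$ of the Alexander \emph{ideal} (\Cref{cor:ideal}), which is needed later for the mod-$2$ comparison with the McMullen polynomial; your torsion argument gives only the gcd. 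Your parenthetical alternative via a Schur-complement reduction on $\partial_1,\partial_2$ is in fact the closest in spirit to what the paper actually does.
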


\subsection*{Acknowledgements.}
We thank Anna Parlak for helpful comments that lead to \Cref{th:polynomials_mod2} and the referee for useful suggestions. SD was partially supported by NSF grants DMS-1711089 and DMS-2005368. RG and ST were partially supported by the Sloan Foundation, and ST was partially supported by NSF grant DMS-2102018.

\section{Orientable graph maps and stretch factors}\label{sec:orientable_maps}
Let $G$ be a finite graph with vertex set $V = V(G)$ and edge set $E = E(G)$.
A \define{graph map} $f \colon G \to G$ is a continuous map sending vertices to vertices and edges to nondegenerate edge paths.
 For careful definitions of standard notions like `graph' or `edge path,' we refer the reader to \cite[\S2]{DKL2}.
 
An \emph{orientation} on $G$ is a choice of a positive orientation for each edge of $G$. With such a choice, we say an edge path $\gamma$ is \emph{positive} if it crosses each edge with its positive orientation, and is \emph{negative} if it crosses each edge with its negative orientation. In general, if $\gamma\colon [0,1]\to G$ is an edge path, then $\overline{\gamma}$ will denote the same path with the reverse orientation.

\begin{definition}[Orientability]
\label{def:orientable}
A graph map is \define{positively orientable} (resp.\ {negatively orientable}) if there exists an orientation on $G$ such that every positive edge is mapped to a positive (resp.\ negative) edge path. 
We abbreviate these as pos/neg or $\pm1$--orientable.
The map is \define{orientable} if it is either pos-orientable or neg-orientable.
\end{definition} 

Note that if $f$ is neg-orientable, then $f^2$ is pos-orientable.
Since a consistently oriented edge path in $G$ cannot backtrack, we observe that an  orientable map is necessarily a train track map (see \S\ref{sec:tt_reps}).
See \S\ref{sec:examples} for examples.

Choosing an orientation on each edge gives a basis for the vector space $C_1(G; \R)$ of simplicial $1$--chains. Under this identification $C_1(G; \R)\cong  \R^E$, the induced map on simplicial $1$--chains is expressed as a matrix $M\colon  \R^E\to  \R^E$ with integer coefficients.
The real first homology of $G$ is the subspace $H_1(G; \R)\le  \R^E$ of $1$--cycles, and we denote the induced map by $f_* \colon H_1(G;  \R) \to H_1(G;  \R)$. 
The \emph{homological stretch factor} of $f$ is defined to be the spectral radius $\rho_f$ of $f_*$.
Note that this also equals the spectral radius of $M$; indeed
the eigenvalues of $M$ that are not eigenvalues of $f_*$ 
are eigenvalues of the action of $f$ on $C_0(G; \R)$, which are roots of unity.

For an arbitrary graph map $f \colon G \to G$, we will denote the transition matrix of $f$ by $A$, whose $(i,j)$ entry is the number of times the edge path $f(e_j)$ crosses the edge $e_i$ with either orientation. The \emph{(geometric) stretch factor}
$\lambda_f$ of $f$ is defined to be the spectral radius of $A$. Equivalently, since $A$ is non-negative, $\lambda_f$ is the eigenvalue of $A$ of largest modulus. 
Since the absolute values of the entries of $M$ are bounded by the entries of $A$,  we have $\rho_f \leq \lambda_f$ by Gelfand's formula for the spectral radius: for any square matrix $A$, the spectral radius of $A$ is equal to $\lim_{r \to \infty} \|A^r\|^{1/r}$. The following fact relating $M$ and $A$ follows from the definitions. 

\begin{fact} \label{fact:matrix}
If $f \colon G \to G$ is $\pm1$--orientable, then $M= \pm A$ after a choice of orientation.
\end{fact}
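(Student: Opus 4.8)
The plan is to unwind the definitions of $M$ and $A$ entry by entry, using the orientation that witnesses $\pm1$--orientability as the ``choice of orientation'' in the statement. Suppose first that $f$ is pos-orientable, and fix an orientation on $G$ so that every positively oriented edge maps to a positive edge path; list the positively oriented edges as $e_1,\dots,e_E$, which is precisely the basis of $C_1(G;\R)$ with respect to which $M$ is computed. By construction the $(i,j)$ entry of $M$ is the \emph{signed} count of how the edge path $f(e_j)$ traverses $e_i$, each positively oriented traversal contributing $+1$ and each negatively oriented traversal contributing $-1$, whereas $A_{ij}$ is the \emph{unsigned} count of all such traversals. Since $f(e_j)$ is a positive edge path, it crosses $e_i$ only with $e_i$'s positive orientation, so there is no cancellation and $M_{ij}=A_{ij}$; hence $M=A$.

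If instead $f$ is neg-orientable, I would run the identical argument with an orientation under which every positive edge maps to a negative edge path: then $f(e_j)$ traverses each $e_i$ only with its negative orientation, every traversal contributes $-1$, and $M_{ij}=-A_{ij}$, so $M=-A$. In both cases $M=\pm A$ after the chosen orientation, as claimed. There is essentially no obstacle here; the one point worth isolating is that a positive (resp.\ negative) edge path traverses any fixed edge only in a single direction --- which is immediate from the definition of these edge paths --- so that passing from the unsigned transition matrix $A$ to the signed incidence matrix $M$ introduces no sign cancellations within a single entry.
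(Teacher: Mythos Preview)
Your proof is correct and is exactly what the paper intends: it states only that the fact ``follows from the definitions,'' and your entry-by-entry unwinding of $M$ and $A$ using the witnessing orientation is precisely that verification.
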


Recall that a non-negative square matrix $A$ is \define{irreducible} if for each pair of indices $(i,j)$ there exists $k\ge 1$ such that the $(i,j)$ entry of $A^k$ is nonzero. The matrix is moreover \define{primitive} if there exists $k \ge 1$ such that $A^k$ is a positive matrix. Note that $A$ is primitive if and only if $A^k$ is irreducible for all $k\ge 1$. We extend this terminology by saying a graph map $f\colon G\to G$ is
\define{irreducible}/\define{primitive} if its transition matrix is irreducible/primitive. Relatedly, $f$ is  \define{expanding} if for each edge $e$ of $G$ the combinatorial lengths of the edge paths $f^k(e)$ tend to infinity as $k\to \infty$. By the Perron-Frobenius theorem (see \cite{Gantmacher-TheoryMatrices1and2}), if $f$ is a primitive graph map, then $\lambda_f>1$. 

The main goal of this section is to prove the following theorem from the introduction:
\begin{intro-orientable*}
Suppose that $f \colon G \to G$ is a primitive graph map. 
Then $\lambda_f = \rho_f$ if and only if $f$ is orientable. 
Moreover, $\lambda_f$ (resp.\ $-\lambda_f$) is an eigenvalue of $f_*$ if and only if $f$ is pos-orientable (resp.\ neg-orientable).
\end{intro-orientable*}

\subsection{The oriented edge-double}
Motivated by the use of double branched covers in \cite[Lemma 4.3]{BandBoyland},
we define the \emph{oriented edge double} $\wh G$ of $G$ as follows:  Let $p \colon \wh G \to G$ be a bijection on vertices and $2$-to-$1$ over edges such that
for each oriented edge $e$ of $G$, there are oriented edges $e_+$ and $e_-$ of $\wh G$ labeled so that $p(e_+) = e$ and $p(e_-) = \ol e$.
There is a natural continuous involution $\sigma\colon \wh G\to \wh G$ defined by $e_+\mapsto \ol e_-$ and $e_-\mapsto \ol e_+$. Notice that by construction $\sigma$ is neg-orientable and satisfies $p = p\circ \sigma$.

The key feature of $\wh G$ is that \emph{every} edge path in $G$ has a unique \emph{positive lift} to $\wh G$. That is, for any edge path $\gamma\colon [0,1]\to G$ there is a unique positive edge path $\wh \gamma\colon [0,1]\to \wh G$ such that $\gamma = p\circ \wh\gamma$. Similarly $\gamma$ has a unique \emph{negative lift}, which is simply the reverse of the unique positive lift of $\ol \gamma$ and is given by $\sigma\circ \wh\gamma$.
In particular, $e_+$ and $e_-$ are, respectively, the positive lifts of $e$ and $\ol e$.

Since $\sigma$ is an involution, the eigenvalues of 
$\sigma_*\colon H_1(\wh G ; \R)\to H_1(\wh G ; \R)$  are $\pm 1$ and we write $E_{1}$, $E_{-1}$ for the corresponding eigenspaces.

\begin{lemma}\label{lem:splitting}
We have $H_1(\wh G;  \R)= E_1\oplus E_{-1}$. Furthermore, $p_*\colon H_1(\wh G;  \R)\to H_1(G;  \R)$ restricts to an isomorphism $E_1\stackrel{\cong}{\to} H_1(G)$ and has kernel $\ker(p_*) = E_{-1}\cong C_1(G;\R)$.
\end{lemma}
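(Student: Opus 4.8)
The plan is to do everything at the level of simplicial chains, where the statement reduces to transparent bookkeeping with the orientation conventions that define $\wh G$. Fix an orientation on $G$; this picks out for each edge $e$ the labeled edges $e_+,e_-$ of $\wh G$, and $\{e_+,e_-: e\in E(G)\}$ is then a basis of $C_1(\wh G;\R)$, splitting it as a direct sum of the $2$--dimensional $\sigma_\#$--invariant subspaces $W_e = \R e_+ + \R e_-$. Reading off the defining relations $p(e_+)=e$, $p(e_-)=\ol e$, $\sigma(e_+)=\ol e_-$, $\sigma(e_-)=\ol e_+$, one records: on $1$--chains $p_\#(e_\pm)=\pm e$ and $\sigma_\#(e_+)=-e_-$, $\sigma_\#(e_-)=-e_+$; while on $0$--chains $p_\#\colon C_0(\wh G;\R)\to C_0(G;\R)$ is an isomorphism (it carries the vertex basis to the vertex basis) and $\sigma_\#$ is the identity (since $p=p\circ\sigma$ and $p$ is injective on vertices). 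The splitting $H_1(\wh G;\R)=E_1\oplus E_{-1}$ needs no work at all: $\sigma_*$ is an order--two automorphism of a real vector space, hence diagonalizable with eigenvalues in $\{\pm1\}$.

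The first real step is to identify $\ker(p_*)$ with $E_{-1}$. On $1$--chains, a block computation gives $\ker(p_\#)=\bigoplus_e \R(e_++e_-)$, and a one--line boundary computation — $e_+$ runs from $\hat v$ to $\hat w$ and $e_-$ from $\hat w$ to $\hat v$, where $\hat v,\hat w$ are the vertices over the endpoints of $e$ — shows $\partial(e_++e_-)=0$. Hence $\ker(p_\#|_{C_1})\subseteq Z_1(\wh G)=H_1(\wh G;\R)$. Since $e_++e_-$ spans precisely the $(-1)$--eigenline of $\sigma_\#$ inside each $W_e$, this subspace is exactly the $(-1)$--eigenspace of $\sigma_\#$ on $C_1(\wh G;\R)$; being already contained in $H_1$, it therefore equals both $E_{-1}$ and $\ker(p_*)=H_1(\wh G;\R)\cap\ker(p_\#|_{C_1})$. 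Finally $e\mapsto e_++e_-$ is a linear isomorphism $C_1(G;\R)\cong E_{-1}$.

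The second step handles $E_1$. Injectivity of $p_*|_{E_1}$ is immediate from $E_1\cap E_{-1}=0$ and $\ker(p_*)=E_{-1}$. For surjectivity I would exhibit an explicit lift: given a cycle $c=\sum_e a_e e\in Z_1(G)=H_1(G;\R)$, set $\wh c=\tfrac12\sum_e a_e(e_+-e_-)\in C_1(\wh G;\R)$. Then $\sigma_\#(\wh c)=\wh c$ and $p_\#(\wh c)=c$ by the formulas above, while $\partial\wh c=0$ because, under the isomorphism $C_0(\wh G;\R)\cong C_0(G;\R)$, the chain $\partial\wh c$ corresponds to $\partial c=0$; hence $\wh c\in E_1$ with $p_*(\wh c)=c$. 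Equivalently, and a bit more slickly, one checks that $e\mapsto\tfrac12(e_+-e_-)$ and $v\mapsto\hat v$ define a chain--level section $s$ of $p_\#$ satisfying $\sigma_\#\circ s=s$ and $s\circ p_\#=\tfrac12(1+\sigma_\#)$ on $C_*(\wh G;\R)$; passing to homology gives $p_*\circ s_*=\mathrm{id}$ and $s_*\circ p_*=\tfrac12(1+\sigma_*)$, the projection onto $E_1$, which yields both halves of the lemma at once.

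There is no serious obstacle here; the content is linear algebra plus careful sign--tracking. The two points to get right are: (i) the signs forced by the bars in $p(e_-)=\ol e$ and $\sigma(e_\pm)=\ol e_\mp$, which are exactly what make $e_++e_-$ a $\sigma$--anti-invariant cycle killed by $p$ and $e_+-e_-$ a $\sigma$--invariant lift of $2e$; and (ii) that $p$ is a bijection on vertices, so $p_\#$ is invertible on $C_0$ — this is what forces $\ker(p_\#|_{C_1})$ to consist of cycles and what transports boundary computations between $\wh G$ and $G$. This is the graph--theoretic counterpart of the eigenspace splitting for double branched covers used in \cite[Lemma 4.3]{BandBoyland}.
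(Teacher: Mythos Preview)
Your proof is correct and follows essentially the same approach as the paper: both arguments hinge on the explicit chain-level sections $e\mapsto \tfrac12(e_+\mp e_-)$, identify $E_{-1}$ with $\bigoplus_e \R(e_++e_-)\cong C_1(G;\R)$, and use $s_+$ as a right inverse to $p_*$ on $E_1$. The only organizational difference is that you first compute $\ker(p_\#|_{C_1})$ blockwise and then observe it lies in $Z_1(\wh G)$ via the vertex bijection, whereas the paper first shows $E_{-1}\subseteq\ker(p_*)$ abstractly from $p=p\circ\sigma$ and then deduces $\ker(p_*)\subseteq E_{-1}$ from injectivity of $p_*|_{E_1}$; these are equivalent bookkeeping choices.
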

\begin{proof}
For any class $c\in H_1(\wh G)$, we clearly have $c + \sigma_*(c)\in E_1$ and $c - \sigma_*(c)\in E_{-1}$. Since
\[c = \frac{c + \sigma_*(c)}{2} + \frac{c - \sigma_*(c)}{2},\]
the direct sum splitting $H_1(\wh G) = E_1\oplus E_{-1}$ follows. The containment $E_{-1}\le \ker(p_*)$ follows immediately from $p = p\circ \sigma$, since if $c\in E_{-1}$ then $p_*(c) = p_*(\sigma_*(c)) = p_*(-c) = -p_*(c)$. 

There are natural injective maps $s_\pm\colon C_1(G ; \R)\to C_1(\wh G ; \R)$ on $1$--chains defined on each basis edge $e\in C_1(G;  \R)$ by $s_\pm(e) = \frac{1}{2}(e_+ \mp e_-) = \frac{1}{2}(e_+ \pm \sigma_*(e_+))$. The image of $s_+$ is contained  in the fixed set of $\sigma_*$, that is $\sigma_*\circ s_+ = s_+$. 
Since $s$ sends cycles to cycles, it therefore restricts to a map $s_+\colon H_1(G; \R)\to E_1 \le H_1(\wh G ;  \R)$.
Further, we see that for any $c\in C_1(G ;  \R)$ we have $p_*(s_+(c))= c$.
Similarly the image of $s_-$ is clearly contained in $E_{-1}\le H_1(\wh G)$.
Now  any $\wh c_\pm \in E_{\pm1}$ necessarily has the form $\wh c_\pm = \sum_{e \in G} a_e (e_+ \mp e_-) = \sum_{e\in G} 2 a_e s_\pm(e)$. In particular $s_-$ is surjective onto $E_{-1}$, giving an isomorphism $C_1(G;\R)\to E_{-1}$. 
We also see that $p_* (\wh c_+ ) = \sum_{e \in G} 2a_e e$ and hence $s_+(p_*(\wh c_+)) = \sum_{e \in G} 2a_e s_+(e) = \wh c_+$.
It follows that $p_*$ restricts to an isomorphism on $E_1\to H_1(G;\R)$ and that additionally $\ker(p_*) \le E_{-1}$.
\end{proof}

Any graph map $f\colon G\to G$ has a unique pos-orientable lift $\wh f\colon \wh G \to \wh G$, preserving the specified orientation of $\wh G$, satisfying $p\circ \wh f = f \circ p$. This sends $e_+$ to the unique positive lift of $f(e)$, and sends $e_-$ to the unique positive lift of $f(\ol e)$. More generally, for any positive edge path $\gamma$ in $\wh G$ we have 
\[\wh f\circ \gamma = \widehat{f \circ p\circ \gamma}.\]
We denote the transition matrix of $\wh f$ by $\wh A$ and note that $\wh A$ is also the induced map on simplicial $1$-chains (see \Cref{fact:matrix}).
Notice that for any positive edge $e$ of $\wh G$, the paths $\sigma(\wh f(e))$ and $\wh f(\sigma(e))$ are both negative lifts of $f(p(e))$. As such lifts are unique, we conclude that $\wh f$ and $\sigma$ commute:
\[\wh f \circ \sigma = \sigma \circ \wh f.\]
Consequently, $\wh f_*$ necessarily preserves the splitting $H_1(\wh G) = E_{1}\oplus E_{-1}$ from \Cref{lem:splitting}; in fact $\wh f_* = f_*\oplus A$ with the transition matrix $A$ describing the action on $C_1(G;\R)\cong E_{-1}$.

\begin{lemma} \label{lem:prim_lift}
Let $f \colon G \to G$ be a primitive graph map and let $\wh f\colon \wh G \to \wh G$ be its orientation lift. Then $\lambda_{\wh f} =\lambda_f$ and $\wh f$ is primitive if and only if $f$ is not orientable.
\end{lemma}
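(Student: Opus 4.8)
The plan is to analyze the transition matrix $\wh A$ of the pos-orientable lift $\wh f$ in terms of the transition matrix $A$ of $f$. First I would record the basic relationship between $\wh A$ and $A$: since every edge $e$ of $\wh G$ is either a positive lift $e_+$ or a negative lift $e_-$ of an edge of $G$, and since $\wh f(e_+)$ is the positive lift of $f(p(e_+))$ while $\wh f(e_-)$ is the positive lift of $f(\ol{p(e_-)})$, one can write $\wh A$ as a $2\times 2$ block matrix indexed by the $\pm$ partition of $E(\wh G)$. Each crossing of $\wh f(e_+)$ over an edge $e'_\pm$ corresponds to a crossing of $f(e)$ over $e'$ with a definite orientation, and these orientations are recorded by the sign matrix $M$ (the matrix of $f$ on simplicial $1$-chains, as in \Cref{fact:matrix}): if $M = M^+ - M^-$ with $M^+,M^-\ge 0$ the positive and negative parts, then $A = M^+ + M^-$, and $\wh A = \begin{psmallmatrix} M^+ & M^- \\ M^- & M^+\end{psmallmatrix}$. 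Conjugating by $\begin{psmallmatrix} I & I \\ I & -I \end{psmallmatrix}$ (which is exactly the change of basis from \Cref{lem:splitting}) block-diagonalizes $\wh A$ into $(M^+ + M^-)\oplus(M^+ - M^-) = A \oplus M$, consistent with the already-noted decomposition $\wh f_* = f_* \oplus A$. In particular the eigenvalues of $\wh A$ are those of $A$ together with those of $M$, and since $|M_{ij}| \le A_{ij}$ we get $\rho(M)\le \lambda_f$, hence $\lambda_{\wh f} = \lambda_f$.

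Next I would prove the primitivity dichotomy. Since $A$ is primitive, $A^k$ is positive for all large $k$; I claim $\wh A^k = \begin{psmallmatrix}(M^+M^- \text{-type sum}) \end{psmallmatrix}$ has the block form $\begin{psmallmatrix} B_k & C_k \\ C_k & B_k\end{psmallmatrix}$ with $B_k + C_k = A^k > 0$ and $B_k - C_k = M^k$, where $B_k, C_k \ge 0$. Thus $\wh A^k$ is positive if and only if both $B_k$ and $C_k$ are strictly positive, i.e. if and only if $M^k$ has no entry of absolute value equal to the corresponding (positive) entry of $A^k$ — equivalently, no entry of $M^k$ ``achieves full cancellation-free magnitude.'' So $\wh f$ is primitive iff for every $(i,j)$ there is some $k$ with $0 < |M^k_{ij}| < A^k_{ij}$ holding simultaneously for all entries... more carefully: $\wh f$ primitive $\iff$ there exists $k$ with $B_k>0$ and $C_k>0$ entrywise. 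The content is then to show this fails exactly when $f$ is orientable. If $f$ is pos-orientable, then after reorienting $M = A$, so $M^- = 0$, giving $C_k = 0$ for all $k$ and $\wh f$ is manifestly not primitive (indeed $\wh G$ splits, $\wh f$ preserving each copy). If $f$ is neg-orientable, then $M = -A$, $M^+ = 0$, $B_k = 0$ for all $k$, again not primitive. Conversely, if $\wh f$ is not primitive, I need to conclude $f$ is orientable.

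For that converse — which I expect to be the main obstacle — I would argue via the Perron–Frobenius structure. Since $\wh A$ is a nonnegative integer matrix with $\wh A^k$ having row/column sums comparable to those of the primitive matrix $A^k$, $\wh A$ is at least not nilpotent and has spectral radius $\lambda_f > 1$. If $\wh f$ is not primitive, then $\wh A$ is a nonnegative non-primitive matrix, so it is either reducible or irreducible-imprimitive (period $\ge 2$). In either case the ``positive'' eigenvalue $\lambda_f$ of $\wh A$ has a nonnegative eigenvector whose support is not all of $E(\wh G)$, or $-\lambda_f$ is also an eigenvalue; using $\wh f_* = f_* \oplus A$ and the fact that $A$ (being primitive) contributes the simple Perron eigenvalue $\lambda_f$ with a strictly positive eigenvector, one sees the ``extra'' structure must come from the $M = f_*$-summand. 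Concretely, non-primitivity of $\wh A$ forces $\rho(M) = \lambda_f$, hence $\lambda_f$ or $-\lambda_f$ is an eigenvalue of $M$ (equivalently of $f_*$); by the other half of \Cref{thm:orientable} — or rather by redoing its argument, since this lemma is a step toward it — one shows a nonnegative or alternating eigenvector of $M$ for eigenvalue $\pm\lambda_f$ yields a consistent orientation of $G$ making $f$ pos- or neg-orientable. The cleanest route is: $\wh A$ non-primitive $\Rightarrow$ $\wh A$ has an eigenvalue of modulus $\lambda_f$ other than $\lambda_f$ itself (counted with the $A$-summand's copy), and since the $A$-summand contributes a unique such, $M$ must contribute one too; a Perron–Frobenius comparison argument (the entries of $M^k$ have the same magnitude as those of $A^k$ in some recurring pattern) then shows all the sign data of $M$ is governed by a single $\{\pm1\}$-cocycle on the edges, which is precisely a compatible orientation. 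I would carry out this last comparison by considering, for a fixed long word, the ratio $|M^k_{ij}|/A^k_{ij}$ and showing it is $1$ for all $i,j,k$ once it is $1$ ``often enough,'' forcing every edge-path in the support to be monochromatic — this rigidity, coming from primitivity of $A$, is the heart of the matter.
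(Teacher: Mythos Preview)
Your block-matrix approach is genuinely different from the paper's and is viable, but there are two issues to flag.

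First, a minor but real slip: defining $M^\pm$ as the positive and negative parts of $M$ and then asserting $A = M^+ + M^-$ is wrong whenever $f(e)$ crosses some edge $e'$ in both directions (so that $|M_{e'e}| < A_{e'e}$). What you actually want are the \emph{signed crossing-count} matrices $P$ and $N$ (number of positive resp.\ negative crossings), which satisfy $P+N = A$ and $P-N = M$ by definition; the block form $\wh A = \begin{psmallmatrix} P & N \\ N & P\end{psmallmatrix}$ and the diagonalization $\wh A \sim A\oplus M$ then go through verbatim, and $\lambda_{\wh f} = \lambda_f$ follows.

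Second, and more seriously, your converse (``$\wh f$ not primitive $\Rightarrow f$ orientable'') is too vague as written. You assert that non-primitivity of $\wh A$ forces $\rho(M)=\lambda_f$, but your justification only covers the irreducible-imprimitive case; in the reducible case you need an additional argument. One clean way: non-primitivity means $\wh A^k$ has a zero entry for every $k$, and from $\wh A^k = \begin{psmallmatrix} (A^k+M^k)/2 & (A^k-M^k)/2 \\ (A^k-M^k)/2 & (A^k+M^k)/2\end{psmallmatrix}$ this gives, for each $k$, some entry with $|(M^k)_{ij}| = (A^k)_{ij}$; since $A$ is primitive these entries grow like $\lambda_f^k$, forcing $\rho(M)\ge\lambda_f$. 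Then the step ``$\rho(M)=\lambda_f \Rightarrow f$ orientable'' is exactly Wielandt's lemma on dominated matrices ($|M|\le A$ with $A$ irreducible and $\rho(M)=\rho(A)$ forces $M = \pm DAD^{-1}$ for a diagonal $\pm 1$ matrix $D$), which you gesture at but should name and invoke explicitly; the diagonal $D$ is then your reorientation. You must not appeal to \Cref{thm:orientable} here, since this lemma is an input to its proof.

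By contrast, the paper's proof is entirely combinatorial: it observes that the directed graph $D(\wh A)$ is a $2$-fold cover of $D(A)$, shows by path-lifting that $D(\wh A)$ is either strongly connected or splits into two copies of $D(A)$ (the latter giving an invariant orientation), and in the strongly connected case uses the gcd-of-cycle-lengths characterization of primitivity to conclude that imprimitivity forces period $2$, hence neg-orientability. This avoids Wielandt entirely and is more self-contained; your spectral route is slicker once Wielandt is on the table, and has the pleasant feature of making the equality $\lambda_{\wh f}=\lambda_f$ and the orientability dichotomy visible in one similarity $\wh A \sim A\oplus M$.
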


\begin{proof}
As noted previously, for any square matrix $B$ with spectral radius $\mu$, we have that  $\mu = \lim_{r \to \infty} \Vert B^r \Vert^{1/r}$ where $\Vert \cdot \Vert$ is any matrix norm. 
Let us take $\Vert \cdot \Vert$ to be the maximum absolute column sum (i.e.~the $\ell^1$ operator norm).
Now observe that the corresponding column sums of (all powers of) $\wh A$ and $A$ are equal. Indeed, this follows from the definitions of $\wh A, A$ and the fact that for each edge $e$ of $G$, $p \circ \wh f^n(e_+) = f^n(e)$ and $p \circ \wh f^n(e_-) = f^n(\ol e)$. Therefore $\Vert{\wh A^r}\Vert = \Vert{A^r}\Vert$ for all $r\ge 1$ and we conclude $\lambda_{\wh f} =\lambda_f$.

Let $D(A)$ and  $D(\wh A)$ be the directed graphs such that their adjacency matrices are $A$ and $\wh A$, respectively. The vertices of $D(A)$ and $D(\wh A)$ are labeled by the edges of $G$ and $\wh G$, respectively. 
There is a natural simplicial map $D(\wh A) \to D(A)$ that is $2$-to-$1$ over vertices (induced by $\wh G \to G$) and also $2$-to-$1$ over edges. It is an isomorphism on the link of each vertex, so it is a (possibly disconnected)  $2$-fold covering. 
It follows that $f$ is orientable if and only if $D(\wh A)$ is disconnected, and each component of $D(\wh A)$ gives an $f$-invariant orientation of $G$.

Since $A$ is irreducible, $D(A)$ is strongly connected, 
 i.e.~for any two vertices, there is a directed path from one to the other.
We now understand the connectivity of $D(\wh A)$. By path lifting, any directed path from $e$ to $e'$ in $D(A)$, lifts to a directed path in $D(\wh A)$ from $e_+$ to either $e'_+$ or $e'_-$ (and similarly for $e_-$). Moreover, if there is a directed path from $e_+$ to $e'_\pm$, then there is a corresponding directed path from $e_-$ to $e'_\mp$. This can be obtained by applying the deck transformation of $D(\wh A) \to D(A)$.
Hence, there are directed loops starting at each vertex of $D(\wh A)$, and if there is a directed path from $v$ to $w$ then there is a directed path form $w$ to $v$. For example, suppose that there is a directed path from $e_+$ to $e'_+$, but no directed path from $e'_+$ to $e_+$. Since $D(A)$ is strongly connected, there is a directed path from $e'$ to $e$ and hence there must be  a directed path from $e'_+$ to $e_-$.  This implies there is a directed path from $e'_-$ to $e_+$. Also there is a directed path from $e_- $ to $e'_-$. Thus, we have constructed a directed path from $e'_+$ to $e_-$ to $e'_-$ to $e_+$, giving a contradiction.

The above discussion implies that  $D(\wh A)$ is either strongly connected or it has two 
components each of which is isomorphic to $D(A)$. In the later case we are done since $f$ is orientable. So we henceforth assume that $D(\wh A)$ is strongly connected, or equivalently that $\wh A$ is irreducible. 

Since $A$ is primitive
the gcd of the lengths of directed cycles of $D(A)$ is $1$  \cite[Theorem 3.4.4]{BrualdiRyser}. Since $D(\wh A) \to D(A)$ is a $2$-fold covering, the gcd of lengths of directed cycles of $D(\wh A)$ is at most $2$. If it is $1$, then $\wh A$ is also primitive and we are done. Otherwise the gcd is $2$ so that $\wh A^2$ is reducible and $D(\wh A^2)$ has two connected components.  Therefore, we must have that $f^2$ is pos-orientable by the argument above.

We will now show that $f$ is in fact neg-orientable. Notice that if $\wh A$ is not primitive, then every odd length cycle of $D(A)$ fails to lift to $D(\wh A)$. 
This means the edge path $f^k(e)$ can map over $e$ preserving orientation only when $k$ is even. Therefore $D(\hat{A})$ is bipartite with each directed edge going from one recurrent component of $D(\hat{A}^2)$ to the other.
Each of these recurrent components orients the graph map $f^2$ and with this orientation of $G$, $f$ maps each edge over a negatively oriented edge path. Hence, $f$ is neg-orientable.
\end{proof}

\subsection{Proof of \Cref{th:stretch_or}}
We now give the proof of \Cref{th:stretch_or} using the oriented edge-double construction.

\begin{proof} [Proof of \Cref{th:stretch_or}]
Let $f \colon G\to G$ be a primitive graph map and first
suppose $f$ is orientable. Recall $A$ is the primitive transition matrix of $f$ with spectral radius $\lambda:=\lambda_f>1$ and so there is an eigenvector $v\in \R^E$ such that $Av = \lambda v$. 
\Cref{fact:matrix} then implies that $v$ represents a $1$-chain (using our identification $C_1(G; \R)\cong  \R^E$) such that $M v = \pm \lambda v$, where the negative sign occurs exactly in the neg-orientable case.
In fact, $v \in H_1(G; \R) \le C_1(G;\R)$ since otherwise $\partial\colon C_1(G; \R) \to C_0(G; \R)$ maps $v$ to an eigenvector whose eigenvalue has absolute value $\lambda>1$, contradicting that $f$ acts on $C_0(G; \R)$ as a permutation.
This shows that $f_*(v) = \pm \lambda v$.
Since it is always the case that $\rho_f \le \lambda_f$, we conclude that 
$\lambda_f = \rho_f$ when $f$ is orientable. 

Next suppose that $f$ is not orientable. Then by \Cref{lem:prim_lift}, $\wh f \colon \wh G \to \wh G$ is orientable and has primitive transition matrix $\wh A$. Hence by the first paragraph above (and  \Cref{lem:prim_lift}) we have $\lambda := \lambda_f = \lambda_{\wh f} = \rho_{\wh f}$. By the Perron-Frobenius theorem, $\lambda$ is a simple eigenvalue of $\wh A$, i.e.~it has algebraic multiplicity $1$, and
 every other eigenvalue $\mu$ of $\wh A\colon H_1(\wh G)\to H_1(\wh G)$ has $\abs{\mu} < \lambda$. Moreover, there is a $\lambda$--eigenvector $\wh v\in H_1(\wh G)$ that lies in the positive cone of $C_1(\wh G)$ spanned by the 1-chains $e_+$ and $e_-$ for all $e \in EG$. We claim that $\wh v\in E_{-1}$, where recall $H_1(\wh G) = E_1 \oplus E_{-1}$. Indeed, since $\sigma$ commutes with $\wh f$, $\sigma_*(\wh v)$ is also a $\lambda$--eigenvector. Since this eigenspace is one dimensional $\sigma_*(\wh v) = \pm \wh v$. Now $\sigma_*$ sends the positive cone to the negative cone, therefore it must be that $\wh v\in E_{-1} = \ker(p_*)$ as claimed. From this, we see that any eigenvector of $\wh A$ in $E_1$ corresponding to an eigenvalue $\mu$ has  $\vert \mu \vert < \lambda$. By \Cref{lem:splitting}, $p_* \colon E_1 \to H_1(G)$ is a linear isomorphism that conjugates the action of $\wh f_*$ on $E_1$ to the action of $f_*$ on $H_1(G)$. We conclude that each eigenvalue $\mu$ of $f_*$ has $\vert \mu \vert < \lambda$. Hence, $\rho_f< \lambda_f$ and the proof is complete.

The moreover statement now follows from \Cref{fact:matrix}, since if $f$ is $\pm1$--orientable then $M = \pm A$ and so  $\pm \lambda_f$ is evidently a zero of the polynomial $\det(uI - f_*)$ in the variable $u$.
\end{proof}

\section{Orientability for free group endomorphisms}
\label{sec:endo}

In this section we introduce a notion of orientability for fully irreducible free group endomorphisms and characterize this property in terms of stretch factors,  train track representatives, and expanding laminations.

\subsection{Stretch factors for endomorphisms}
Let $\free$ be a finite-rank free group and $\varphi\colon \free\to \free$ any  endomorphism. The \define{(geometric) stretch factor} of $\varphi$ is defined as
\[
\lambda_\varphi = \sup_{w\in \free} \liminf_{n\to \infty} \sqrt[n]{\norm{\varphi^n(w)}},
\]
where the supremum is over all elements of $\free$ and $\norm{\cdot}$ is the length of the conjugacy class with respect to some (or any) fixed generating set of $\free$. The \define{homological stretch factor} of $\varphi$ is defined as the spectral radius for the action of $\varphi_*$ on $H_1(\free;\R)$ and is calculated as
\[
\rho_\varphi = \sup_{v\in H_1(\free,\R)} \liminf_{n\to \infty} \sqrt[n]{\norm{\varphi_*^n(v)}},
\]
where $\norm{\cdot}$ denotes some (or any) fixed norm on the vector space $H_1(\free;\R)$. 

Since the conjugacy length on $\free$ and  norm on $H_1(\free;\R)$ can be chosen such that $\norm{w^{\text{ab}}}\le \norm{w}$ for any element $w\in \free$ (where $w^{\text{ab}}$ denotes the image in the abelianization $H_1(\free;\Z)$ of $\free$), it is immediate that $\rho_\varphi\le \lambda_\varphi$ for any endomorphism. 

\subsection{Dual splittings}
\label{sec:dual_splittings}
Recall from \S\ref{sec:intro_BNS} that any endomorphism $\phi\colon B\to B$ of a group determines a generalized HNN extension $B \ast_\phi$ along with a dual class given by the projection $B \ast_\phi \to \Z$. We say that two such HNN extensions are \define{equivalent} if there is an isomorphism $B_1 \ast_{\phi_1}\cong B_2 \ast_{\phi_2}$ that respects the projections to $\Z$. 
Note that endomorphisms of distinct groups $B_1,B_2$ may determine equivalent HNN extensions and that, in principle, such endomorphisms $\phi_1,\phi_2$ may have distinct stretch factors. However, in the case of finite-rank free groups, we have the following relationships between equivalent splittings:

\begin{lemma} \label{lem:stretchdef}
Let $\varphi\colon \free\to \free$ be an endomorphism of a finite-rank free group.
\begin{itemize}
\item There exists an injective endomorphism $\phi\colon B\to B$ of a free group with $\mathrm{rank}(B)\le \mathrm{rank}(\free)$ such that $\free\ast_\varphi$ and $B\ast_\phi$ are equivalent.
\item If $\free\ast_\varphi$ is equivalent to $B \ast_\phi$ for some endomorphism $\phi\colon B\to B$ of a finite-rank free group, then $\lambda_\varphi = \lambda_\phi$ and $\rho_\varphi = \rho_\phi$. In fact, the characteristic polynomials of $\varphi$ and $\phi$ acting on homology agree up to multiplication by $t^k$.
\end{itemize}
\end{lemma}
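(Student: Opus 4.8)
The plan is to establish the two bullet points in sequence. For the first bullet, the idea is that any endomorphism $\varphi\colon \free\to\free$ factors through its image: writing $\free' = \varphi(\free)$, which is a free group of rank at most $\mathrm{rank}(\free)$, the map $\varphi$ restricts to an injective endomorphism $\phi\colon \free'\to\free'$ (injective because $\varphi|_{\free'}$ is the restriction of the surjection $\varphi\colon \free\to\free'$... wait — one must be slightly careful here). More precisely, I would argue as follows: since $\free'=\varphi(\free)$ is a retract-free situation in general, instead take $B$ to be the eventual image $\bigcap_n \varphi^n(\free)$, which stabilizes since ranks are non-increasing; then $\varphi$ restricts to an injective (in fact surjective onto $B$, hence an automorphism of $B$, but at minimum injective) endomorphism $\phi=\varphi|_B$ with $\mathrm{rank}(B)\le\mathrm{rank}(\free)$. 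The content is then to check that $\free\ast_\varphi \cong B\ast_\phi$ compatibly with the maps to $\Z$. This is a standard fact about ascending HNN extensions (``stable letter'' arguments): in $\free\ast_\varphi = \langle \free, t\mid t\inv w t = \varphi(w)\rangle$, every element of $\free$ is conjugate by a power of $t$ into $B$ — indeed $t^{-n}\free t^n = \varphi^n(\free)\supseteq B$ — and one shows the inclusion $B\ast_\phi \hookrightarrow \free\ast_\varphi$ (sending stable letter to $t$) is surjective, using $\free = \bigcup_n t^n B t^{-n}$ together with $t^{-n}\free t^n = \varphi^n(\free) = B$ for $n$ large. I expect this bookkeeping to be routine but slightly fiddly, and this is where I would be most careful.

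For the second bullet, suppose $\Psi\colon \free\ast_\varphi \xrightarrow{\;\cong\;} B\ast_\phi$ respects the projections to $\Z$. The key point is that \emph{all} of $\lambda_\varphi$, $\rho_\varphi$, and the relevant characteristic polynomials are invariants of the pair (group, dual class) and do not depend on the chosen splitting. For the geometric stretch factor: by \cite{DKL-endos} (or directly), $\lambda_\varphi$ equals the geometric stretch factor $\lambda(u)$ of the dual class $u$ of the splitting, which depends only on the isomorphism type of the pair $(\Gamma, u)$; since $\Psi$ carries the dual class of $\free\ast_\varphi$ to that of $B\ast_\phi$, we get $\lambda_\varphi=\lambda_\phi$. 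Alternatively, and more self-containedly, one can note that $\lambda_\varphi$ is the exponential growth rate of word length in the kernel $K=\ker u$ under the action of $t$-conjugation, intrinsic to $(\Gamma,u)$. The same applies to $\rho_\varphi$ after abelianizing.

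The cleanest way to handle the homological statement — and the sharp ``up to $t^k$'' claim — is to compute the Alexander-type module of $\Gamma=\free\ast_\varphi$ over $\Z[\Z]=\Z[t^{\pm1}]$ associated to $u$. From the presentation $\free\ast_\varphi$ one reads off that $H_1$ of the $\Z$-cover (i.e.\ $H_1(K;\Z)$ as a $\Z[t^{\pm1}]$-module, or rather $H_1(\Gamma;\Z[t^{\pm1}])$) has presentation matrix $tI - \varphi_*$, where $\varphi_*$ is the matrix of $\varphi$ on $H_1(\free;\Z)$; the order ideal of this module is generated by $\det(tI-\varphi_*)$, the characteristic polynomial of $\varphi_*$. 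Since this module depends only on $(\Gamma,u)$, the two characteristic polynomials $\det(tI-\varphi_*)$ and $\det(tI-\phi_*)$ generate the same ideal of $\Z[t^{\pm1}]$, hence agree up to a unit $\pm t^k$; comparing leading coefficients (both monic of the same sign once we fix conventions, or noting that $\rho_\varphi=\rho_\phi$ forces matching top degrees) pins down the ambiguity to $t^k$. In particular $\rho_\varphi=\rho_\phi$ follows again as the largest modulus of a root. The main obstacle, as noted, is the careful verification of the HNN-extension isomorphism in the first bullet; everything in the second bullet is then a formal consequence of the invariance of these homological and dynamical quantities under the equivalence $\Psi$.
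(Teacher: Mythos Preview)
Your proposal is essentially correct, and for the first bullet and the geometric stretch factor it matches the paper's approach (the paper simply cites \cite[Corollary~2.8 and Proposition~2.9]{DKL2} for these). The one genuine difference is in the homological claim: the paper proves $\rho_\varphi=\rho_\phi$ and the agreement of characteristic polynomials by a \emph{forward reference} to its own determinant formula for $\Delta_\Gamma$ (\Cref{th:det_form}) and its consequence \Cref{lem:spec_char_poly}, which relates $\det(t^{-1}I - f_*)$ to the specialization $\Delta_\Gamma^u(t)$. You instead give a direct module-theoretic argument via the Milnor/Wang sequence for the infinite cyclic cover, obtaining $H_1(\wt X)\cong \mathrm{coker}(I - t\varphi_*)$ as a $\Z[t^{\pm1}]$--module with $0$th Fitting invariant $\det(I - t\varphi_*)$. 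Your route is more elementary and self-contained (it avoids the machinery of \S\ref{sec:alex_poly_of_free-by-cyclic}), while the paper's route is what actually motivates and uses \Cref{th:det_form}; both yield the same conclusion. One small correction: the presentation matrix you get is $I - t\varphi_*$ rather than $tI - \varphi_*$, but since $\det(I - t\varphi_*)$ has constant term $1$, two such polynomials that agree up to a unit in $\Z[t^{\pm1}]$ are in fact equal, and this pins down the characteristic polynomials up to exactly a power of $t$ as claimed.
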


\begin{proof}
The first facts are explained in \S2.4 of \cite{DKL2}. Specifically, Corollary 2.8 of that paper provides an injective endomorphism $\phi\colon B\to B$ as in the first item and Proposition 2.9 implies the invariance of geometric stretch factors in the second. The fact that $\rho_\varphi=\rho_\phi$ with equal characteristic polynomials up to a factor is a consequence of our determinant formula for computing the Alexander polynomial, which is an invariant of the group $\free\ast_\varphi$; see \Cref{lem:spec_char_poly} and \Cref{rmk:char_pol} below. 
\end{proof}

\subsection{Train track representatives}
\label{sec:tt_reps}
A \define{topological representative} of an endomorphism $\varphi\colon\free\to \free$ is a graph map $f\colon G\to G$ for which there is a choice of basepoint $v\in G$, path $\beta$ from $f(v)$ to $v$, and isomorphism $\free \cong \pi_1(G,v)$ that conjugates $\varphi$ to the endomorphism $\gamma\mapsto \bar{\beta} f(\gamma)\beta$ of $\pi_1(G,v)$. 
It follows easily from the definitions that if $f \colon G \to G$ represents $\varphi$, then $\lambda_\varphi\le \lambda_f$ 
 and $\rho_\varphi = \rho_f$. 

In part to obtain more precise control on geometric stretch factors, Bestvina and Handel \cite{BestvinaHandel-TTs} introduced the notion of a \define{train track map}, 
which is a graph map $f\colon G\to G$ such that for every edge $e$ and power $k\ge 1$ the edge path $f^k(e)$ does not backtrack.
The following connection to geometric stretch factors is well-known and is originally due to Bestvina--Handel in the case of automorphisms.
\begin{lemma}[{\cite[Proposition 2.2]{DKL2}}]
If $f$ is an irreducible train track representative of a free group endomorphism $\varphi$, then $\lambda_\varphi = \lambda_f$.
\end{lemma}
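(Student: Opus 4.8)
The plan is to prove the two inequalities $\lambda_\varphi\le\lambda_f$ and $\lambda_\varphi\ge\lambda_f$ separately. The inequality $\lambda_\varphi\le\lambda_f$ holds for any topological representative and was already recorded above, so the real content lies in the reverse inequality. Since $\lambda_\varphi$ is a supremum over $w\in\free$, it suffices to produce a single nontrivial conjugacy class $w$ with $\liminf_{n\to\infty}\sqrt[n]{\norm{\varphi^n(w)}}\ge\lambda_f$. We may assume $\lambda_f>1$, since otherwise the transition matrix $A$, being a nonnegative irreducible integer matrix of spectral radius $1$, is a permutation matrix, and then $f$ permutes edges and $1\le\lambda_\varphi\le\lambda_f=1$.

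First I would isolate two structural inputs. \emph{(a) Edge growth.} Because $f$ is a train track map, the edge path $f^n(e)$ is reduced for every edge $e$ and every $n$, so its combinatorial length equals the corresponding column sum of $A^n$. Since $A$ is irreducible, the Perron--Frobenius theorem supplies a strictly positive left $\lambda_f$--eigenvector $u$, and comparing $u$ entrywise with the all-ones vector gives, for each edge $e$, a constant $c_e>0$ with $\norm{f^n(e)}\ge c_e\,\lambda_f^{\,n}$ for all $n$. \emph{(b) Legality is preserved.} A train track map carries legal turns to legal turns, hence legal edge paths to legal edge paths; in particular, if $\gamma$ is a \emph{legal loop} in $G$, then $f^n(\gamma)$ is again a legal loop, hence cyclically reduced, so its cyclically reduced combinatorial length equals $\sum_e n_e(\gamma)\,\norm{f^n(e)}$, where $n_e(\gamma)$ counts the crossings of $e$ by $\gamma$.

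Next I would produce a legal loop. The derivative map $Df$ acts on the finite set of directions of $G$, so some direction---the germ of an edge $e_0$---is periodic of period $p$ under $Df$, meaning $f^p(e_0)$ begins with $e_0$. The paths $f^{pk}(e_0)$ then form an increasing nested sequence of legal edge paths of unbounded length (here $A$ irreducible and $\lambda_f>1$ are used), and their union is a legal ray $R$ in $G$. Since $R$ traverses infinitely many directed edges while $G$ has only finitely many, some directed edge is traversed twice in $R$, and the subpath of $R$ between two such occurrences is a legal loop $\gamma_0$---the turn closing it up already occurs inside the legal ray $R$, hence is legal. (Alternatively one can cite the standard existence of legal loops for irreducible train track maps.) Let $w$ be the conjugacy class in $\free\cong\pi_1(G)$ represented by $\gamma_0$, and fix an edge $e_1$ crossed by $\gamma_0$. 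Under the identification of $\varphi$ with $\gamma\mapsto\bar\beta f(\gamma)\beta$, the conjugacy class of $\varphi^n(w)$ is the free homotopy class of the loop $f^n(\gamma_0)$: conjugating a based loop by the iterated basepoint-correction path does not change its free homotopy class. By (b), the cyclically reduced length of $f^n(\gamma_0)$ is $\sum_e n_e(\gamma_0)\,\norm{f^n(e)}\ge\norm{f^n(e_1)}$, which by (a) is at least $c_{e_1}\lambda_f^{\,n}$. Since the conjugacy length $\norm{\cdot}$ on $\free$ agrees with cyclically reduced combinatorial length of loops in $G$ up to bounded multiplicative and additive error, we get $\norm{\varphi^n(w)}\ge c'\lambda_f^{\,n}$ for all large $n$, hence $\liminf_n\sqrt[n]{\norm{\varphi^n(w)}}\ge\lambda_f$, and therefore $\lambda_\varphi\ge\lambda_f$. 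Together with the easy direction this yields $\lambda_\varphi=\lambda_f$.

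The step I expect to be the main obstacle is this passage from edge paths to a genuine conjugacy class: the transition matrix only governs the lengths of the non-closed paths $f^n(e)$, and one must upgrade this to control of $\norm{\varphi^n(w)}$ for an actual $w\in\free$ while ruling out that cancellation destroys the growth---which is exactly what the legal-loop construction together with the train track hypothesis accomplish. A secondary, setting-specific subtlety is that, for an endomorphism, $f$ need not be a homotopy equivalence and carries a nontrivial basepoint correction $\beta$; passing to free homotopy classes of loops (equivalently, conjugacy classes) neutralizes this, exactly as in the automorphism case.
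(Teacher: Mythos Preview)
The paper does not give its own proof of this lemma; it simply cites \cite[Proposition 2.2]{DKL2}. Your argument is correct and is essentially the standard proof: bound $\lambda_\varphi\le\lambda_f$ using that $A$ dominates growth of edge paths, then exhibit a legal loop whose iterates realize the growth rate $\lambda_f$ to get the reverse inequality. The construction of the legal loop via a periodic direction and a repeated directed edge in the resulting legal ray is exactly the usual one, and your handling of the degenerate case $\lambda_f=1$ (forcing $A$ to be a permutation matrix) and of the basepoint correction for endomorphisms is fine. So your write-up would serve as a self-contained substitute for the citation.
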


\subsection{Full irreducibility}
Recall that an endomorphism $\varphi\colon \free\to \free$ of a free group is \define{fully irreducible} 
if for any proper nontrivial free factor $A \le \free$, $\varphi^n(A)$ is not conjugate into $A$ for any $n\ge 1$.
Note that fully irreducible endomorphisms are necessarily injective (see, e.g., \cite[Lemma 2.2]{mutanguha2020irreducible}).

In light of \Cref{th:stretch_or}, it will be important to know that the train track maps we consider are automatically primitive. 
The following lemma is essentially well-known, at least in the automorphism case, but we provide a proof for completeness and because it is not always stated correctly in the literature.  

\begin{lemma} \label{lem:get_prim}
Suppose an endomorphism $\varphi \colon \free \to \free$ is either fully irreducible or represented by a pseudo-Anosov homeomorphism. Then $\varphi$ admits a train track representative $f\colon G\to G$. Moreover, the following are equivalent for any such representative: 
{\setlength{\multicolsep}{2pt}
\begin{multicols}{2}
\begin{enumerate}
\item $f$ has no invariant forests, 
\item $f$ is primitive,
\item $f$ is expanding, and 
\item $f$ is irreducible.
\end{enumerate}
\end{multicols}}
\end{lemma}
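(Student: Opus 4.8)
The plan is to prove the existence of a train track representative and then the equivalence $(1)\Leftrightarrow(2)\Leftrightarrow(3)\Leftrightarrow(4)$ by a cycle of implications, drawing on the standard Bestvina--Handel machinery together with the hypotheses on $\varphi$.

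\textbf{Existence.} For the existence of a train track representative, in the automorphism case this is the original theorem of Bestvina--Handel \cite{BestvinaHandel-TTs}; for fully irreducible endomorphisms one invokes the corresponding result of Mutanguha (see \cite{mutanguha2020irreducible}), and for a pseudo-Anosov homeomorphism the invariant foliations directly furnish a train track structure. So I would simply cite these and move on. The substance of the lemma is the equivalence of the four conditions for \emph{any} such representative.

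\textbf{The cycle of implications.} I would argue $(4)\Rightarrow(3)\Rightarrow(2)\Rightarrow(1)\Rightarrow(4)$, or some convenient reordering. The implication $(2)\Rightarrow(4)$ is immediate since primitive matrices are irreducible, and $(2)\Rightarrow(3)$ is immediate from the Perron--Frobenius theorem (a primitive transition matrix has $\lambda_f>1$, so every edge grows). For $(4)\Rightarrow(3)$: if $A$ is irreducible, either $\lambda_f>1$ (and then $f$ is expanding as above) or $\lambda_f=1$, in which case irreducibility forces $A$ to be a permutation matrix, so $f$ permutes the edges; but then $f$ cannot represent a fully irreducible or pseudo-Anosov $\varphi$ (the former because a permutation of edges carries core subgraphs/free factors into conjugates of themselves, the latter because pseudo-Anosovs have dilatation $>1$), giving a contradiction -- so the $\lambda_f=1$ case doesn't occur and $(3)$ holds. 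For $(1)\Rightarrow(4)$ (equivalently, the contrapositive $\neg(4)\Rightarrow\neg(1)$): if $A$ is reducible, there is a proper nonempty invariant subgraph $G'$, i.e.\ $f(G')\subseteq G'$ up to the usual care with which edges occur in $f$-images; one then analyzes the invariant subgraph, using full irreducibility (resp.\ pseudo-Anosov dynamics) to rule out the case that $G'$ carries a nontrivial free factor, which forces $G'$ to be a forest, and this forest is $f$-invariant, negating $(1)$. Finally $(3)\Rightarrow(1)$: an invariant forest $F$ has all its edges mapped to edge paths in $F$, and since a forest is contractible (each component is a tree) the edge lengths under $f$ are controlled -- more carefully, $f$ maps $F$ into $F$ and the restriction is a graph map of a forest, which up to homotopy can be taken to collapse, so those edges fail to expand, contradicting $(3)$. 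It may be cleaner to prove $(3)\Rightarrow(2)$ directly: if $f$ is expanding and a train track map, a standard argument (iterate and pass to the subgraph of edges whose images eventually cross a fixed edge) shows the transition matrix is primitive; but I would double-check whether expanding alone suffices or whether one needs to combine it with irreducibility, in which case the cleaner route is $(3)\Rightarrow(1)$, $(1)\Rightarrow(4)$ with the hypotheses, and $(4)\Rightarrow(2)$ using that an expanding irreducible train track map has aperiodic transition matrix.

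\textbf{Main obstacle.} The delicate point -- and the reason the authors flag that the statement ``is not always stated correctly in the literature'' -- is the implication using full irreducibility, namely ruling out that a proper invariant subgraph (or the complementary structure) carries a nontrivial free factor of $\free$. One must be careful that an invariant subgraph $G'$ of a topological representative genuinely corresponds, via $\pi_1$, to a free factor system that is $\varphi$-invariant up to conjugacy; the subtlety is that $f(G')\subseteq G'$ must be interpreted correctly (every edge in the $f$-image of an edge of $G'$ lies in $G'$), and that the inclusion $\pi_1(G')\hookrightarrow\pi_1(G)\cong\free$ need not be a free factor unless $G'$ is a \emph{core} subgraph -- so one first replaces $G'$ by its core, checks this is still invariant, and only then applies the definition of full irreducibility. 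Handling the endomorphism (non-surjective) case, where $f$ need not be $\pi_1$-surjective, requires the analogous statement from Mutanguha's work rather than the classical automorphism version, and I would lean on \cite{mutanguha2020irreducible,mutanguha--invariance_of_iwip} for the precise formulation. Once that invariant-free-factor step is correctly set up, the rest of the cycle is routine Perron--Frobenius and elementary graph-map bookkeeping.
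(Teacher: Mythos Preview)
Your overall strategy---cycle through the four conditions, with the substantive step being the invariant-subgraph argument---matches the paper's. The existence paragraph and the easy implications $(2)\Rightarrow(3)$, $(2)\Rightarrow(4)\Rightarrow(1)$, $(3)\Rightarrow(1)$ are fine. But there is a real gap in how you reach primitivity.

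\textbf{The gap: getting to $(2)$.} Your cycle establishes $(1)\Leftrightarrow(3)\Leftrightarrow(4)$ (modulo the remarks below) and $(2)\Rightarrow(4)$, but your route back to $(2)$ is ``$(3)\wedge(4)\Rightarrow(2)$: an expanding irreducible train track map has aperiodic transition matrix.'' This is false without the hypothesis on $\varphi$. For instance, $a\mapsto bb$, $b\mapsto aa$ on the $2$--rose is an expanding irreducible train track map whose transition matrix has period $2$; of course this map is not fully irreducible (since $\varphi^2$ fixes $\langle a\rangle$), but that is precisely the point---you must invoke the hypothesis again to rule out periodicity, and you do not say how. The paper's fix is to prove $(1)\Rightarrow(2)$ directly by showing $A^n$ is irreducible for \emph{every} $n\ge 1$: if not, some $(f^n)^j(e)$ never crosses some $e'$, the union of these iterates is an $f^n$--invariant subgraph $G'$ which (using no invariant forest, hence growth) is not a forest, and then $G'$ carries a proper free factor invariant under $\varphi^n$, contradicting full irreducibility. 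In other words, the same invariant-subgraph argument you use for $(1)\Rightarrow(4)$ must be applied to all powers $f^n$, not just to $f$.

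\textbf{Two smaller points.} First, your $(4)\Rightarrow(3)$ via ``permutation of edges carries core subgraphs into themselves'' is not the right justification; a permutation need not have a proper invariant subset. The correct argument is that $f$ would then have finite order, hence $\varphi$ would be finite order in $\mathrm{Out}(\free)$, which is incompatible with being fully irreducible (or pseudo-Anosov) in rank $\ge 2$. Second, in the pseudo-Anosov case your invariant-subgraph step needs more: peripheral curves \emph{are} $\varphi$--periodic rank--$1$ free factors, so full irreducibility fails and you cannot simply cite it. The paper handles this by noting that the core loop of such a $G'$ is legal, hence its length grows under iteration, contradicting the fact that a pseudo-Anosov permutes (hence does not stretch) the peripheral curves.
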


Note that any train track representative satisfies these conditions after collapsing the invariant forest. In the sequel, we usually refer to such train tracks as irreducible.

\begin{proof}
It is a seminal result of Bestvina and Handel that automorphisms which are fully irreducible \cite{BestvinaHandel-TTs} or represented by a pseudo-Anosov \cite[Theorem 4.1.3]{BestvinaHandel-surfaceTT} admit a train track representative. The existence for non-surjective endomorphisms was proven by Reynolds \cite{MR2949815} and independently Mutanguha \cite{mutanguha2020irreducible}.
It is clear that (2) $\implies$ (3) and that  (2) $\implies$ (4) $\implies$ (1).
To see (1) $\implies$ (2), suppose $f$ does not have an invariant forest and let $A$ be its transition matrix. To show that $A$ is primitive, it suffices to show that $A^n$ is irreducible for each $n\ge1$. Otherwise, there is $n\ge1$ and edges $e,e'$ of $G$ such that $(f^n)^j(e)$ does not cross $e'$ for any $j\ge 1$. 
Since there is no invariant forest, the lengths of the nonbacktracking paths $(f^n)^j(e)$ go to infinity in $j$. 
These 
iterates then fill a subgraph $G'$ of $G$ which is not a forest and for which $f^n(G') \subset G'$. 
Since $G'$ does not contain $e'$, it represents a proper free factor which is an immediate contradiction when $\varphi$ is fully irreducible. If instead $\varphi$ is induced by a pseudo-Anosov homeomorphism $h\colon S\to S$ of a punctured surface with $\pi_1(S)\cong \free$, then $G'$ must be a rank $1$ subgraph whose embedded core $\alpha$ represents a peripheral curve. Indeed, these peripheral curves are the only free factors that are invariant under a pseudo-Anosov. By construction, $\alpha$ is legal and hence its length grows under iteration. This is a contradiction to the fact that $h$ permutes the peripheral curves of $S$. 

The final implication (3) $\implies$ (4) is similar:
For any edge $e$ of $G$ the union of images $f^j(e)$ forms an $f$--invariant subgraph $G'\subset G$. 
Since $f$ is an expanding train track map, the nonbacktracking edge paths $f^j(e)$ become arbitrarily long. Hence $G'$ cannot be a forest, since that would force these long immersed paths $f^j(e)$ to embed in the finite graph $G'$. If $G'$ is a proper subgraph of $G$, then we obtain a contradiction by the argument above. This shows that $f$ is irreducible.
\end{proof}

\subsection{Orientable laminations}
\label{sec:or_lam}
Here we briefly review the theory of expanding laminations and attracting trees for free group endomorphisms. This allows for an intrinsically motivated definition of orientability and completes the analogy with the pseudo-Anosov theory.

In  \cite[Section 1]{bestvina1997laminations}, Bestvina--Feighn--Handel define a canonical expanding lamination $\mc L^+_\varphi$ for every fully irreducible automorphism $\varphi$ of $\free$. One may check that the \emph{auto}morphism hypothesis is only used in their argument to ensure the existence of an irreducible train track representative $f\colon G\to G$. Since endomorphisms also admit such representatives (\Cref{lem:get_prim}), the canonical construction of $\mc L^+_\varphi$ from \cite{bestvina1997laminations} in fact applies to any fully irreducible endomorphism $\varphi\colon \free \to \free$. 
The lamination is realized as a $\free$--invariant collection of properly embedded bi-infinite lines (called \define{leaves}) in the universal cover $\wt G$ that is invariant under the lifted train track map $\wt f \colon \wt G\to \wt G$.

The fully irreducible endomorphism $\varphi$ also comes equipped with a canonical \define{attracting $\R$--tree} $T^+_\varphi$ equipped with an isometric $\free$--action, which is the unique attracting fixed point for the action $\varphi$ on the closure of Culler--Vogtmann outer space; see \cite{bestvina1997laminations, levitt2003irreducible,MR2949815,mutanguha2020irreducible}.
The tree may be obtained from the train track representative $f$ via an explicit construction that provides an $\free$--equivariant, $1$--Lipschitz quotient map $F\colon \wt G \to T^+_\varphi$ that restricts to an isometry on each leaf of $\mc L^+_\varphi$. Further, $\varphi$ is represented by $\lambda_\varphi$--homothety $f^+\colon T^+_\varphi\to T^+_\varphi$ of this tree such that $F\circ \wt f = f^+ \circ F$; see \cite[\S2.5]{HM-AxesOutSpace}.

\begin{definition}
\label{def:orientable_lamination}
The expanding lamination $\mc L_\varphi^+$ of a fully irreducible endomorphism $\varphi$ is \define{orientable} if its leaves may each be $\free$--equivariantly oriented so that their images in the attracting tree $T_\varphi^+$ have compatible orientations whenever they overlap.
\end{definition}

\begin{remark}
This generalizes the notion of transverse orientability for the contracting
measured foliation $\mathcal{F}_-$ of a pseudo-Anosov surface homeomorphism $\psi$. Indeed, one may lift to the universal cover and collapse the leaves of $\wt{\mathcal{F}}_-$ to obtain an $\R$--tree $T_\psi^+$ in which each leaf of $\wt{\mathcal{F}}_+$ maps to a properly embedded bi-infinite line. This collection of lines in $T_\psi^+$ is orientable in the sense of \Cref{def:orientable_lamination} if and only if the contracting foliation $\mathcal{F}_-$ of $\psi$ is transversely orientable on the surface.
\end{remark}

\subsection{Characterizing orientability}
\label{sec:characterize_orientability}
We are now ready to establish \Cref{thm:orientable-auto} by proving that the various manifestations of orientability all agree for fully irreducible endomorphisms.

\begin{theorem} \label{th:stretch_or_auto}
For a fully irreducible free group endomorphism $\varphi\colon \free\to \free$, the following conditions 
are equivalent:
\begin{enumerate}
\item $\lambda_\varphi = \rho_\varphi$ with $+\lambda_\varphi$ (resp.\ $-\lambda_\varphi$) an eigenvalue of $\varphi_*$ acting on $H_1(\free)$,
\item $\varphi$ can be represented by a positively (resp.\ negatively) orientable graph map,
\item every irreducible train track representative of $\varphi$ is positively (resp.\ negatively) orientable,
\item the expanding lamination $\mc L^+_\varphi$ of $\varphi$ is orientable with the action of $\varphi$ on the attracting tree $T^+_\varphi$ preserving (resp.\ reversing) the orientations on the leaves of $\mathcal{L}^+_\varphi$.
\end{enumerate}
\end{theorem}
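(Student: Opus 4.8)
The plan is to prove the cycle $(3)\Rightarrow(2)\Rightarrow(1)\Rightarrow(3)$ and then $(3)\Leftrightarrow(4)$, tracking the positive/negative dichotomy throughout. All the input we need about $\varphi$ is recorded in \S\ref{sec:endo}: being fully irreducible, $\varphi$ has an irreducible train track representative; any such representative is primitive (\Cref{lem:get_prim}) and computes the geometric stretch factor $\lambda_\varphi$; and any topological representative $f$ has $\rho_f=\rho_\varphi$ and induces $\varphi_*$ on $H_1(\free;\R)$ up to conjugacy.

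Implication $(3)\Rightarrow(2)$ is just the existence of an irreducible train track representative. For $(2)\Rightarrow(1)$: if $f\colon G\to G$ is a positively (resp.\ negatively) orientable representative of $\varphi$, then $f$ is a train track map, and collapsing its maximal invariant forest produces a representative $\bar f\colon\bar G\to\bar G$ that is irreducible, hence primitive, by \Cref{lem:get_prim} and the remark after it. Collapsing preserves orientability and its sign, since deleting the (consistently oriented) forest edges from each image path $f(e)$ leaves a path of the same sign in $\bar G$. Applying \Cref{th:stretch_or} to the primitive map $\bar f$ gives $\lambda_{\bar f}=\rho_{\bar f}$ with $+\lambda_{\bar f}$ (resp.\ $-\lambda_{\bar f}$) an eigenvalue of $\bar f_*$; since $\lambda_{\bar f}=\lambda_\varphi$, $\rho_{\bar f}=\rho_\varphi$, and $\bar f_*\cong\varphi_*$, this is exactly $(1)$ with the matching sign.

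For $(1)\Rightarrow(3)$: let $f\colon G\to G$ be an arbitrary irreducible train track representative of $\varphi$; it is primitive (\Cref{lem:get_prim}) and satisfies $\lambda_f=\lambda_\varphi=\rho_\varphi=\rho_f$, so $f$ is orientable by \Cref{th:stretch_or}. By the ``moreover'' clause of that theorem, $f$ is pos-orientable iff $+\lambda_f$ is an eigenvalue of $f_*$ and neg-orientable iff $-\lambda_f$ is; these two cases are mutually exclusive, because in the neg-orientable case $M=-A$ with $A$ primitive, so $-\lambda_f$ is an eigenvalue while $+\lambda_f$ is not (as $-\lambda_f$ is not an eigenvalue of the primitive matrix $A$). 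Since $f_*\cong\varphi_*$ and hypothesis $(1)$ specifies which of $\pm\lambda_\varphi$ is an eigenvalue, the orientability sign of $f$ is forced; as $f$ was arbitrary, \emph{every} irreducible train track representative has the stated sign.

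It remains to relate $(3)$ and $(4)$, which is where I expect the main difficulty. Fix an irreducible train track representative $f\colon G\to G$ and use the $\free$--equivariant $1$--Lipschitz quotient $F\colon\wt G\to T^+_\varphi$, which restricts to an isometry on each leaf of $\mc L^+_\varphi$ and satisfies $F\circ\wt f=f^+\circ F$ with $f^+$ a $\lambda_\varphi$--homothety. Given an orientation of $G$ as in $(3)$, lift it $\free$--equivariantly to $\wt G$; since every leaf of $\mc L^+_\varphi$ is a limit of edge subpaths occurring in the iterates $f^{2n}(e)$, and these are positive edge paths (using that $f^2$ is pos-orientable in the negative case), each leaf is a consistently oriented edge path, and pushing forward along $F$ orients its image in $T^+_\varphi$; the hypothesis that $\wt f$ sends positive edges to positive (resp.\ negative) paths, together with $F\circ\wt f=f^+\circ F$, then shows $f^+$ preserves (resp.\ reverses) these orientations, giving $(4)$. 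Conversely, given $(4)$, orient each edge $e$ of $G$ by taking a leaf through a lift $\tilde e$ and pulling the leaf orientation back along $F$; this is well defined because two leaves through $\tilde e$ have overlapping images in $T^+_\varphi$ with compatible orientations, and one reads off from $F\circ\wt f=f^+\circ F$ that $f$ is positively (resp.\ negatively) orientable. The obstacle is precisely the phrase ``overlapping leaf--images have compatible orientations'': one must check that an $\free$--equivariant orientation of the leaves in $\wt G$ descends to a well-defined orientation of $T^+_\varphi$, and dually that the intrinsic orientation of $(4)$ is realized on \emph{some} train track representative --- equivalently, that orientability of $\mc L^+_\varphi$ does not depend on the chosen representative. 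Establishing this requires a direct analysis of how the quotient map $F$ identifies edge paths of $\wt G$.
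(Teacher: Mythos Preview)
Your treatment of the equivalence of (1), (2), and (3) matches the paper's: both invoke \Cref{lem:get_prim} to obtain primitivity of irreducible train track representatives and then appeal directly to \Cref{th:stretch_or}. Your argument for (3)$\Rightarrow$(4) is likewise essentially the paper's.

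The gap you flag in (4)$\Rightarrow$(3) is real, and the paper avoids it by not proving that implication at all. Instead the loop is closed via (4)$\Rightarrow$(1), using \Cref{prop:cocyle}: from an orientation of $\mc L^+_\varphi$ one constructs a homomorphism $\xi\colon\free\to\R$ by $g\mapsto\int_{[x,gx]}d\mc L$, the signed length in $T^+_\varphi$ measured against the leaf orientations, and checks directly from the $\lambda_\varphi$--homothety $f^+$ that $\xi\circ\varphi = \pm\lambda_\varphi\cdot\xi$. This exhibits a $\pm\lambda_\varphi$--eigenvector of $\varphi^*$ in $H^1(\free;\R)$, hence of $\varphi_*$ in $H_1(\free;\R)$, which is (1) with the correct sign. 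The integration cocycle is manifestly well defined on $T^+_\varphi$---it uses only that overlapping leaf images are compatibly oriented, which is precisely the hypothesis of (4)---and so sidesteps entirely the need to descend the orientation to a particular graph $G$ or to compare different train track representatives.
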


Accordingly, we say a fully irreducible endomorphism is \define{(positively/negatively) orientable} if it satisfies the equivalent conditions of \Cref{th:stretch_or_auto}. 
To prove the characterization in terms of laminations, we will use the following lemma:

\begin{lemma}
For any arc $J$ in $T^+_\varphi$, there is a finite chain of leaves $\ell_i$ of the expanding lamination $\mc L^+_\varphi$ such that $J \subset \ell_1\cup\dots\cup\ell_n$ with  each $\ell_i\cap \ell_{i+1}$ a nondegenerate arc (i.e it contains at least two points).
\end{lemma}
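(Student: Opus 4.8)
The plan is to exploit the explicit construction of the attracting tree $T^+_\varphi$ as a quotient $F\colon \wt G \to T^+_\varphi$ of the universal cover of the train track graph, under which each leaf of $\mc L^+_\varphi$ maps isometrically onto a bi-infinite line, together with the fact that the lamination "fills" $\wt G$ in a strong sense coming from primitivity of $f$. Concretely, an arc $J\subset T^+_\varphi$ is the image under $F$ of some finite embedded path (or, after subdividing, a concatenation of finitely many pieces each lying inside the image of a single edge's worth of the cover), so it suffices to produce a single leaf whose $F$-image contains $J$, or to cover $J$ by overlapping chunks of leaf-images and then stitch the corresponding leaves together. First I would reduce to the case where $J$ is very short: since any arc is a finite concatenation of short subarcs, and a chain of chains of leaves is again a chain of leaves (one only needs to check that consecutive members of the concatenated chain overlap in a nondegenerate arc, which can be arranged by choosing the short subarcs to overlap), it is enough to handle arcs $J$ contained in the $F$-image of a single edge $\wt e$ of $\wt G$.

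Next I would use the standard density/recurrence property of the expanding lamination of a primitive (equivalently irreducible) train track map: because the transition matrix $A$ of $f$ has a positive power, every edge of $G$ is crossed by $f^k(e')$ for every edge $e'$ and all large $k$, so in particular each edge $e$ of $G$ is covered by a sub-edge-path of some iterate $f^N(e_0)$, and these long legal paths converge to leaves of $\mc L^+_\varphi$. Consequently, for any edge $\wt e$ of $\wt G$ there is a leaf $\ell$ of $\mc L^+_\varphi$ with $\wt e\subset \ell$ (this is essentially the defining property of $\mc L^+_\varphi$ in \cite{bestvina1997laminations}, and is exactly where full irreducibility, via \Cref{lem:get_prim}, is used). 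Since $F$ restricts to an isometry on each leaf, $F(\ell)$ is a bi-infinite line in $T^+_\varphi$ containing the nondegenerate arc $F(\wt e)\supseteq J$. Thus in the short-arc case a single leaf suffices, with $n=1$.

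Finally I would assemble the general case. Given an arbitrary arc $J\subset T^+_\varphi$, I pull it back: choose a path $\wt\gamma$ in $\wt G$ with $F(\wt\gamma)\supseteq J$ (possible since $F$ is a surjective quotient map and arcs can be lifted through the collapse, up to subdividing), write $\wt\gamma = \wt e_{i_1}\wt e_{i_2}\cdots \wt e_{i_m}$ as an edge path, pick for each $j$ a leaf $\ell_j$ containing $\wt e_{i_j}$ as above, and note that $F(\ell_j)\cap F(\ell_{j+1})$ contains the common endpoint region of $\wt e_{i_j}$ and $\wt e_{i_{j+1}}$, which I can thicken to a nondegenerate overlap by instead choosing each $\ell_j$ to contain the slightly longer path $\wt e_{i_j}\wt e_{i_{j+1}}$ (again using that any legal path extends inside a leaf). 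After discarding redundant terms, this yields the required finite chain $\ell_1,\dots,\ell_n$ with each $\ell_i\cap\ell_{i+1}$ nondegenerate and $J\subset F(\wt\gamma)\subset \ell_1\cup\cdots\cup\ell_n$ (identifying leaves with their images in $T^+_\varphi$). The main obstacle I anticipate is the bookkeeping around the collapse map $F$: one must be careful that "lifting" the arc $J$ to an honest edge path in $\wt G$ is legitimate and that the isometric-on-leaves property really does turn overlaps of edge paths in $\wt G$ into \emph{nondegenerate} overlaps of the leaf images in $T^+_\varphi$ (a priori $F$ could collapse the overlap); this is handled by arranging each overlap to contain a full edge, whose $F$-image is nondegenerate because $F$ is an isometry on the leaf through it.
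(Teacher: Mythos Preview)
Your argument has a genuine gap at the ``thickening'' step. You propose to ensure nondegenerate overlaps by choosing each $\ell_j$ to contain the two-edge path $\wt e_{i_j}\wt e_{i_{j+1}}$, invoking that ``any legal path extends inside a leaf.'' But the lifted path $\wt\gamma$ is an \emph{arbitrary} edge path in $\wt G$: there is no reason the turn at the common vertex of $\wt e_{i_j}$ and $\wt e_{i_{j+1}}$ is legal, and even when it is legal it need not be taken by any leaf of $\mc L^+_\varphi$. (The turns taken by leaves are precisely those recorded in the Whitehead graph, which is typically a proper subgraph of the complete graph on directions.) So in general no single leaf contains $\wt e_{i_j}\wt e_{i_{j+1}}$, and your chain $\ell_1,\dots,\ell_m$ may have consecutive members meeting only at a point of $T^+_\varphi$. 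Your earlier reduction to ``short arcs'' does not sidestep this: the $F$-images of consecutive edges of $\wt\gamma$ share only the image of their common vertex, so you cannot subdivide $J$ into \emph{overlapping} subarcs each contained in the $F$-image of a single edge.

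What is missing is precisely a mechanism for bridging two directions at a vertex through a chain of leaves with nondegenerate overlaps. The paper supplies this in two different ways depending on the case. When $\varphi$ is non-surjective, $T^+_\varphi$ is literally $\wt G$ with a metric, and connectivity of the Whitehead graph at each vertex (available from Mutanguha's train track) lets one build, at each vertex along $J$, a finite chain of leaves linking the incoming and outgoing directions; concatenating these vertex-chains gives the result. When $\varphi$ is an automorphism, the tree is not simplicial and this local picture is unavailable; instead the paper invokes the known fact that $\free\curvearrowright T^+_\varphi$ is \emph{indecomposable} in Guirardel's sense, which says exactly that translates of any fixed nondegenerate arc (take one inside a leaf) cover any given arc $J$ with nondegenerate overlaps. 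Your outline would become correct if, at each problematic vertex, you inserted one of these two arguments rather than asserting the existence of a single leaf through the turn.
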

\begin{proof}
In the case that $\varphi$ is an automorphism, it is well-known that the action $\free \curvearrowright T^+_\varphi$ is \define{indecomposable} \cite[Theorem 2.1]{coulbois2012botany}. Following Guirardel \cite{guirardel2008actions}, this means that for any other nondegenerate arc $I \subset T^+_\varphi$, there are $g_1, \ldots g_k \in F_n$ such that $J \subset g_1I \cup \ldots \cup g_kI$ and $g_iI \cap g_{i+1}I$ is nondegenerate. Hence choosing $I$ to be any arc in a leaf of $\mc L^+_\varphi$ completes the proof.

When $\varphi$ is non-surjective, there is an irreducible train track $f\colon G\to G$ that is an immersion with connected Whitehead graphs \cite{mutanguha2020irreducible}. Thus by construction the attracting tree $T^+_\varphi$ is simply the universal cover $\wt G$ with an appropriate metric. Recall that the Whitehead graph of a vertex of $\wt G$ records which turns are taken by leaves of the lamination. Since these graphs are connected, for any vertex of $\wt G$ we may choose a chain of leaves $\ell_1,\dots,\ell_{k}$ with nondegenerate overlaps $\ell_i\cap \ell_{i+1}$ that connect any pair of directions at that vertex. Concatenating such chains for the finitely many vertices along the arc $J$ produces a chain covering $J$ as in the lemma.
\end{proof}

This has two important consequences when $\mc L^+_\varphi$ is orientable: First, the orientation on any leaf determines the orientation on all other leaves; thus the orientation is unique up to a global reversal, and the action of $f^+$ on $T^+_\varphi$ either preserves or reverses the orientation.
Second, any oriented arc $\alpha$ in $T^+_\varphi$ can be decomposed as a concatenation of subarcs of leaves of $\mc L^+_\varphi$.
Since leaves are consistently oriented, each subarc inherits a signed length from the metric on $T^+_\varphi$, and we may add to get a signed length $\int_\alpha d\mc L^+_\varphi$ for $\alpha$. This assignment $\alpha \mapsto \int_\alpha d\mc L^+_\varphi$ is $\free$--equivariant and satisfies
$\int_{f^+(\alpha)} d \mc L^+_\varphi = \pm \lambda_\varphi \cdot \int_{\alpha} d\mc L^+_\varphi$, where the sign is positive or negative depending on whether the $\lambda_\varphi$--homothety $f^+$ preserves or reverses the orientation on $\mc L^+_\varphi$.

\begin{lemma} \label{prop:cocyle}
If $\mc L = \mc L^+_\varphi$ is orientable and $f^+$ preserves (resp.\ reverses) its orientation and fixes the point $x \in T^+_\varphi$,
then the homomorphism $\xi \colon \free \to \R$ determined by
\[
 g \mapsto \int_{[x,gx]} d \mc L\qquad\text{for }g\in \free
\]
defines a $\lambda_\varphi$-eigenvector (resp.\ $- \lambda_\varphi$-eigenvector) of $\varphi^*$ in $H^1(\free; \mathbb{R})$.
\end{lemma}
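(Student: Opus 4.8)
The plan is to verify directly that $\xi$ is a well-defined homomorphism and then that $\varphi^*(\xi) = \pm\lambda_\varphi\,\xi$, using the signed-length functional $\alpha\mapsto\int_\alpha d\mc L$ and its transformation rule under $f^+$ established above. First I would check that $\xi$ is a well-defined group homomorphism: since $x\in T^+_\varphi$ is fixed by $f^+$ and $\free$ acts on $T^+_\varphi$ by isometries, for $g,h\in\free$ the concatenation of the arc $[x,gx]$ with $g\cdot[x,hx] = [gx,ghx]$ is an arc (or backtracking path) from $x$ to $ghx$; additivity of the signed-length integral over concatenations — together with the fact that a leaf traversed and then immediately reversed contributes zero — gives $\xi(gh) = \xi(g) + \xi(h)$. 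Thus $\xi\in\mathrm{Hom}(\free,\R) = H^1(\free;\R)$.

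Next I would compute the effect of $\varphi^*$. Recall $\varphi$ is represented by the $\lambda_\varphi$-homothety $f^+\colon T^+_\varphi\to T^+_\varphi$ with $f^+(x) = x$, meaning $f^+(g\cdot y) = \varphi(g)\cdot f^+(y)$ for all $g\in\free$, $y\in T^+_\varphi$. Hence $f^+$ maps the arc $[x,gx]$ to the arc $[f^+(x), f^+(gx)] = [x, \varphi(g)x]$ (up to tightening, which doesn't affect the integral). Applying the transformation rule $\int_{f^+(\alpha)} d\mc L = \pm\lambda_\varphi\int_\alpha d\mc L$ with $\alpha = [x,gx]$ yields
\[
(\varphi^*\xi)(g) = \xi(\varphi(g)) = \int_{[x,\varphi(g)x]} d\mc L = \int_{f^+([x,gx])} d\mc L = \pm\lambda_\varphi \int_{[x,gx]} d\mc L = \pm\lambda_\varphi\,\xi(g),
\]
with the sign positive when $f^+$ preserves the orientation on $\mc L$ and negative when it reverses it. This is exactly the eigenvector equation $\varphi^*(\xi) = \pm\lambda_\varphi\,\xi$ in $H^1(\free;\R)$.

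It remains only to rule out the degenerate possibility $\xi = 0$, i.e.\ to confirm $\xi$ is a genuine (nonzero) eigenvector. Here I would use that $f^+$ is a $\lambda_\varphi$-homothety with $\lambda_\varphi > 1$, so it has a unique fixed point $x$ and the action $\free\curvearrowright T^+_\varphi$ is nontrivial with unbounded orbits; choosing $g$ so that $[x,gx]$ contains a nondegenerate arc, the decomposition of that arc into consistently-oriented leaf subarcs forces $\int_{[x,gx]} d\mc L \neq 0$ provided the chosen arc does not happen to "cancel" — and by translating $x$ along a leaf direction via a suitable power one can always arrange a noncancelling arc, since the signed lengths of the pieces of a sufficiently long sub-leaf segment are all of one sign.

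The main obstacle I anticipate is this last nonvanishing point, together with making the "concatenation is an arc up to backtracking, and the integral is additive with reversed leaves cancelling" argument fully rigorous — one needs that $\int_\alpha d\mc L$ depends only on the endpoints of $\alpha$ within a leaf-chain decomposition, which follows from the consistency of the orientations on overlapping leaves guaranteed by the preceding lemma, but deserves to be spelled out. The homothety computation itself is then essentially formal.
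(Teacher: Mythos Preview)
Your approach is essentially the same as the paper's: you verify additivity of the signed-length integral (the paper phrases this via the tripod identity $\int_{[w,z]} d\mc L = \int_{[w,y]} d\mc L + \int_{[y,z]} d\mc L$, which is a cleaner way to say what you express through ``concatenation with backtracking cancels''), invoke $\free$--equivariance to get $\xi(gh)=\xi(g)+\xi(h)$, and then compute $\xi(\varphi(g))$ exactly as you do using $f^+(x)=x$ and the homothety rule. One remark: the paper's proof does \emph{not} address the nonvanishing of $\xi$ that you flag as an obstacle---it simply establishes the eigenvector equation and leaves nontriviality implicit---so your concern is legitimate but goes beyond what the paper actually proves here.
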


\begin{proof}
First note that by considering the corresponding tripod in $T^+_\varphi$, we see that  for any $w,y,z \in T^+_\varphi$,
\[
\int_{[w,z]} d\mc L = \int_{[w,y]} d\mc L + \int_{[y,z]} d\mc L.
\]
Using $\free$--equivariance, this translates into the fact that for any $g,h \in \free$, $\xi(gh) = \xi(g)+\xi(h)$ by setting $w=x, \: y= gx, \: z= ghx$. To complete the proof, observe that
\[
\xi(\varphi(g)) = \int_{[x,f^+(gx)]} d\mc L = \int_{f^+([x,gx])} d\mc L = \pm \lambda_\varphi \cdot \xi(g),
\]
as required.
\end{proof}

\begin{proof}[Proof of \Cref{th:stretch_or_auto}]
By \Cref{lem:get_prim} every irreducible train track representative of $\varphi$ 
 is primitive. Hence, the equivalence of (1), (2), and (3) follows immediately from \Cref{th:stretch_or}. Since every subarc of a leaf of $\mc L^+_\varphi$  in $\wt G$ is contained in $\wt f^n(e)$ for some edge of $\wt G$, it is clear that $f$ being orientable allows one to consistently and equivariantly  orient the leaves of $\mc L^+_\varphi$ in $T^+_\varphi$; thus (3) implies (4). Finally, \Cref{prop:cocyle} shows that (4) implies (1).
\end{proof}

\section{Computing the Alexander polynomial}
\label{sec:alex_poly_of_free-by-cyclic}
In this section, we prove \Cref{thm:det_form} which gives a determinant formula for the Alexander polynomial of the mapping torus of a graph map.  See \Cref{sec:AntiO} for an explicit computation.

\subsection{Fitting ideals and invariants} \label{sec:invariants}
Let $R$ be a unique factorization domain (UFD).
Throughout, we will use the notation $p \doteq q$ to mean that two elements $p,q$ of $R$ are equal up to multiplication by a unit.
Let $N$ be a finitely generated
$R$-module. If $N$ has a presentation,
\[
 R^m \overset{M}{\longrightarrow} R^n \to N \to 0,
\]
then the \emph{$i$th Fitting ideal} (also called the $i$th determinantal ideal)
$\mathrm{Fit}_i(N) \subset R$ is the ideal generated by the $(n-i) \times (n-i)$ minors of $M$. It is a fact that these ideals do not depend on the choice of presentation.

Recall that since $R$ is a UFD the greatest common divisor (gcd) of elements of $R$ is well-defined up to a unit in $R$. For an $R$-module $N$ as above, the gcd of the elements of 
$\mathrm{Fit}_i(N)$,
$\mathrm{gcd}(\mathrm{Fit}_i(N))$, is called the \emph{$i$th Fitting invariant} of $N$. 

Let $H$ be a finitely generated free abelian group. Then the group ring $\Z[H]$ is a UFD and its \emph{augmentation ideal} $\mc A$ is defined to be the kernel of the augmentation homomorphism $\mathbb{Z}[H] \to \mathbb{Z}$ defined by extending the assignment $h \mapsto 1$ for $h \in H$.

Finally, note that any $P \in \Z[H]$ can be written 
in the form $P = \sum_{h\in H} a_h h$, where each $a_h\in \Z$ and $a_h = 0$ for all but finitely many $h$. Such a $P \in \Z[H]$ is said to be \emph{symmetric} if, up to a unit in $\Z[H]$, it is equal to $\sum_{h\in H} a_h h^{-1}$.
In general the set $\supp(P) =  \{h \in H : a_h \ne 0\}$ is called the \define{support} of $P$.
The \emph{specialization} of $P$ at a class $u \in  \mathrm{Hom}(H,\Z)$ is the the single-variable Laurent polynomial
\[
P^u(t) = \sum_{h\in H} a_h t^{u(h)}.
\]

\subsection{Alexander polynomial generalities}
\label{sec:alex_poly}
Let $X$ be a connected cell complex with $\Gamma = \pi_1(X)$ and let $H = H_1(X)/\text{torsion}$.

Let $\wt X^{ab}$ be the universal free abelian cover of $X$, i.e.~the cover corresponding to the kernel of $\Gamma \to H$ whose deck group is $H$. Fix $x \in X^{(0)}$ and let $\wt x$ be its full preimage in $\wt X^{ab}$. The homology group $H_1(\wt X^{ab}, \wt x)$ naturally has the structure of a $\mathbb{Z}[H]$--module and is called the \emph{Alexander module}. Its first Fitting invariant is called the \emph{Alexander polynomial} and is denoted by $\Delta_X$. In other words,
\[
\Delta_X \doteq \text{gcd} \: \mathrm{Fit}_1(H_1(\wt X^{ab}, \wt x)),
\]
which is an element of $\mathbb{Z}[H]$, defined up to a unit $\pm h \in \mathbb{Z}[H]$. We note that if $T$ is a maximal tree in $X^{(1)}$ and $\wt T$ is its full preimage in $\wt X^{ab}$, then we see that the $\mathbb{Z}[H]$--modules $H_1(\wt X^{ab}, \wt x)$ and $H_1(\wt X^{ab}, \wt T)$ are isomorphic by considering the long exact sequence associated to the triple $(\wt X^{ab}, \wt T, \wt x)$. 

In fact, the Alexander polynomial of $X$ depends only on its fundamental group (see \cite[Section 2]{McMullen--Alexander_polynomial}), and so in what follows we also write $\Delta_\Gamma$ to denote the Alexander polynomial $\Delta_X$, where $\Gamma = \pi_1(X)$. 

\subsection{Mapping tori and homology}
\label{sec:mapping_tori}
Let $f\colon G\to G$ be a graph map on a finite graph $G$. 
Fix an arbitrary orientation on each edge of $G$ and let $V = VG$ and $E = EG$ respectively denote the sets of vertices and (oriented) edges of $G$. Let $X := X_f$ 
be the mapping torus with the usual cell structure, so that $\pi_1(X) = \pi_1(G)\ast_{f}$ is the associated generalized HNN-extension. 

In a bit more detail: Give $G \times [0,1]$ the cell structure induced from the product
cell structure so that the oriented $2$-cells induce the given orientation on the edges of $G \times \{0\}$ and the opposite orientation on the edges of $G \times \{1\}$. Then subdivide $G \times \{1\}$ so that $f \colon G \times \{1\} \to G \times \{0\}$ maps open cells homeomorphically onto their images.
The mapping torus $X$ is then the quotient
\[
X = G \times [0,1] / (x,1) \sim (f(x),0),
\]
with its induced cellular structure. See \Cref{fig:2cell}.

\begin{figure}[ht]
    \centering
    \def\svgwidth{.37\columnwidth}
\begingroup%
  \makeatletter%
  \providecommand\color[2][]{%
    \errmessage{(Inkscape) Color is used for the text in Inkscape, but the package 'color.sty' is not loaded}%
    \renewcommand\color[2][]{}%
  }%
  \providecommand\transparent[1]{%
    \errmessage{(Inkscape) Transparency is used (non-zero) for the text in Inkscape, but the package 'transparent.sty' is not loaded}%
    \renewcommand\transparent[1]{}%
  }%
  \providecommand\rotatebox[2]{#2}%
  \newcommand*\fsize{\dimexpr\f@size pt\relax}%
  \newcommand*\lineheight[1]{\fontsize{\fsize}{#1\fsize}\selectfont}%
  \ifx\svgwidth\undefined%
    \setlength{\unitlength}{183.2382517bp}%
    \ifx\svgscale\undefined%
      \relax%
    \else%
      \setlength{\unitlength}{\unitlength * \real{\svgscale}}%
    \fi%
  \else%
    \setlength{\unitlength}{\svgwidth}%
  \fi%
  \global\let\svgwidth\undefined%
  \global\let\svgscale\undefined%
  \makeatother%
  \begin{picture}(1,0.60892216)%
    \lineheight{1}%
    \setlength\tabcolsep{0pt}%
    \put(0,0){\includegraphics[width=\unitlength,page=1]{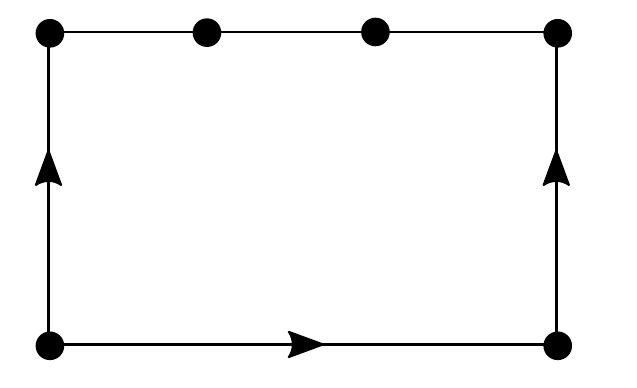}}%
    \put(0.89775415,0.33350171){\color[rgb]{0,0,0}\makebox(0,0)[lt]{\lineheight{1.25}\smash{\begin{tabular}[t]{l}$e_w$\end{tabular}}}}%
    \put(-0.00271803,0.33453537){\color[rgb]{0,0,0}\makebox(0,0)[lt]{\lineheight{1.25}\smash{\begin{tabular}[t]{l}$e_v$\end{tabular}}}}%
    \put(0.44586905,0.00481251){\color[rgb]{0,0,0}\makebox(0,0)[lt]{\lineheight{1.25}\smash{\begin{tabular}[t]{l}$e$\end{tabular}}}}%
    \put(0.04752034,0.00916007){\color[rgb]{0,0,0}\makebox(0,0)[lt]{\lineheight{1.25}\smash{\begin{tabular}[t]{l}$v$\end{tabular}}}}%
    \put(0.84338657,0.0048177){\color[rgb]{0,0,0}\makebox(0,0)[lt]{\lineheight{1.25}\smash{\begin{tabular}[t]{l}$w$\end{tabular}}}}%
    \put(0.41279229,0.5840442){\color[rgb]{0,0,0}\makebox(0,0)[lt]{\lineheight{1.25}\smash{\begin{tabular}[t]{l}$f(e)$\end{tabular}}}}%
  \end{picture}%
\endgroup%

    \caption{A $2$-cell of $X_f$ with indicated orientations on edges.}
    \label{fig:2cell}
\end{figure}

The mapping torus comes with a natural map $X \to S^1$. In the cell structure we view edges of $G$ as ``horizontal'' and the other edges as ``vertical,'' oriented so that each maps to a positively oriented based loop in $S^1$ representing the generator $1\in \Z$. Let $\mathcal{E}$ denote the set of (oriented) edges in $X$.
Each vertex $v\in V$ is the tail (i.e., initial vertex) of a unique vertical edge $e_v$ in $X$. This gives a bijection 
$v\mapsto e_v$ which we use to identify $V$ with the set of vertical edges of $X$. 
When we wish to be explicit, we denote this bijection by $\nu$.
Similarly each edge $e\in E$ gives rise to an oriented $2$--cell $\sigma_e$ of $X$ whose boundary crosses $e$ positively and the image path $f(e)$ negatively. In this way we see that the $1$--cells of $X$ are in bijective correspondence with $E\sqcup V$, and the $2$--cells with the edges $E$ of $G$. With these identifications, the $\Z$--modules of $0$--chains in $X$ are $C_0(X) = C_0(G)\cong \Z[V]$, the $1$--chains are $C_1(X)\cong \Z[\mathcal{E}] \cong\Z[E\sqcup V]$, and the $2$--chains are $C_2(X) \cong \Z[E]$.

The map $f\colon G\to G$ restricts to a map $V\to V$ which we express as an integer $V\times V$ matrix $P$ whose $(v,w)$ entry equals $1$ if $f(w) = v$ and is zero otherwise. That is, the map that $f$ induces on $0$--chains $C_0(G) \cong \Z[V] = \Z^V$ is simply the linear map $\Z^V\to \Z^V$ given by $x\mapsto Px$. Similarly, the map that $f$ induces on $C_1(G)\cong \Z^E$ is expressed as an integer $E\times E$ matrix $M$ whose $(e,e')$ entry is the number of times the edge path $f(e')$ crosses $e$, counted with signs.
The boundary maps on chains may then be written as
\[\begin{tikzcd}
C_2(X) \arrow[rrr,"\partial_2"] \arrow[d,"\cong"] &&& C_1(X) \arrow[rrr,"\partial_1"] \arrow[d,  "\cong"] &&& C_0(X) \arrow[d, "\cong"]\\
\Z^E \arrow[rrr, "\begin{pmatrix}\pi_E \partial_2 \\ \pi_V\partial_2\end{pmatrix}"]&&& \Z^E\oplus \Z^V \arrow[rrr, "\begin{pmatrix}\partial_1\vert_E \;\;\; \partial_1\vert_V\end{pmatrix}"] &&& \Z^V
\end{tikzcd}
\]
That is, $\partial_1\vert_E\colon \Z^E\to \Z^V$ is the restriction of $\partial_1$, viewed as an $E\times V$ matrix, and similarly for $\partial_1\vert_V$. Likewise $\pi_E\partial_2$ is the $E\times E$ matrix describing the projection $\Z^E\stackrel{\partial_2}{\longrightarrow} \Z^E\oplus \Z^V \stackrel{\pi_E}{\longrightarrow}\Z^E$, and similarly for $\pi_V\partial_2$. 
Note that the boundary of a vertical edge $e_v$ corresponding to its tail vertex $v\in V$, is $f(v) - v$. Hence $\partial_1\vert_V = P-I$. Similarly, if an edge $e\in E \subset C_1(X)$ has image $f(e) = a_1 e_1 + \dots + a_k e_k\in C_1(X)$, then the corresponding $2$--cell has boundary
\[\partial_2(\sigma_e) = e  - a_1 e_1 - \dots - a_k e_k+ \nu \circ \partial_1(e).\]
That is to say we have $\pi_E \partial_2 =I - M$, and $\pi_V \partial_2 = \partial_1\vert_E$.

\subsection{The universal free abelian cover of $X$}
\label{sec:abelian_cover}
To calculate the Alexander polynomial for $\pi_1(X)$,  let $H = H_1(X)/\text{torsion}$ and as usual let $\wt{X}^{ab}\to X$ be the universal free abelian cover with deck group $H$. In what follows, 
we abbreviate $\wt X^{ab}$ by $\wt X$. Again, we view lifts of edges of $G$ as being horizontal in $\wt{X}$, and lifts of the oriented vertical edges in $X$ as being oriented upward.

Let $\wt{G}\subset \wt{X}$ be the full preimage of $G$, and choose some connected component $\wt G_0$. Then $\wt G_0 \to G$ is the free abelian cover with deck group $K$, where $K$ is the image of the homomorphism $\pi_1(G)\to \pi_1(X)\to H$. 
Alternatively, 
$K$ can be intrinsically characterized as the quotient of $H_1(G)$ isomorphic to $\mathrm{Hom}(H^1(G)^f, \mathbb{Z})$ by duality. Here $H^1(G)^f$ is the cohomology of $G$ that is fixed by $f$ (see \cite[Section 8]{DKL2}).

The graph map $f \colon G \to G$ lifts to $\wt G_0$ and we 
fix such a lift 
\[
\wt f \colon \wt G_0 \to \wt G_0
\]
once and for all. This lift determines a splitting 
$H = K \oplus \langle \dt \rangle$ where $\dt$ is the deck transformation chosen so that for any $\wt x \in \wt G_0$, $\wt f(\wt x)$ is the first point in $\wt G_0$ encountered by 
the lift of the arc $x \times [0,1]$
starting at $\dt \wt x$. 
Here $x$ is the image of $\wt x$ under the covering $\wt X \to X$. Note that the splitting $H = K \oplus \langle \dt \rangle$ depends on our choice of $\wt f$ and  $\dt$ has been chosen to map to $-1$ under $\pi_1(X) \to  \pi_1(S^1)$. Hence, $\dt$ translates downward as a deck transformation of $\wt{X}$. For notational convenience, we set $\posgen = \dt^{-1}\in H$. 
For each $i\in \Z$ let $\wt G_i = \posgen^{i} \wt G_0$; this decomposes $\wt G$ into its connected components $\wt G = \bigsqcup_{i} \wt G_i$, with $\wt G_{i+1}$ situated above $\wt G_i$. 

For each cell in $G$, we fix arbitrary lifts in $\wt X$ as follows. For each $v\in V$ we fix a lift $\wt{v}\in \wt G_0$ and we let $\wt{e}_v$ denote the lift of $e_v$ with tail $\wt{v}$. For each edge $e\in E$ we fix, once and for all, a lift $\wt{e}\in \wt G_0$ which may or may not be based at the chosen lifts of vertices of $G$. We then let $\wt{\sigma}_e$ be the lift of the $2$--cell $\sigma_e$ whose boundary crosses $\wt{e}$ positively and $\posgen\wt{f}(\wt e)\subset \wt G_1$ negatively.  Note that this chooses a preferred lift to $\wt X$ for each edge of $\mc E$ which we call the \emph{base lifts}.
Since $H$ acts as deck transformations on $\wt X$, the $\Z$--modules of chains in $\wt X$ are naturally free $\Z[H]$--modules. Using our chosen base lifts as basis elements thus gives $\Z[H]$--module isomorphisms
\[C_2(\wt X) \cong \Z[H]^E, \quad C_1(\wt X) \cong \Z[H]^{E}\oplus \Z[H]^V,\quad\text{and}\quad C_0(\wt X) \cong \Z[H]^V.\]
The action of $K$ on $\wt G_0$ by deck transformations similarly gives $C_1(\wt G_0) \cong \Z[K]^E$ and $C_0(\wt G_0) \cong \Z[K]^V$. With  these identifications, the maps that $\wt{f}$ induces on $C_1(\wt G_0)$ and $C_0(\wt G_0)$ are expressed by $\Z[K]$--matrices $\wt M$ and $\wt P$ that respectively specialize to $M$ and $P$ under the ring morphism $\Z[K] \to \Z$ induced by the trivial group homomorphism $K \to \{1\}$.

\subsection{The chain complex of $\wt X$}
We now examine the boundary maps $\wt{\partial_i}$ for the cellular homology of $\wt X$. Firstly, let us write $\wt B = \wt{\partial_1}\vert_{\Z[H]^E}$ for the matrix representing the restriction of $\wt{\partial_1}$ to the horizontal edges of $\wt X$. Since the basis horizontal edges $\wt{e}$ and their boundary vertices lie in $\wt G_0$, we see that $\wt B$ is in fact a $\Z[K]$--valued $V\times E$ matrix. With this notation, each basis $2$--cell $\wt{\sigma}_e$ and each basis vertical $1$--cell $\wt{e}_v$ has boundary
\[\wt\partial_2(\tilde{\sigma}_e) = \wt{\nu}\circ \wt{\partial_1}(\wt{e}) + \wt{e} - \posgen\wt{f}(\wt{e}) = \wt B \wt{e} + \wt{e} - \posgen\wt M \wt{e}
\qquad\text{and}\qquad
\wt \partial_1(\wt{e}_v) = \posgen\wt P \wt{v}  - \wt{v}.\] 
Therefore the chain complex that computes the cellular homology of $\wt X$  takes the form
\[\begin{tikzcd}
C_2(\wt X) \arrow[rrrr,"\wt{\partial_2}"] \arrow[d,"\cong"] &&&& C_1(\wt X) \arrow[rrr,"\wt{\partial_1}"] \arrow[d,  "\cong"] &&& C_0(\wt X) \arrow[d, "\cong"]\\
\Z[H]^E \arrow[rrrr, "\begin{pmatrix}I - \posgen\wt M\\ \wt B\end{pmatrix}"]&&&& \Z[H]^E\oplus \Z[H]^V \arrow[rrr, "\begin{pmatrix}\wt B \;\;\;\; \posgen\wt P - I\end{pmatrix}"] &&& \Z[H]^V
\end{tikzcd}
\]

The chain condition $\wt{\partial_1} \wt{\partial_2} = 0$ has a useful interpretation in terms of the coboundary maps $\wt{\delta^1}\colon C^0(\wt X) \to C^1(\wt X)$ and $\wt{\delta^2}\colon C^1(\wt X)\to C^2(\wt X)$ on cochains, where our bases induce isomorphisms $C^i(\wt X)\cong C_i(\wt X) \cong \Z[H]^{d_i}$ with respect to which  $\wt{\delta^i} = \wt{\partial_i}^t$ is simply the transpose of the boundary matrix.
For any vertex $w$ in $\wt X$, the vector $\wt{\delta^1}(w)\in C^1(\wt X)$ is the sum of those edges of $\wt X$ that terminate at $w$ minus the sum of edges that start at $w$.  
Similarly, for each edge $\eta$ of $\wt X$, the vector $\wt{\partial^2}(\eta)\in C^2(\wt X)$ is the sum of $2$--cells whose boundary crosses $\eta$ positively minus those whose boundary crosses $\eta$ negatively. For short hand, let us set $R_\eta = \wt{\delta^2}(\eta)\in C^2(\wt X)\cong \Z[H]^E$. Thus the row of the matrix $\wt{\partial_2}$ corresponding to an index $x\in \mathcal{E}$ is precisely $R_{\wt x}$. 
Now for any cochain $w\in C^0(\wt X)$, we may express $\wt{\delta^1}(w)\in C^1(\wt X)$ as a vector in $\Z[H]^\mathcal{E}$ and apply $\wt{\delta^2}$ to obtain a $\Z[H]$--linear dependence relation between the rows of $\wt{\partial_2}$:
\begin{equation}
\label{eqn:vertex-row-relation}
\wt{\delta^1}(w) = \sum_{x\in \mathcal{E}}a_x \wt x
\quad\implies\quad
 0 = \wt{\delta^2}(\wt{\delta^1}(w)) = \sum_{x\in \mc E} a_x R_{\wt x}.
\end{equation}

\subsection{Relative homology and vertex cycles}
\label{sec:rel_hom}
We will use relative homology to calculate the Alexander polynomial, and for this it will be convenient to use a well-chosen maximal tree in $X^{(1)}$. Let $\mathcal{V}\subset X$ be the union of all vertical edges, decomposed into its connected components $\mathcal{V} = \mathcal{V}_1\sqcup\dots\sqcup\mathcal{V}_m$. Note that each $\mathcal{V}_k$ is a directed embedded \emph{vertex cycle}, denoted $\mathcal{V}_k'$, with trees hanging off. Here the loop corresponds to a periodic orbit $v,f(v),\dots,f^{\ell_k}(v) =v$ of length $\ell_k\ge 1$ for the action of $f$ on $V = VG$ and the hanging trees correspond to other vertices $w\in V$ that map into the orbit $\{f(v),\dots, f^{\ell_k}(v)\}$. Let $c_k = [\mathcal{V}_k] = [\mathcal{V}_k']\in H$ denote the homology class defined by the cycle in $\mathcal{V}_k$. 

\begin{lemma} \label{lem:detP}
We have $\det(I - \posgen\wt P) \doteq \prod_{i=1}^m \left(1-c_i \right)$.
\end{lemma}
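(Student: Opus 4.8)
The plan is to read off $\det(I - \posgen \wt P)$ from the block structure that the vertex map $f|_V \colon V \to V$ imposes on the matrix. Since $P$ is the matrix of the \emph{function} $f|_V$ — each of its columns carries a single entry, a $1$ — the matrix $\posgen \wt P$ is a weighted function matrix: its column indexed by $v \in V$ has exactly one nonzero entry, namely the unit $\posgen\, k_v \in H \subseteq \Z[H]$ placed in the row indexed by $f(v)$, where $k_v \in K$ is the deck transformation with $\wt f(\wt v) = k_v\, \wt{f(v)}$. Call $v \in V$ \emph{cyclic} if $f^n(v) = v$ for some $n \ge 1$, and let $T \subseteq V$ be the remaining non-cyclic vertices, equipped with the height $h(w) = \min\{N \ge 0 : f^N(w)\text{ is cyclic}\} \ge 1$. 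The cyclic vertices break up into the disjoint cycles of the permutation $f|_{\text{cyclic}}$, which are precisely the vertex cycles $\mathcal V_1', \dots, \mathcal V_m'$ of \Cref{sec:rel_hom}; moreover $f$ carries each $w \in T$ either to a cyclic vertex or to a vertex of $T$ of strictly smaller height.

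Next I would order $V$ by listing the cyclic vertices first (grouped by which $\mathcal V_k'$ they lie on) and then those of $T$ in order of increasing height. With respect to the resulting partition $V = \bigl(\bigsqcup_k \mathcal V_k'\bigr) \sqcup T$, the matrix $I - \posgen\wt P$ is block upper-triangular: the block with rows in $T$ and columns among cyclic vertices vanishes (as $f$ keeps cyclic vertices cyclic), the cyclic–cyclic block is itself block-diagonal over the $\mathcal V_k'$, and the $T$–$T$ block is triangular with unit diagonal (there is no fixed point of $f$ in $T$, and $f(w)\in T$ precedes $w$ in the height order), hence of determinant $1$. Writing the $k$-th cycle as $v_0 \to v_1 \to \cdots \to v_{\ell_k - 1} \to v_0$ with $f(v_j) = v_{j+1}$, the $k$-th diagonal block is the cyclic-shift matrix $\wt v_j \mapsto \posgen\, k_{v_j}\, \wt v_{j+1}$, and a one-line permutation expansion (only the identity and the full $\ell_k$-cycle contribute) gives $\det\bigl(I - (\posgen\wt P)|_{\mathcal V_k'}\bigr) = 1 - \prod_{j=0}^{\ell_k - 1}\posgen\, k_{v_j}$.

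Finally I would identify the monomial $\prod_{j}\posgen\, k_{v_j}$ with the homology class $c_k = [\mathcal V_k'] \in H$. To do this, lift the loop $\mathcal V_k'$ to $\wt X$ starting at $\wt v_0$ and traverse the base lifts of the vertical edges $e_{v_0}, \dots, e_{v_{\ell_k - 1}}$ in succession; since each $e_{v_j}$ runs from $\wt v_j$ to $\posgen\, k_{v_j}\, \wt v_{j+1}$ — the factor $\posgen = \dt^{-1}$ recording that vertical edges point one level up against the lifted semiflow — the lifted path terminates at $\bigl(\prod_j \posgen\, k_{v_j}\bigr)\wt v_0$, so $c_k = \prod_j \posgen\, k_{v_j}$. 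Assembling the pieces gives
\[
\det(I - \posgen\wt P) \;=\; \prod_{k=1}^m \det\bigl(I - (\posgen\wt P)|_{\mathcal V_k'}\bigr) \;=\; \prod_{k=1}^m \Bigl(1 - \prod_{j=0}^{\ell_k-1}\posgen\, k_{v_j}\Bigr) \;=\; \prod_{i=1}^m (1-c_i),
\]
which is the assertion — indeed on the nose, so certainly up to a unit. I expect the only real difficulty to be this last step: being sure that the product of weights around the cycle is $c_k$ and not $c_k^{-1}$ or $\dt^{\ell_k}\prod_j k_{v_j}$, which comes down to tracking the orientation convention on vertical edges and the relation between $\dt$, $\posgen$, and the lifted semiflow direction fixed in \Cref{sec:abelian_cover}. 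This is harmless for the statement in any case, since $1 - c_k^{-1} \doteq 1 - c_k$ in $\Z[H]$ and any sign ambiguity in the entries of $\wt P$ is likewise absorbed by $\doteq$; the block-triangularization itself is routine once the functional-digraph picture is in place.
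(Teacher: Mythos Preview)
Your proof is correct and follows essentially the same approach as the paper's: both exploit the functional-digraph structure of $f|_V$ to strip off the tree vertices (yielding a unit contribution) and reduce to the cyclic blocks, then identify each cycle determinant $1-\prod_j \posgen k_{v_j}$ with $1-c_k$ by lifting the loop $\mathcal V_k'$ to $\wt X$. The only organizational difference is that the paper first block-decomposes by the components $\mathcal V_1,\dots,\mathcal V_m$ and then reduces each $\mathcal V_i$ to its core cycle $\mathcal V_i'$ via a distance-to-cycle ordering, whereas you perform a single global ordering (all cyclic vertices first, then non-cyclic by height) to obtain the block upper-triangular form directly; the substance of the argument is the same.
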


In \S\ref{sec:vertex_poly}, this formula will be used to define the vertex polynomial.

\begin{proof}
The components $\mathcal{V}_1\sqcup\dots\sqcup\mathcal{V}_m$ determine a block diagonal decomposition of $I-\posgen \wt P$ into blocks $I-\posgen \wt P_i$ for $1\le i\le m$ and so it suffices to show that $\det(I-\posgen \wt P_i) = 1-c_i$. As mentioned above, $\mc V_i$ is an embedded loop $\mc V_i'$ along with trees hanging off. If we index the vertices so that they are ordered in a way that is compatible with their distance to $\mc V_i'$, then it follows that $\det(I-\posgen \wt P_i) = \det(I-\posgen \wt P_i')$, where $\wt P_i'$ is the submatrix of $\wt P_i$ associated to the vertices of $\mc V_i'$.

Now let $v$ be a vertex in $\mc V_i'$ 
and 
suppose that the vertical loop $\mc V_i'$ has the 
vertex sequence $v = v_0, v_1, \ldots, v_{\ell_k} = v$. 
If we lift the vertical loop $\mc V_i'$ to $\wt X$ starting at $\wt v$, we obtain a path from $\wt v$ to $c_i \wt v$. The vertex sequence of this lift is
\[
 \wt v = \wt v_0, g_1\wt v_1, \ldots, g_{\ell_k}\wt v_{\ell_k} = c_i \wt v,
 \]
 where $\wt v_i$ is the base lift of $v_i$ and $g_i \in H$. 
By our choice of lift $\wt f \colon \wt G_0 \to \wt G_0$ and splitting $H = K \oplus \langle \dt \rangle$, it follows that $g_{j+1}\wt v_{j+1} = \posgen\wt P(g_j \wt v_j)$. So if we set $x_j = g_j^{-1}g_{j+1}$, then
$x_j \wt v_{j+1} = \posgen\wt P(\wt v_j)$. With this, and the fact that the matrices $\wt P$ and $\wt P_i'$ agree on the base lifts of vertices in $\mc V_i'$, an easy computation now gives the needed equality
\[
\det(I-\posgen \wt P_i') = 1 - \prod_{j=0}^{\ell_k-1} x_j = 1-c_i.\qedhere
\] 
\end{proof}

Choose a maximal tree $T_k$ inside of each component $\mathcal{V}_k$; since $\mathcal{V}_k$ is a loop with trees hanging off, this amounts to deleting exactly one edge $\eta_k$ so that $\mathcal{V}_k = T_k\cup \eta_k$. The union $T_1\sqcup \dots \sqcup T_m$ is then a forest in $X$, and we extend this to a maximal tree $T\subset X$ by adding horizontal edges that connect distinct components. 
We then use $\mathcal{E}\setminus T$ to denote the set of edges of $X$ that are not contained in $T$ and note that $\abs{\mathcal{E}\setminus T} = \abs{E}+1$.

Let $\wt T$ be the full preimage of $T$ in $\wt X$. 
The $\Z[H]$--module of relative $1$--chains for the pair $(\wt X, \wt T)$ is identified with $C_1(\wt X, \wt T) \cong \Z[H]^{\mathcal{E}\setminus T}$ 
and the chain complex for relative homology becomes
\[
\begin{tikzcd}
C_2(\wt X, \wt T)\cong \Z[H]^{E}  \arrow[r,"A"] & C_1(\wt X,\wt T)\cong \Z[H]^{\mathcal{E}\setminus T}  \arrow[r,] & C_0(\wt X,\wt T) = 0
\end{tikzcd}
\] 
where $A$ is simply the $(\mathcal{E}\setminus T)\times E$ matrix obtained from $\wt \partial_2$ by deleting the rows corresponding to edges in $T$. It follows that  the $\Z[H]$--module $H_1(\wt X, \wt T)$ may be presented as
\begin{align}\label{eq:def_A}
\Z[H]^E \stackrel{A}{\longrightarrow} \Z[H]^{\mathcal{E}\setminus T} \longrightarrow H_1(\wt X, \wt T) \to 0.
\end{align}
The Alexander polynomial $\Delta_{\Gamma}\in \Z[H]$ is thus the gcd of the $E\times E$ minors of $A$. We emphasize that here and throughout all identifications are made using the base lifts fixed in \S\ref{sec:abelian_cover}.

Since $H$ acts freely on the connected components of $\wt T$, fixing a component  $\wt T'$ gives an isomorphism $H_0(\wt T) \cong \Z[H]$ as $\Z[H]$--modules. Let $\alpha\colon \Z[H]^{\mathcal{E}\setminus T}\to \Z[H]$ be the composition

\begin{align} \label{eq:def_alpha}
 \Z[H]^{\mathcal{E}\setminus T} \cong C_1(\wt X, \wt T)\longrightarrow H_1(\wt X,\wt T)\stackrel{\sigma}{\longrightarrow} H_0(\wt T) \cong \Z[H],
 \end{align}
where $\sigma$ is the connecting map for the long exact sequence of the pair $(\wt X, \wt T)$. Note that if $e$ is any edge of $\mathcal{E} \setminus T$ and $\wh e$ is its (nonbase) lift to $\wt X$ whose tail is contained in $\wt T'$, then $\alpha(\wh e) = g-1$, where $g$ is represented by the loop in $e \cup T$ crossing $e$ positively. From this, we see that the image of $\alpha$ is the usual augmentation ideal $\mc A \subset \mathbb{Z}[H]$.

\subsection{The fundamental relations and the main lemma}
For each $i = 1,\dots, m$, let $w_i\in C^0(\wt X)$ be the sum of the vertices of $\wt T_i'$, which is the lift of $T_i$ contained in the fixed component $\wt T'$. 
The coboundary of $w_i$ then has the form $\wt \delta^1(w_i) = \sum_{x\in \mathcal{E}} b_x^i \wt x$ for some ring elements $b_x^i\in \Z[H]$. Let us calculate these coefficients $b_x^i$ for $x\in \mathcal{E}$.

\emph{Vertical edges:} 
Recall the set of vertical edges $\mc V$ is a disjoint union  $\mc V_1 \sqcup \cdots \sqcup \mc V_m$. If $x \in \mc V_k$ for some $k \neq i$, then each lift $\hat x$ of $x$ is disjoint from $\wt T_i'$ and hence has $\wt \delta^1(w_i)(\hat x) = 0$; thus $b^i_x = 0$ in this case. If $x \in \mc V_i$, then it is either in $T_i$ or is equal to $\eta_i$. In the first case, there is exactly one lift $\hat x$ that intersects $\wt T_i'$, but this lift has \emph{both} its tip and tail in $w_i$ meaning $\wt \delta^1(w_i)(\hat x) = w_i(\wt \partial_1(\hat x)) = 0$. Hence $b_x^i = 0$ for each vertical edge $x$ in $T_i$.

For $x = \eta_i$, there are precisely two lifts incident on $\wt T_i'$.
Writing $\wt \eta_i$ for the base lift of $\eta_i$, there is one translate, say $g \wt \eta_i$, whose tip lies in $\wt T_i'$ 
and another whose tail lies in $\wt T_i'$. Recall that $c_i\in H$ is the homology class of the unique nontrivial loop in $T\cup \eta_i$ that crosses $\eta_i$ once positively. Lifting this loop to $\wt X$, we see that the tip of $g\wt \eta_i$ lying in $\wt T_i'$ implies  the tail of $g \wt \eta_i$ lies in $c_i\inv \wt T'$. Thus $c_i g \wt \eta_i$ is the unique lift of $\eta_i$ whose tail lies in $\wt T_i'$. Since $\wt \delta^1(w_i)$ evaluates to $1$ on $g \wt \eta_i$ and to $-1$ on $c_ig \wt \eta_i$, we conclude that the coefficient of $\wt \eta_i$ in $\wt \delta^1(w_i)$ is $b_{\eta_i}^i = g(1-c_i)$. Notice that by definition of $\alpha\colon C_1(\wt X,\wt T)\to H_0(\wt T)\cong \Z[H]$ we have $\alpha(g \wt \eta_i) =1 - c_i\inv$ since the tip and tail of $g \wt \eta_i$ lie in $\wt T'$ and $c_i\inv \wt T'$. Therefore we have $b_{\eta_i}^i = -g^2 c_i\inv\alpha(\wt \eta_i) =  -g_i \alpha(\wt \eta_i)$ for some unit $g_i\in \Z[H]$.

\emph{Horizontal edges:} First let $x$ be a horizontal edge in the maximal tree $T$. If both end points both of $x$ lie outside $T_i$, then $b^i_x = 0$ since each lift $\hat x$ of $x$ is disjoint from $\wt T_i'$ and hence has $\wt \delta^1(w_i)(\hat x) = 0$. If instead $x$ has exactly one end point in $T_i$, 
then $x$ has exactly one lift $\hat x$ that intersects $\wt T_i'$, and this lift has only one endpoint in $\wt T_i'$; hence $b_x^i\in \Z[H]$ is a unit. Let $D_i$ denote the set of horizontal edges in $T$ that have exactly one end point in $T_i$. Finally, the nature of the coefficient $b_x^i$ for $x\in E\setminus T$ will not be relevant for us.

Combining these observations, we see that
\[\wt \delta^1(w_i) = -g_i \alpha(\wt \eta_i)  \wt \eta_i + \sum_{x\in D_i \cup (E\setminus T)} b_x^i \wt x\]
where $b_x^i$ is a unit in $\Z[H]$ for each $x$ in $D_i$ and $g_i$ is also a unit. The Observation \eqref{eqn:vertex-row-relation} therefore gives us a relation:
\begin{equation}
\label{eqn:fund_relation_for_nontree_vert_edges}
g_i \alpha(\wt \eta_i) R_{\eta_i} =  \sum_{x\in D_i \cup E\setminus T} b_x^i R_{x} \quad\text{ for each }i = 1,\dots, m.
\end{equation}
That is: For each vertical edge $\eta_i$ in $\mathcal{E}\setminus T$, multiplying the corresponding row $R_{\eta_i}$ of $A$ by $g_i \alpha(\wt \eta_i)$ produces a linear combination of the rows $R_x$ for horizontal $x$ in $\mathcal{E}\setminus T$ plus a linear combination \emph{with unit coefficients} of the rows $R_x$ of $\wt\partial_2$ corresponding to $x$ in $D_i$. 

\begin{lemma} \label{lem:dets}
Let $A_k$ denote the matrix obtained from $A$ (from equation (\ref{eq:def_A})) by removing the row for the vertical edge $\eta_k\in \mathcal{E}\setminus T$. Then $\det(A_k)\det(I-\posgen \wt P) \doteq \alpha(\wt \eta_k)\det(I - \posgen \wt M)$.
\end{lemma}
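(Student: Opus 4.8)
**Proof plan for Lemma 3.17 (i.e. the $\det(A_k)\det(I-\bar z\widetilde P)\doteq\alpha(\widetilde\eta_k)\det(I-\bar z\widetilde M)$ statement).**

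The plan is to compute $\det(A_k)$ by row operations that exploit the fundamental relations \eqref{eqn:fund_relation_for_nontree_vert_edges}, and then recognize what remains as $\det(I-\bar z\widetilde M)$ up to the scalar factor $\alpha(\widetilde\eta_k)$ and a contribution of $\det(I-\bar z\widetilde P)$ coming from the vertical/forest edges. Recall $A$ is the $(\mathcal E\setminus T)\times E$ matrix obtained from $\widetilde\partial_2$ by deleting the rows for edges of $T$; its rows are the $R_x$ for $x\in\mathcal E\setminus T$, and $|\mathcal E\setminus T|=|E|+1$. Removing the row $R_{\eta_k}$ leaves the square $E\times E$ matrix $A_k$ whose rows are indexed by the horizontal edges $E\setminus T$ together with the remaining $m-1$ vertical edges $\eta_i$, $i\ne k$.

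First I would organize the rows of $A_k$ into three groups: (a) the rows $R_x$ for horizontal $x\in E\setminus T$; (b) the rows $R_{\eta_i}$ for vertical $\eta_i$, $i\ne k$; (c) conceptually, the rows $R_x$ for horizontal $x\in T$, which are \emph{not} in $A_k$ but appear on the right-hand side of \eqref{eqn:fund_relation_for_nontree_vert_edges} via the sets $D_i$. The key point is that $\widetilde\partial_2$ has the block form $\begin{pmatrix} I-\bar z\widetilde M\\ \widetilde B\end{pmatrix}$, where the top $E\times E$ block (rows indexed by horizontal edges) is $I-\bar z\widetilde M$ and the bottom block $\widetilde B$ (rows indexed by vertical edges) is a $\Z[K]$-valued matrix recording $\widetilde\partial_1$ restricted to horizontal edges. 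So the rows $R_x$ for \emph{all} horizontal edges $x\in E$ together form exactly the matrix $I-\bar z\widetilde M$, while every vertical-edge row $R_{\eta}$ comes from $\widetilde B$ and hence lies in the $\Z[K]$-span (indeed is $\Z[H]$-linear-combination-accessible) of... — actually the cleaner statement is: for a vertical edge $e_v$, $R_{e_v}=\widetilde\delta^2(\widetilde e_v)$ records which $2$-cells have $e_v$ on their boundary, and the relation $\pi_V\partial_2=\partial_1|_E$ from \S3.3 tells us these rows are controlled by $\widetilde B$. Using \eqref{eqn:fund_relation_for_nontree_vert_edges}, I replace each row $R_{\eta_i}$ ($i\ne k$) in $A_k$ by $g_i\alpha(\widetilde\eta_i)R_{\eta_i}$, which is a combination of the horizontal rows $R_x$ ($x\in E\setminus T$, already in $A_k$) plus \emph{unit-coefficient} combinations of the rows $R_x$ for $x\in D_i\subset T$. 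Thus, after multiplying the determinant by $\prod_{i\ne k} g_i\alpha(\widetilde\eta_i)$, the matrix becomes one whose $(m-1)$ "vertical" rows are honest rows of $\widetilde\partial_2$ lying outside $A_k$ (living in the $\widetilde B$-block), together with the $|E|-(m-1)$ horizontal rows from $E\setminus T$.

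Next I would assemble the full $(|E|+m)\times E$ "doubled" matrix consisting of all the rows of $\widetilde\partial_2$, i.e. all $R_x$ for $x\in\mathcal E=E\sqcup V$, and argue via a Laplace/Cauchy–Binet style expansion (or by directly building a square matrix and taking its determinant) that the $E\times E$ minor picked out above equals, up to a unit, the product $\alpha(\widetilde\eta_k)\det(I-\bar z\widetilde M)$ divided by $\det(I-\bar z\widetilde P)$ — equivalently, that $\det(A_k)\det(I-\bar z\widetilde P)\doteq\alpha(\widetilde\eta_k)\det(I-\bar z\widetilde M)$. The factor $\det(I-\bar z\widetilde P)$ enters exactly because, to go from the horizontal block $I-\bar z\widetilde M$ to the relevant minor, one must account for which horizontal edges of the maximal tree $T$ got "used up" — and by \Cref{lem:detP}, $\det(I-\bar z\widetilde P)\doteq\prod_i(1-c_i)$, with the $(1-c_i)$ factors matching the $\alpha(\widetilde\eta_i)$ contributions (up to units) for $i\ne k$, leaving a single unmatched $\alpha(\widetilde\eta_k)$; here one uses the computation from \S3.8 that $\alpha(g\widetilde\eta_i)=1-c_i^{-1}$, so $\alpha(\widetilde\eta_i)\doteq 1-c_i$. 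Concretely I would: (i) form the square matrix $N$ obtained by stacking $I-\bar z\widetilde M$ ($E$ rows, horizontal) on top of the $m$ vertical rows coming from $\widetilde B$ and then \emph{deleting} the $m$ columns/rows appropriately so it is square — no, better: use the long exact sequence / the identity $\widetilde\partial_1\widetilde\partial_2=0$ to express the tree-edge rows of $\widetilde\partial_2$ in terms of vertical ones, making the bookkeeping exact. Then expand $\det$ along the vertical rows.

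The main obstacle I anticipate is the precise bookkeeping in the determinant expansion: tracking how the $m-1$ vertical rows $R_{\eta_i}$ ($i\ne k$), after the row operations, consume $m-1$ columns' worth of information in a way that exactly produces the factor $\prod_{i\ne k}(1-c_i)\doteq\det(I-\bar z\widetilde P)/(1-c_k)$ rather than $\prod_{i}(1-c_i)$, and verifying that the "unit coefficient" combinations over $D_i$ contribute only units (not extra polynomial factors) to the determinant. This requires knowing that the graph-theoretic structure — $T_i$ is a loop with trees hanging off, $D_i$ connects $T_i$ to the rest of $T$ — makes the contribution of the tree edges to the minor triangular with unit diagonal entries, so that they can be cleared without cost. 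Getting the signs/units to line up and confirming that the single surviving $\alpha(\widetilde\eta_k)$ is exactly the claimed one (and not, say, $\alpha(\widetilde\eta_k)/(1-c_k)$ times something) is the delicate part; I would handle it by carefully choosing the maximal tree $T$ as in \S3.7 and doing the expansion component-by-component over the connected pieces $\mathcal V_k$, reducing to the single-component computation already carried out in the proof of \Cref{lem:detP}.
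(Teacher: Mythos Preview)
Your overall strategy matches the paper's: for each $i\ne k$ multiply the $\eta_i$-row of $A_k$ by $g_i\alpha(\wt\eta_i)$, invoke \eqref{eqn:fund_relation_for_nontree_vert_edges}, and subtract off the $R_x$ for $x\in E\setminus T$ (which are already rows of $A_k$). This produces a matrix $B$ whose former $\eta_i$-row is $\sum_{x\in D_i} b_x^i R_x$ and which satisfies $\alpha(\wt\eta_k)\det(B)\doteq\det(I-\bar z\,\wt P)\det(A_k)$; it then remains to show $\det(B)\doteq\det(I-\bar z\,\wt M)$. One correction: the rows $R_x$ for $x\in D_i$ live in the $I-\bar z\,\wt M$ block (the $D_i$ are \emph{horizontal} tree edges), not in the $\wt B$-block as you write. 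Your Cauchy--Binet / ``doubled matrix'' paragraph is a detour the paper does not take, and it is not clear how to make it precise.

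The genuine gap is exactly the step you flag. Your ``triangular with unit diagonal'' intuition is right, but ``component-by-component over the $\mathcal V_j$'' does not organize it, because each horizontal tree edge lies in $D_i\cap D_j$ for two indices $i,j$, so the $D_i$ overlap and the order in which rows become available matters. The paper's mechanism is a partial order on the set $\Lambda$ of horizontal tree edges: declare $e\le e'$ iff the tree path in $T$ from $e'$ to $T_k$ passes through $e$. Each $D_i$ then has a unique $\le$-minimal element $y_i$ (the edge of $D_i$ nearest $T_k$), and $i\mapsto y_i$ gives a bijection $\{1,\dots,m\}\setminus\{k\}\to\Lambda$. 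One now inductively swaps rows, running over ``$\le$-compatible'' partitions $\Lambda=\Lambda_1\sqcup\Lambda_2$: at each step pick a $\le$-minimal $y_i\in\Lambda_2$, note that every other $x\in D_i$ satisfies $y_i<x$ and hence already sits in $\Lambda_2$ (so the row $R_x$ is already present in the current matrix), subtract those off to reduce the $\eta_i$-row to the unit multiple $b_{y_i}^i R_{y_i}$, and move $y_i$ into $\Lambda_1$. This is the missing bookkeeping that turns your intuition into a proof.
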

\begin{proof}
We need to relate the determinant of $A_k$ to that of $I - \posgen \wt M$, which is the submatrix of $\wt{\partial_2}$ corresponding to the horizontal edges $E$. The matrix $A_k$ already contains all rows $R_x$ of $\wt{\partial_2}$ corresponding to horizontal edges $x$ in $E\setminus T$. However, $A_k$ is missing the rows corresponding to horizontal edges in $T$ and in their place has extra rows corresponding to vertical edges of $\mathcal{E}\setminus T$ of the form $\eta_i$ except $\eta_k$. We will recursively use the fundamental relation \eqref{eqn:fund_relation_for_nontree_vert_edges} to replace each row for a vertical edge of $A_k$ with the row for a missing horizontal edge.

Consider $\det(I-\posgen \wt P)A_k$. By ~\Cref{lem:detP} we have $\det(I- \posgen \wt P) \doteq \prod_{i=1}^m (1-c_i) = \prod_{i=1}^m(g_i\alpha(\wt \eta_i))$. For each $i\ne k$, let us multiply the $\eta_i$--row of $A_k$ by the $g_i\alpha(\wt \eta_i)$ factor from $\det(I-\posgen \wt P)$. This produces a matrix $B'$ whose $\eta_i$--row is the linear combination $\sum_{x\in D_i \cup (E\setminus T)} b_x^i R_x$. Since $B'$ contains the row $R_x$ for each $x$ in $E\setminus T$, we may perform row operations to kill the terms $b_x^iR_x$ with $x\in E\setminus T$ in the $\eta_i$--row of $B'$. These operations yield a new matrix $B$ whose row for $x\in E\setminus T$ is $R_x$ and whose row for $\eta_i$ with $i\ne k$ is $\sum_{x\in D_i} b_x^i R_x$. Since these row operations do not change determinants, we further see that
\[\alpha(\wt\eta_k)\det(B) \doteq \det(I-\posgen \wt P) \det (A_k).\]

It thus remains to show that $\det(B) \doteq \det(I-\posgen \wt M)$. For this, let $\Lambda$ denote the set of horizontal edges of $T$, that is, $\Lambda = D_1 \cup \cdots \cup D_m$.
Impose a partial order on $\Lambda$ defined by $e \le e'$ if the path in the tree $T$ from $e'$ to $T_k$ passes through $e$. For each $i\ne k$, there is a unique tree path $\gamma$ from $T_i$ to $T_k$. This path $\gamma$ crosses exactly one edge of $D_i$, which we denote $y_i \in D_i$. Since the path from any other $x$ in $D_i$ to $T_k$ must follow $\gamma$ and thus pass through $y_i$, we see that $y_i \le x$ for all $x\in D_i$; hence $y_i$ is the minimal element of $D_i$. Conversely, each horizontal edge in $T$ is equal to the minimal edge $y_i \in D_i$ for a unique index $i\ne k$. Thus we have a bijection between $\Lambda$ and $\{1,\dots,m \}\setminus\{k\}$, so that $\Lambda = \{y_i \mid i \ne k\}$.

Say that a partition $\Lambda = \Lambda_1\sqcup \Lambda_2$ is $\le$--compatible if there do not exist $\lambda_j\in \Lambda_j$ such that $\lambda_2 \le \lambda_1$; that is, if for every pair $(\lambda_1,\lambda_2)\in \Lambda_1\times \Lambda_2$ either $\lambda_1 \le \lambda_2$ or else the pair is not ordered. Given such a partition let $B(\Lambda_1\sqcup \Lambda_2)$ be the matrix obtained from $B$ as follows: for each $i\ne k$ with $y_i \in \Lambda_2$, replace the row $\sum_{x\in D_i} b_x^i R_x$ in $B$ corresponding to $\eta_i$ with simply the row $R_{y_i}$ of $I-\posgen \wt M$ corresponding to $y_i\in D_i$. Clearly we have $B(\Lambda\sqcup \emptyset) = B$ and $B(\emptyset \sqcup \Lambda) = I-\posgen \wt M$. 

We claim by induction that $\det(B(\Lambda_1\sqcup \Lambda_2)) \doteq \det(B)$ for every $\le$--compatible partition $\Lambda = \Lambda_1\sqcup \Lambda_2$. This is immediate for the trivial partition $\Lambda\sqcup \emptyset$. Now let $\Lambda_1\sqcup \Lambda_2$ be any $\le$--compatible partition. If $\Lambda_2\ne \emptyset$, we may choose a $\le$--minimal element $\lambda\in \Lambda_2$. The adjusted partition $\Lambda_1' \sqcup \Lambda_2' = (\Lambda_1\cup\{\lambda\})\sqcup(\Lambda_2\setminus\{\lambda\})$ is then $\le$--compatible, and by induction we may assume $\det(B(\Lambda_1'\sqcup \Lambda_2')) \doteq \det(B)$. Let $i\ne k$ be the index so that $\lambda = y_i$. The matrices $B(\Lambda_1\sqcup \Lambda_2)$ and $B(\Lambda_1'\sqcup \Lambda_2')$ then only differ in the row corresponding to $\eta_i$, which in the latter is $\sum_{x\in D_i} b_x^i R_x$. Consider any edge $x\in D_i$ with $x\ne y_i$. Then by definition of $y_i$ we have $y_i \le x$. Therefore, the fact that $\Lambda_1\sqcup \Lambda_2$ is $\le$--compatible and $y_i\in \Lambda_2$ implies we cannot have $x\in \Lambda_1$. Hence $x\in \Lambda_2$ and, since $x\ne y_i = \lambda$, furthermore $x\in \Lambda_2'$. This means $B(\Lambda_1'\sqcup \Lambda_2')$ contains the row $R_x$ for each $x\in D_i$ with $x\ne y_i$. Thus we may perform a row operation on $B(\Lambda_1'\sqcup \Lambda_2')$ that uses the row $R_x$ to kill the $b_x^i R_x$ term in the $\eta_i$--row of $B(\Lambda_1'\sqcup \Lambda_2')$. Applying these operations for each $x\in D_i\setminus\{y_i\}$ produces a new matrix whose $\eta_i$--row is $b_{y_i}^i R_{y_i}$. Multiplying this row by the unit $(b_{y_i}^i)\inv$ then yields $B(\Lambda_1\sqcup \Lambda_2)$. Thus we have transformed $B(\Lambda_1'\sqcup \Lambda_2')$ into $B(\Lambda_1\sqcup \Lambda_2)$ via row operations that only effect determinants up to a unit. This proves the claim and, in particular, that $\det(I-\posgen \wt M) = \det(B)$ up to a unit in $\Z[H]$, as needed.
\end{proof}

\subsection{The determinant formula}
We will make use of the following lemma which can be found in work of Hironaka \cite[Lemma 8]{hironaka2011computing}. The argument also appears in the proof of \cite[Theorem 5.1]{McMullen--Alexander_polynomial}.
\begin{lemma}\label{lem:eko}
Suppose 
\[ \mathbb{Z}[H]^{n} \overset{A}{\longrightarrow} \mathbb{Z}[H]^{n+1} \overset{\alpha}{\longrightarrow} \mathbb{Z}[H], \]
is a sequence of $\mathbb{Z}[H]$--modules such that $\alpha \circ A = 0$. Let $\Delta_i$ be the determinant of the matrix obtained from $A$ after removing the $i$th row. Then
\begin{align}\label{eq:eko}
\alpha(e_i) \Delta_j = \pm \alpha(e_j) \Delta_i,
\end{align}
for all $1\le i,j \le n+1$. Here $\{e_i: 1 \le i \le n+1\}$ is the standard free basis of $\mathbb{Z}[H]^{n+1}$.
\end{lemma}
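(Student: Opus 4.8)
The plan is to exhibit two elements of the left kernel of $A$ and then play them off each other. Since $H$ is free abelian, $\Z[H]$ is an integral domain and embeds in its field of fractions $F$; because \eqref{eq:eko} is an identity of elements of $\Z[H]\subset F$, it suffices to prove it over $F$, where rank arguments are available. View $A$ as an $(n+1)\times n$ matrix over $F$. The hypothesis $\alpha\circ A = 0$ says exactly that the row vector $v = (\alpha(e_1),\dots,\alpha(e_{n+1}))$ satisfies $vA = 0$, i.e.\ $v$ lies in the left kernel of $A$.

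Next I would check that the vector of signed maximal minors $w = \bigl((-1)^{1}\Delta_1,\dots,(-1)^{n+1}\Delta_{n+1}\bigr)$ also satisfies $wA = 0$. For each column $a$ of $A$, appending $a$ to $A$ produces an $(n+1)\times(n+1)$ matrix with two equal columns, hence vanishing determinant; Laplace expansion of that determinant along the appended column rewrites it (up to an overall sign) as $w\cdot a$, so $w\cdot a = 0$ for every column $a$ of $A$, which is precisely $wA = 0$. Thus both $v$ and $w$ lie in the left kernel of $A$.

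It then remains to conclude by cases on $\operatorname{rank}_F A$. If the rank is $<n$, every $n\times n$ minor of $A$ vanishes, so $\Delta_i = 0$ for all $i$ and \eqref{eq:eko} reads $0=0$. If the rank equals $n$, the left kernel of $A$ is one-dimensional; if $w=0$ we are again in the trivial case, and otherwise $w$ spans the kernel, so $v = \mu w$ for some $\mu\in F$, meaning $\alpha(e_i) = (-1)^i\mu\,\Delta_i$ for every $i$. Multiplying then gives $\alpha(e_i)\Delta_j = (-1)^i\mu\,\Delta_i\Delta_j = (-1)^{i-j}\alpha(e_j)\Delta_i = \pm\,\alpha(e_j)\Delta_i$, which is \eqref{eq:eko}. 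There is no genuine obstacle here beyond bookkeeping: the one point to handle with care is the passage to the fraction field, which is what legitimizes the rank dichotomy and the one-dimensionality of the kernel; the remaining content is the computation underlying Cramer's rule. An alternative route, arguing directly over $\Z[H]$ by expanding an auxiliary $(n+1)\times(n+1)$ determinant, appears in \cite{hironaka2011computing} and in the proof of \cite[Theorem 5.1]{McMullen--Alexander_polynomial}.
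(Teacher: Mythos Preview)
Your proof is correct. The paper does not actually supply its own proof of this lemma; it simply cites \cite[Lemma 8]{hironaka2011computing} and the proof of \cite[Theorem 5.1]{McMullen--Alexander_polynomial}, the same references you mention at the end. Your argument via passage to the fraction field and the rank dichotomy is a clean self-contained route: the key observations---that $(\alpha(e_i))_i$ and $((-1)^i\Delta_i)_i$ both lie in the left kernel of $A$, and that this kernel is at most one-dimensional over $F$ when some $\Delta_i\ne 0$---are exactly what is needed. One tiny remark: in the rank-$n$ case the alternative ``$w=0$'' is in fact impossible (rank $n$ forces some $n\times n$ minor, hence some $\Delta_i$, to be nonzero), so that clause is vacuous, but this does no harm.
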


We are now ready to prove the determinant formula for the Alexander polynomial: 
\begin{introdet*}[Determinant formula for $\Delta_{\Gamma}$]
Let $f \colon G \to G$ be a graph map whose mapping torus $X$ has fundamental group $\Gamma$. 
Then
\[
\Delta_\Gamma \doteq \frac{\det(\dt I -\wt M)}{\det(\dt I-\wt P)} \cdot r,
\]
where $r=1\in \Z[H]$ if $\mathrm{rank}(H_1(X, \R)) \ge 2$ and $r = (\dt-1)$ 
if $\mathrm{rank}(H_1(X, \R)) =1$.
\end{introdet*}

\begin{proof}[Proof of \Cref{th:det_form}]
Recall that $|\mc E \setminus T| = |E| +1$ and, for $1 \le i \le |E|+1$,
let $\Delta_i$ be the minor of $A$ (from equation (\ref{eq:def_A})) obtained by removing the $i$th row.
Then the Alexander polynomial is by definition
\[
\Delta_{\Gamma} = \mathrm{gcd}\{\Delta_i : 1\le i\le |E|+1 \}.
\]
With $\alpha\colon \Z[H]^{\mathcal{E}\setminus T}\to \Z[H]$ from equation (\ref{eq:def_alpha}),
we note that $\alpha \circ A =0$ and so we apply \Cref{lem:eko} to conclude that 
\[
\alpha(\wt e_i) \Delta_j = \pm \alpha(\wt e_j) \Delta_i,
\]
where $\{\wt e_i \}$ are the base lifts of the edges of $\mc E \setminus T$. Fix some index, say $k$, representing a vertical edge $\eta_k$ of $\mc E \setminus T$ so that $\wt e_k = \wt \eta_k$ as in \Cref{lem:dets}. Then
\begin{align*}
\alpha(\wt \eta_k) \Delta_{\Gamma} &= \mathrm{gcd}\{\alpha(\wt \eta_k)\Delta_i : 1\le i\le |E|+1 \}\\
& = \mathrm{gcd}\{\alpha(\wt e_i): 1\le i\le |E|+1 \}  \cdot \Delta_k.
\end{align*}

But according to \Cref{lem:dets}, we know that,
\[
\Delta_k \det(I-\posgen \wt P) \doteq \alpha(\wt \eta_k)\det(I - \posgen \wt M).
\]
After rearranging, we conclude that
\[
\Delta_{\Gamma} \doteq \frac{\det(\dt I -  \wt M)}{\det(\dt I- \wt P)} \cdot \mathrm{gcd}\{\alpha(\wt e_i): 1\le i\le |E|+1 \},
\]
and so it suffices to show that $\mathrm{gcd}\{\alpha(\wt e_i): 1\le i\le |E|+1 \} = 1$ if $\mathrm{rank}(H) \ge 2$ and $\mathrm{gcd}\{\alpha(\wt e_i): 1\le i\le |E|+1 \} = \dt-1$ if $H = \langle \dt \rangle$. 

For this we recall the following: if $e_i$ is any edge of $\mathcal{E} \setminus T$ and $\wh e_i$ is its (nonbase) lift to $\wt X$ whose tail is contained in $\wt T'$, then $\alpha(\wh e_i) = g_i-1$, where $g_i$ is represented by the loop in $e \cup T$ crossing $e$ positively. Since $\wh e_i$ is a translate of the base lift $\wt e_i$ we have that, up to a unit, $\alpha(\wt e_i) = g_i-1$. Moreover, since the $\{g_i\}$ correspond to edges of $X^{(1)}$ outside of the maximal tree $T$, we note that $H = \langle g_1, \ldots, g_{|\mathcal{E} \setminus T|} \rangle$.

If $\mathrm{rank}(H) \ge 2$, then there are $g_i$ and $g_j$ that generate a rank $2$ subgroup of $H$ and hence $(g_i-1)$ and $(g_j-1)$ are relatively prime. In particular, $\mathrm{gcd}\{\alpha(\wt e_i)\} =\mathrm{gcd}\{(g_i-1)\} =1$, as required. 
If $\mathrm{rank}(H) = 1$, then $g_i = \posgen^{k_i}$ and since the $g_i$ generate $H$, in fact $\mathrm{gcd}\{k_i\} = 1$. Hence, we also have that $\mathrm{gcd}\{(\posgen^{k_i}-1)\} = (\posgen-1)$ and so $\mathrm{gcd}\{\alpha(\wt e_i)\} = (\dt-1)$. This completes the proof. 
\end{proof}

\subsection{Characteristic polynomials of monodromy}
The following corollary shows that specialization of the Alexander polynomial gives the characteristic polynomial of the monodromy's action on homology, up to replacing the indeterminate by its inverse. Similar statements appear in the literature where the characteristic polynomial considered is that of the deck transformation on the homology of the associated cyclic cover. See e.g.  Milnor \cite[Assertion 4]{milnor1968infinite}.

Recall that $f_* \colon H_1(G; \mathbb{Z}) \to H_1(G;\mathbb{Z})$ is the induced map on first homology of $G$.

\begin{corollary}\label{lem:spec_char_poly}
Let $X$ be the mapping torus of the graph map $f \colon G \to G$
and let $u$ be its dual class.
Then
\[
\det(t\inv I - f_*) \doteq (1-t)^p \cdot \Delta_{\Gamma}^u(t),
\]
where $p=1$ if $\mathrm{rank}(H_1(X))\ge 2$ and $p=0$ if $\mathrm{rank}(H_1(X)) =1$.
\end{corollary}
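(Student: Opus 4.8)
The plan is to deduce this from the determinant formula of \Cref{th:det_form} by specializing at $u$ and performing a short homological computation. Recall that the dual class $u$ vanishes on $K$, the image of $\pi_1(G)\to H$, and that $u(\dt)=-1$, since $\dt$ was chosen in \S\ref{sec:abelian_cover} to map to $-1$ under $\pi_1(X)\to\pi_1(S^1)$. As specialization at $u$ is a ring homomorphism $\Z[H]\to\Z[t,t^{-1}]$, it commutes with determinants; and since $\wt M$, $\wt P$ specialize to the integer matrices $M$, $P$ under $\Z[K]\to\Z$, applying $u$-specialization to the identity of \Cref{th:det_form} yields
\[
\Delta_\Gamma^u(t)\cdot\det(t^{-1}I-P)\ \doteq\ \det(t^{-1}I-M)\cdot r^u(t),
\]
where $r^u(t)=1$ if $\mathrm{rank}(H_1(X))\ge 2$ and $r^u(t)=t^{-1}-1$ if $\mathrm{rank}(H_1(X))=1$.

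The key step is to rewrite the ratio $\det(sI-M)/\det(sI-P)$ (for a formal variable $s$) in terms of the action of $f$ on $H_1(G)$. The graph map $f$ acts on the simplicial chain complex $0\to C_1(G)\xrightarrow{\partial}C_0(G)\to 0$ of the connected graph $G$, giving rise to two $f$-equivariant short exact sequences
\[
0\to H_1(G)\to C_1(G)\xrightarrow{\partial}B_0(G)\to 0
\qquad\text{and}\qquad
0\to B_0(G)\to C_0(G)\to H_0(G)\to 0,
\]
with $B_0(G)=\mathrm{im}(\partial)$. As characteristic polynomials are multiplicative in short exact sequences, and $f$ acts as the identity on $H_0(G)\cong\Z$ (because $G$ is connected), cancelling the common $\det(sI-f_*|_{B_0(G)})$ factor gives the polynomial identity $\det(sI-M)\cdot(s-1)\doteq\det(sI-f_*)\cdot\det(sI-P)$, where $f_*$ now denotes the action on $H_1(G)$. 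Substituting $s=t^{-1}$, combining with the previous display, and cancelling the nonzero factor $\det(t^{-1}I-P)$ (the reciprocal characteristic polynomial of $P$), we obtain
\[
\Delta_\Gamma^u(t)\cdot(t^{-1}-1)\ \doteq\ \det(t^{-1}I-f_*)\cdot r^u(t).
\]

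Finally, I would unwind the two cases using that $t^{-1}$ is a unit, so $t^{-1}-1=t^{-1}(1-t)\doteq 1-t$. When $\mathrm{rank}(H_1(X))\ge 2$, then $r^u(t)=1$, so $\det(t^{-1}I-f_*)\doteq(1-t)\,\Delta_\Gamma^u(t)$, the claim with $p=1$. When $\mathrm{rank}(H_1(X))=1$, then $r^u(t)=t^{-1}-1$ cancels the left-hand factor and leaves $\det(t^{-1}I-f_*)\doteq\Delta_\Gamma^u(t)$, the claim with $p=0$. I do not anticipate a real obstacle, as all the substance sits in \Cref{th:det_form}; the one point requiring care is the middle step — carrying out the cancellation of the $B_0(G)$-contribution at the level of honest polynomial factorizations and confirming that the leftover factor $(s-1)$ from $H_0(G)$ is precisely what accounts for the $(1-t)^p$ discrepancy, with the factor $r=\dt-1$ of \Cref{th:det_form} exactly absorbing it when $\mathrm{rank}(H_1(X))=1$.
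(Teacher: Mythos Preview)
Your proof is correct and follows essentially the same approach as the paper: both specialize the determinant formula of \Cref{th:det_form} at $u$ and combine it with the characteristic-polynomial identity $(s-1)\det(sI-M)=\det(sI-f_*)\det(sI-P)$ coming from the exact sequence $0\to H_1(G)\to C_1(G)\to C_0(G)\to\Z\to 0$. The only cosmetic difference is that the paper writes this as a single four-term exact sequence rather than splitting it into your two short exact sequences through $B_0(G)$, and reverses the order of the two steps.
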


\begin{proof}
As before, let $M$ represent the action of $f$ on $1$--chains $\mathbb{Z}^E$ and  $P \colon \mathbb{Z}^V \to \mathbb{Z}^V$ denote the action of $f$ on $0$--chains. 

Let $\partial \colon \mathbb{Z}^E \to \mathbb{Z}^V$ be the usual boundary map on the oriented edges, and let $\alpha  \colon \mathbb{Z}^V \to \mathbb{Z}$ be the augmentation map defined by $\sum a_v v \mapsto \sum a_v$.
Then there is a diagram with exact rows:

\[
\begin{CD}
     0@>>>   H_1(G)  @>>>  \mathbb{Z}^E @>\partial >>  \mathbb{Z}^V @>\alpha>> \mathbb{Z} @>>> 0 \\
       @.      @VV f_* V     @VV M V                   @VV P V      @VV 1 V           @.     \\
       0@>>>   H_1(G)  @>>>  \mathbb{Z}^E @>\partial >>  \mathbb{Z}^V @>\epsilon>> \mathbb{Z} @>>> 0
\end{CD}
\]

From this, we have the equation of characteristic polynomials (in the variable $t\inv$), 
\[
(t\inv-1) \cdot \det(t\inv I - M) = \det(t\inv I - f_*)  \cdot \det(t\inv I- P).
\]

The proof is completed by applying \Cref{th:det_form} and noting that the $u$--specializations of $\det(\dt I-\wt M)$, $\det(\dt I-\wt P)$, and $\Delta_\Gamma$ are $\det(t\inv I - M)$, $\det(t\inv I- P)$, and $\Delta^u_\Gamma(\dt)$, respectively (since $u(\dt) = -1$). 
\qedhere
\end{proof}

\begin{remark}
\label{rmk:char_pol}
\Cref{lem:spec_char_poly} has the following immediate consequence: If graph maps $f \colon G \to G$ and $f' \colon G' \to G'$ produce equivalent mapping tori, in the sense that there is an isomorphism $\phi \colon \pi_1(X_f) \to \pi_1(X_{f'})$ for which the dual classes satisfy $u' \circ \phi = u$, then the characteristic polynomials of $f_*$ and $f'_*$ are the same, up to a unit in $\mathbb{Z}[t^{\pm1}]$. In particular, the homological stretch factors of $f$ and $f'$ agree.
\end{remark}

\section{The folded mapping torus, cross sections, and the McMullen polynomial}\label{sec:fmt}
Throughout this section, we fix a fully irreducible automorphism $\varphi$ of $\free$ and an irreducible train track map $f \colon G \to G$ representing $\varphi$. As in \S\ref{sec:mapping_tori}, this determines mapping torus $X = X_f$ equipped with a semiflow $\flow_t\colon X\to X$, coming from the local upward flow on $G\times[0,1]$, whose fundamental group $\Gamma$ is the free-by-cyclic group determined by $\varphi$. As before, we also set $H = H_1(X)/\text{torsion}$.
In this section, we briefly recall the main constructions and required results from \cite{DKL1,DKL2}.

\subsection{The folded mapping torus and its cross sections}
\label{sec:folded}
To construct cross sections representing different splittings of $\Gamma$, Dowdall--Kapovich--Leininger \cite{DKL1} constructed a modified mapping torus equipped with its own semiflow.  Given a fixed factorization of the graph map $f \colon G \to G$ into a sequence of Stallings folds, the authors produce a $2$-complex $X^\dagger = X^\dagger_f$ which they call the \emph{folded mapping torus}. 
Just as for the mapping torus, $X^\dagger$ comes equipped with a semiflow $\flow^\dagger_t\colon X^\dagger\to X^\dagger$ and the two semiflows are related by a natural flow-equivariant quotient map $q\colon X\to X^\dagger$ with the property that the set of vertex leaves of $X$ is mapped bijectively 
to the set of vertex leaves of $X^\dagger$. 
Here, a \emph{vertex leaf} is a connected component of the set of points whose forward orbit under the semiflow meets a vertex.
The map $q$ is a homotopy equivalence and we use it to once and for all identify the fundamental groups and homology groups of the $2$-complexes.

\smallskip
Following \cite[\S5.1]{DKL2}, a \emph{cross section} of $X^\dagger$ is a finite embedded graph $\Theta$ that is transverse to the flow such that every flowline hits $\Theta$ infinitely often in the sense that $\{s\in \R_{\ge 0} \mid \flow^\dagger_s(x)\in \Theta\}$ is unbounded for every $x\in X^\dagger$.  In this case $\Theta$ has continuous \emph{first return map} $f_\Theta\colon \Theta\to \Theta$ that sends $x\in \Theta$ to the next point at which the flowline $\flow^\dagger_s(x)$ intersects $\Theta$.

\begin{remark}
\label{rem:non-injective-cross-sections}
We caution that in general a cross section $\Theta$ need not be $\pi_1$--injective and the first return map $f_\Theta$ need not be a homotopy equivalence.
\end{remark}

For our purposes, we will only consider cross sections that are  \emph{compatible} with $\flow^\dagger$, which means the intersection of $\Theta$ with the $1$--skeleton of $X^\dagger$ is a finite set contained in the vertex leaves. 
In this case, $\Theta$ may be equipped with a finite \emph{standard graph structure} \cite[Definition 7.3]{DKL2} in which every edge lies in a $2$--cell of $X^\dagger$ and has endpoints in vertex leaves. Then, the first return map $f_\Theta$ is a train track map 
with irreducible transition matrix $A_\Theta$ and spectral radius $\lambda(f_\Theta) > 1$ \cite[Proposition 7.7]{DKL2}. According to \cite[Corollary 7.9]{DKL1}, the train track map $f_\Theta$ satisfies the additional property that the Whitehead graphs at each of its vertices are connected (note that the proof of \cite[Corollary 7.9]{DKL1} does not use the standing assumption of \cite[Convention 7.6]{DKL1} that $f$ is atoroidal).

For any cross section $\Theta$ of $X^\dagger$, the semiflow $\flow^\dagger$ can be reparameterized so that the return time to $\Theta$ is constant and equal to $1$ (see \cite[Definition 5.1]{DKL2}).
 Then, just as above, there is a natural flow preserving quotient map $q_\Theta$ from the mapping torus $X_{f_\Theta}$ of the first return map $f_\Theta$ onto $X^\dagger$ determined by
\begin{align} \label{eq:flowhmtpy}
\Theta \times [0,1] &\to X^\dagger \\
(x,t) &\mapsto \flow^\dagger_t(x). \nonumber
\end{align}
Since the induced flow preserving map $q_\Theta \colon X_{f_\Theta} \to X^\dagger$ is a $\pi_1$--isomorphism, it is a homotopy equivalence. 
Compatibility of the cross section $\Theta$ implies that the set of vertex leaves of $X_{f_\Theta}$ is mapped bijectively to the set of vertex leaves of  $X^\dagger$, just as for the original map $q$ defined above. In what follows, all cross sections are assumed to be compatible.

For any cross section $\Theta$, the homotopy equivalence $q_\Theta \colon X_{f_\Theta} \to X^\dagger$ identifies the fundamental groups and homology groups of the $2$-complexes. Hence, any cross section determines a \emph{dual cohomology class} 
\[
[\Theta] \in H^1(X_{f_\Theta};\Z) \cong H^1(X^\dagger;\Z)  
= \mathrm{Hom}(\Gamma,\Z) = \mathrm{Hom}(H, \Z).
\]

In fact, the classes dual to cross sections are precisely the primitive integral points of an open, rational cone $\mc C_{\mf m}$ called the \emph{cone of cross sections} (see \Cref{thm:DKL-cone-theorem}). To explain, we must first recall the McMullen polynomial. 

\subsection{The McMullen polynomial and the cone of cross sections}
\label{sec:Mccone}
We next describe a polynomial invariant of the dynamical system $(X^\dagger, \flow^\dagger)$ that was constructed in \cite[\S4.1]{DKL1}.
Continue to let $H = H_1(X^\dagger)/\mathrm{torsion}$ and let $\wt X^\dagger\to X^\dagger$ be the free abelian cover with deck group $H$ and with lifted semiflow $\wt{\flow}^\dagger_t\colon \wt X^\dagger\to \wt X^\dagger$.

Let us define a \emph{transversal} of $\wt \flow^\dagger$ to be an arc $\tau$ contained in a $2$--cell of $\wt X^\dagger$ that is transverse to the flowlines and has its endpoints in vertex leaves. The action of $H$ on $\wt X^\dagger$ by deck transformations induces an action on transversals and makes the free $\Z$--module $\mathbb{F}(\wt \flow^\dagger)$ on the set of transversals into a $\Z[H]$--module.

The \emph{module of transversals} $T(\wt \flow^\dagger)$ is the quotient of $\mathbb{F}(\flow^\dagger)$ by the submodule generated by subdivision relations $\tau - \tau_1 - \tau_2$ for all transversals where $\tau = \tau_1\cup \tau_2$ and $\tau_1\cap \tau_2$ is a point in a vertex leave, and flow relations $\tau - \tau'$ whenever a transversal $\tau$ flows homeomorphically on to $\tau'$. 
Proposition 4.3 of \cite{DKL2} proves that $T(\wt \flow^\dagger)$ is a finitely presented $\Z[H]$--module.

\begin{definition}\label{def:mcdef}
The \emph{McMullen polynomial} $\mf m\in \Z[H]$ of the system $(X^\dagger,\flow^\dagger)$ is the $0^\mathrm{th}$ Fitting invariant of $T(\wt \flow^\dagger)$; that is, gcd of the $0^\mathrm{th}$ Fitting ideal of the $\Z[H]$--module $T(\wt \flow^\dagger)$.
\end{definition}

\begin{remark}
\label{rem:different_action_from_DKL} 
We use the natural action of $H$ on transversals given by $h\cdot \tau = h(\tau)$ for $h\colon \wt X^\dagger\to \wt X^\dagger$ a deck transformation. This differs from the convention used in \cite{DKL2}, where $H$ was declared to act on transversals by taking preimages.
 As a result, our McMullen polynomial $\mf m$ differs from the polynomial $\mf m_{\mathrm{DKL}}$ introduced in \cite{DKL2} by the ring automorphism $\mathrm{inv}\colon\Z[H]\to \Z[H]$ induced by the automorphism $h\mapsto h^{-1}$ of the abelian group $H$; that is,  $\mf m = \mathrm{inv}(\mf m_{\mathrm{DKL}})$.  
The convention here is chosen to more closely parallel the construction of the Alexander polynomial, whereas the convention in \cite{DKL2} was used to parallel features of the Teichm\"uller polynomial in \cite{McMullen--poly_invariants}.
\end{remark}

While \Cref{def:mcdef} is rather opaque, \cite[Theorem D]{DKL1} proves that $\mf m$ may be explicitly calculated as a determinant with respect to any compatible cross section $\Theta\subset X^\dagger$. To explain this, let $\wt\Theta\subset \wt X^\dagger$ be the full preimage and let $\wt\Theta_0$ be a designated connected component. Choose also a lift $\wt f_\Theta\colon \wt \Theta_0\to \wt \Theta_0$ of $f_\Theta$ to $\wt \Theta_0$.
By using the flow preserving homotopy equivalence $q_\Theta \colon X_{f_\Theta} \to X^\dagger$ from \Cref{eq:flowhmtpy}, we see that
exactly as in \S\ref{sec:abelian_cover}, these choices determine a splitting $H\cong K_\Theta\oplus \langle \dt_\Theta\rangle$ of the deck group $H$ 
as well as an identification $C_1(\wt \Theta_0)\cong \Z[K_\Theta]^{E\Theta}$ with respect to which the transition matrix of $\wt f_\Theta$ is expressed as an $E\Theta\times E\Theta$ matrix $\wt A_\Theta$ with entries in $\Z[K_\Theta]$ that specializes to $A_\Theta$ under the augmentation map $\Z[K]\to \Z$.
With this notation, we have the following calculation of $\mf m$:

\begin{theorem}[Determinant formula {\cite[Theorem D]{DKL2}}]
\label{thm:mcpoly_det_formula}
Let $\Theta$ be any connected, compatible cross section of $(X^\dagger, \flow^\dagger)$ with associated splitting $H = K_\Theta \oplus \langle \dt_\Theta\rangle$ as above. Then the McMullen polynomial of $(X^\dagger, \flow^\dagger)$ is
\[\mf m \doteq \det(\dt_\Theta I - \wt A_\Theta).\]
\end{theorem}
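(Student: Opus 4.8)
The plan is to deduce this from \cite[Theorem D]{DKL1} (equivalently \cite[Theorem D]{DKL2}), whose substance is a concrete finite presentation of the module of transversals $T(\wt\flow^\dagger)$ read off from a compatible cross section. So the steps are: (i) use $\Theta$ to produce $\Z[H]$--module generators of $T(\wt\flow^\dagger)$; (ii) identify the relations, obtaining a \emph{square} presentation matrix of the form $I - \posgen_\Theta\wt A_\Theta$, where $\posgen_\Theta\colonequals\dt_\Theta^{-1}$ is the deck transformation translating \emph{along} the reparameterized flow; (iii) extract $\mf m$ as the gcd of the $0^{\mathrm{th}}$ Fitting ideal, which for a square presentation is generated by the determinant; and (iv) do the unit bookkeeping that rewrites $\det(I-\posgen_\Theta\wt A_\Theta)$ as $\det(\dt_\Theta I - \wt A_\Theta)$ and accounts for the conventions of \Cref{rem:different_action_from_DKL}.

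For step (i): fix the designated component $\wt\Theta_0\subset\wt\Theta$ and, for each edge $e$ of the standard graph structure on $\Theta$, let $\tau_e$ be a standard transversal of $\wt\flow^\dagger$ sitting just above the chosen lift $\wt e\subset\wt\Theta_0$ inside the $2$--cell of $\wt X^\dagger$ containing $\wt e$. Using the subdivision relations to cut an arbitrary transversal into pieces each lying in a single $2$--cell, and then flowing each piece forward until it lands just above $\wt\Theta$, one sees that every element of $T(\wt\flow^\dagger)$ is a $\Z[H]$--combination of the $\tau_e$, giving a surjection $\Z[H]^{E\Theta}\twoheadrightarrow T(\wt\flow^\dagger)$. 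For step (ii): flowing $\tau_e$ forward for one unit of (reparameterized) time carries it into the component $\posgen_\Theta\wt\Theta_0$ sitting one step up the flow, where it becomes the concatenation of standard transversals indexed by the edges crossed by $\wt f_\Theta(\wt e)$; by definition of $\wt A_\Theta\in\mathrm{Mat}_{E\Theta}(\Z[K_\Theta])$ and of the splitting $H = K_\Theta\oplus\langle\dt_\Theta\rangle$ (exactly as in \S\ref{sec:abelian_cover}, now applied to the cross-section mapping torus $X_{f_\Theta}$ via $q_\Theta$), this flow relation reads $\tau_e = \posgen_\Theta\sum_{e'}(\wt A_\Theta)_{e'e}\,\tau_{e'}$. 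The content of \cite[Theorem D]{DKL1} is that the subdivision and flow relations exhaust all relations, i.e.\ that
\[
\Z[H]^{E\Theta}\xrightarrow{\;I-\posgen_\Theta\wt A_\Theta\;}\Z[H]^{E\Theta}\longrightarrow T(\wt\flow^\dagger)\longrightarrow 0
\]
is a presentation; I would simply cite that argument here rather than reproduce it.

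For steps (iii) and (iv): since the presentation matrix is square, $\mathrm{Fit}_0(T(\wt\flow^\dagger))$ is the principal ideal generated by $\det(I-\posgen_\Theta\wt A_\Theta)$, so $\mf m\doteq\det(I-\posgen_\Theta\wt A_\Theta)$ by \Cref{def:mcdef}. As $\posgen_\Theta$ is a unit in $\Z[H]$ and
\[
\det(\dt_\Theta I-\wt A_\Theta)=\det\!\big(\posgen_\Theta^{-1}(I-\posgen_\Theta\wt A_\Theta)\big)=\posgen_\Theta^{-|E\Theta|}\det(I-\posgen_\Theta\wt A_\Theta),
\]
we get $\mf m\doteq\det(\dt_\Theta I-\wt A_\Theta)$. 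The only place the present conventions enter is that \cite{DKL1,DKL2} let $H$ act on transversals by preimages rather than by the natural deck action; by \Cref{rem:different_action_from_DKL} this replaces $\mf m$ by $\mathrm{inv}(\mf m)$, and since $\dt_\Theta$ is chosen here to translate \emph{against} the flow (matching the sign convention for $\dt$ in \S\ref{sec:abelian_cover}), carrying the involution through the computation above yields precisely the displayed formula and not its image under $h\mapsto h^{-1}$. The genuine obstacle is the exactness claim in step (ii) — that the one-step flow relations are all the relations in $T(\wt\flow^\dagger)$ — which is the technical heart of \cite[Theorem D]{DKL1} and which we invoke rather than reprove.
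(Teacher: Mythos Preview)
Your proposal is correct in spirit but more elaborate than what the paper does. The paper does not give a self-contained proof of this theorem: it is stated as a citation of \cite[Theorem~D]{DKL2}, and the only argument supplied is \Cref{rmk:mcMpoly_under_inv}, which converts the DKL formula $\mf m_{\mathrm{DKL}}\doteq\det(xI-\wt A(\mathbf t))$ into the present conventions by applying the involution $\mathrm{inv}$ (noting that $x\mapsto\dt_\Theta$ and $\wt A(\mathbf t)\mapsto\wt A_\Theta$ under $\mathrm{inv}$, since $x=\dt_\Theta^{-1}$ and the module actions differ by taking preimages).

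Your steps (i)--(iii) sketch the actual content of \cite[Theorem~D]{DKL2} (the square presentation of $T(\wt\flow^\dagger)$), which the paper simply invokes rather than reproduces; and your step (iv) is exactly the convention bookkeeping the paper carries out in \Cref{rmk:mcMpoly_under_inv}. So you and the paper agree on the approach---cite the DKL determinant formula and translate conventions---but you have additionally outlined the cited argument, which the paper does not do.
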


\begin{remark}  \label{rmk:mcMpoly_under_inv}
In the coordinates $\Z[H]\cong \Z[{\bf t}^{\pm}, x^\pm]$ used in \cite{DKL2}, the determinant formula is written $\mf m_{\mathrm{DKL}} \doteq \det(xI - \wt A(\mathbf{t}))$, where $x$ is the inverse of our element $\dt_\Theta$ and $\wt A(\bf t)$ describes $\wt f_\Theta$ acting on $C_1(\wt \Theta_0)$ with respect to the module structure in which deck transformations act by taking preimages. Hence $\mathrm{inv}\colon \Z[H]\to \Z[H]$ sends $x$ to $\dt_\Theta$ and $\wt A(\mathbf{t})$ to $\wt A_\Theta$, so that we indeed have $\mf m = \mathrm{inv}(\mf m_{\mathrm{DKL}}) \doteq \mathrm{inv}(\det(xI - A(\mathbf{t}))) = \det(\dt_\Theta I - \wt A_\Theta)$.
\end{remark}
Notice that the dual cohomology class $[\Theta]\colon H\to \Z$ sends $\dt_\Theta$ to $-1$ and $K_\Theta$ to $0$; hence  the induced ring map $\Z[H]\to \Z[\Z]\cong \Z[t^\pm]$ sends $\dt_\Theta$ to $t^{-1}$ and $\wt A_\Theta$ to the $E\Theta\times E\Theta$ integer matrix $A_\Theta$. Applying \Cref{thm:mcpoly_det_formula}, we conclude that the specialization of $\mf m$ at $[\Theta]$ (see \S\ref{sec:invariants}) is simply the reciprocal characteristic polynomial of the transition matrix $A_\Theta$ of $f_\Theta$:
\begin{equation}
\label{eqn:specialized_det_formula} 
\mf m^{[\Theta]}(t) \doteq \det(t^{-1} I - A_\Theta).
\end{equation}

To summarize: the first return map $f_\Theta$ is a train track map with irreducible transition matrix $A_\Theta$ whose characteristic polynomial is $\mf m^{[\Theta]}(t^{-1})$. Hence the geometric stretch factor $\lambda(A_\Theta)>1$ of $f_\Theta$ is the reciprocal of the smallest root of $\mf m^{[\Theta]}(t)$.

The construction picks out a distinguished term of the McMullen polynomial; namely the $z^{\abs{E}}$ term of $\det(zI - \wt A_{G^\dagger})$ in the coordinates $H = K\oplus \langle z \rangle$ adapted to the base cross section $G^\dagger$. Writing 
$\mf m = a_0 h_0 + \dots + a_n h_n\in \Z[H]$ with $a_0 h_0$ this distinguished term, the polynomial determines a corresponding \emph{cone of cross sections}
\[\mathcal{C}_{\mf m} = \{u\in H^1(X^\dagger;\R) \mid u(h_0) < u(h_i)\text{ for each }i=1,\dots, n\}.\]
Notice that this is an open polyhedral cone with finitely many rationally defined sides.
The significance of $\mathcal{C}_{\mf m}$ (and a justification of its name) is captured by the following main result from \cite{DKL2}:

\begin{theorem}[{\cite[Meta-Theorem I]{DKL2}}]
\label{thm:DKL-cone-theorem}
An integral cohomology class $u\in H^1(X^\dagger;\Z)$ is dual to a cross section if and only if $u\in \mathcal{C}_{\mf m}$; this cross section may be chosen to be compatible with $\flow^\dagger$ and is connected if and only if $u$ is primitive.

Further,  $\mathcal{C}_{\mf m} \subset H^1(X^\dagger;\R)$ is equal to the set of cohomology classes  that are positive on every 
closed orbit of the semiflow $\flow^\dagger$, and also to
 the component of the BNS-invariant $\BNS(\Gamma)$ containing the dual class $\pi_1(G)_{f_*}\to \Z$ of the original HNN splitting of $\Gamma$.
\end{theorem}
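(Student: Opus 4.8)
This statement is \cite[Meta-Theorem I]{DKL2}, so the plan is to recall the structure of that proof rather than to reprove it; it is a version, adapted to the singular world of the folded mapping torus, of Fried's theory relating cross sections of a flow to the cohomology classes that are positive on closed orbits. The first step is the easy implication. Any compatible cross section $\Theta$ comes with the flow-preserving homotopy equivalence $q_\Theta\colon X_{f_\Theta}\to X^\dagger$ of \eqref{eq:flowhmtpy}, which identifies $\Gamma$ with $\pi_1$ of the mapping torus of the first return map $f_\Theta\colon\Theta\to\Theta$; this mapping torus structure exhibits $[\Theta]$ as the dual class of an HNN splitting of $\Gamma$ over the finitely generated group $\pi_1(\Theta)$, so $[\Theta]\in\BNS(\Gamma)$. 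Moreover $[\Theta]$ is positive on every closed orbit $\gamma$ of $\flow^\dagger$: since $\Theta$ is transverse to $\flow^\dagger$ all intersection points of $\gamma$ with $\Theta$ carry the same sign, and the resulting (necessarily positive) signed count equals $[\Theta]([\gamma])$.

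The second step repackages the McMullen polynomial so that it detects closed orbits, and then runs Fried's straightening construction for the converse. Unwinding \Cref{def:mcdef} together with the determinant formula \Cref{thm:mcpoly_det_formula} in the coordinates $H=K\oplus\langle\dt\rangle$ adapted to the base cross section $G^\dagger$, one reads off that the distinguished term $h_0$ and the differences $h_ih_0^{-1}$ encode the homology classes of the closed orbits of $\flow^\dagger$; hence a class $u$ is positive on every closed orbit precisely when $u(h_0)<u(h_i)$ for every other term, i.e.\ precisely when $u\in\mc C_{\mf m}$. This identifies $\mc C_{\mf m}$ with the (open, rational, polyhedral) cone of classes positive on all closed orbits. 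For the reverse implication — such a class is dual to a cross section — one follows Fried: realize $u$ by a map $X^\dagger\to S^1$ and homotope it to be flow-monotone, so the preimage of a regular value becomes an embedded graph transverse to $\flow^\dagger$; what makes this effective here is the Markov-type combinatorial structure that $X^\dagger$ carries by virtue of being assembled from a Stallings-fold factorization of $f$, which reduces the dynamical hypothesis to a finite condition on the $1$--skeleton and vertex leaves. A separate bookkeeping argument — tracking how the full preimage $\wt\Theta$ of a cross section splits under the $H$--action and using primitivity of $u$ — gives the ``connected if and only if $u$ is primitive'' clause. Together with the first step, this matches the primitive integral points of $\mc C_{\mf m}$ with the dual classes of compatible cross sections and yields $\mc C_{\mf m}\subseteq\BNS(\Gamma)$.

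The last step promotes ``$\mc C_{\mf m}\subseteq\BNS(\Gamma)$'' to ``$\mc C_{\mf m}$ \emph{is} the component of $\BNS(\Gamma)$ containing $\pi_1(G)_{f_*}\to\Z$''. That original dual class is dual to the cross section $G^\dagger$, so it lies in $\mc C_{\mf m}$, which is therefore contained in a single component $\mc B$; conversely, a class on $\partial\mc C_{\mf m}$ has a closed orbit of $\flow^\dagger$ in its kernel, and passing to the corresponding cover and using the semiflow one argues that the kernel of such a class is not finitely generated, so $\partial\mc C_{\mf m}$ is disjoint from $\BNS(\Gamma)$ and the connected set $\mc B$ cannot escape the open set $\mc C_{\mf m}$. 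I expect the genuine obstacle to be the straightening construction of the second step: Fried's argument is for smooth flows on closed manifolds, whereas $X^\dagger$ is a $2$--complex with branching, so upgrading a purely cohomological hypothesis to an honest embedded cross section compatible with $\flow^\dagger$ must be done by hand using the fold structure — this is the technical heart of \cite{DKL1}, and a paper citing Meta-Theorem I would simply invoke it.
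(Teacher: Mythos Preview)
Your proposal correctly identifies that this theorem is not proved in the paper at all: it is stated with the attribution \cite[Meta-Theorem I]{DKL2} and used as a black box, exactly as you anticipate in your final sentence. Your sketch of the argument from \cite{DKL1,DKL2} is a reasonable summary of how that result is assembled there, but no comparison with the paper's own proof is possible since the paper provides none.
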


\subsection{McMullen polynomials via Perron polynomials}
In this section, we present an alternative characterization of the McMullen polynomial (\Cref{prop:alt_def}) that 
will be essential in understanding the relation between the McMullen polynomial and the Alexander polynomial in the negatively orientable setting. 

Let $D$ be the directed graph with a vertex for each edge of $G$ and a directed edge from $e$ to $e'$ for each time $f(e)$ crosses $e'$ with either orientation. That is, $D$ is the directed graph whose adjacency matrix is $A$, and so in particular $D$ is strongly connected. Recalling the cell structure of $X_f$ discussed in \S\ref{sec:mapping_tori}, there is an embedding $i \colon D \to X_f$ obtained by mapping each vertex of $D$ to the midpoint of the corresponding horizontal edge of $X_f$ and mapping the edges out of the $e$-vertex into the $2$--cell $\sigma_e$ in the obvious way. See \Cref{fig:D}. The map $i \colon D \to X_f$ is an embedding and we often identify $D$ with its image.

\begin{figure}[ht]
\begin{tikzpicture}[scale=.7]
\draw (0,0) -- (6,0) -- (6,3) -- (0,3) -- (0,0);
\path (1.5,0) node[below] {$e$};
\path (-3.1,1.5) node[left] {$e$};
\draw [black] plot [only marks, mark size=3.5, mark=*] 
 coordinates {
(0,0) (6,0) 
(0,3) (2,3) (4,3) (6,3)
};
\begin{scope}[blue!70!white,thick,decoration={
    markings,
    mark=at position 0.6 with {\arrow{stealth}}}
  ] 
\draw[postaction={decorate}] (3,0) -- (1,3); 
\draw[postaction={decorate}] (3,0) -- (3,3);
\draw[postaction={decorate}] (3,0) -- (5,3);
\draw[postaction={decorate}] (2.5,-1) -- (3,0);
\draw[postaction={decorate}] (3.5,-1) -- (3,0);

\draw[postaction={decorate}] (-3,1.5) -- (-3.8, 2.5);
\draw[postaction={decorate}] (-3,1.5) -- (-3, 2.8);
\draw[postaction={decorate}] (-3,1.5) -- (-2.2, 2.5);
\draw[postaction={decorate}] (-2.6 ,.5 ) -- (-3,1.5);
\draw[postaction={decorate}] (-3.4 ,.5 ) -- (-3,1.5);
\draw plot [only marks, mark size=3.5, mark=square*] 
 coordinates {
(-3, 1.5) (3,0)
(1,3) (3,3) (5,3)
};
 \end{scope}
\end{tikzpicture}
    \caption{The image of $D$ inside the $2$-cell $\sigma_e$ whose bottom edge is $e$. The original vertices of $X_f$ are black dots and the midpoint vertices (i.e.~the images of vertices of $D$) are blue squares.}
\label{fig:D}
\end{figure}

\begin{lemma} \label{lem:surject}
The embedding $i \colon D \to X_f$ is $\pi_1$--surjective.
\end{lemma}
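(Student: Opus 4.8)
The plan is to show that every loop in $X_f$ based at a vertex of $D$ can be homotoped into $D$, using the explicit cell structure of $X_f$ described in \S\ref{sec:mapping_tori}. First I would fix a basepoint: choose a vertex $d_0$ of $D$, say the midpoint of the horizontal edge $e_0$ of $X_f$. Since $X_f$ is a $2$--complex, $\pi_1(X_f, d_0)$ is generated by loops that read off edge paths in the $1$--skeleton $X_f^{(1)}$ (together with a connecting path through a maximal tree), so it suffices to show that each such generating loop is homotopic rel basepoint into $i(D)$.

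The key geometric input is that each $2$--cell $\sigma_e$ of $X_f$, whose boundary reads $e \cdot \overline{f(e)} \cdot (\text{vertical edges})$ in the notation of \S\ref{sec:mapping_tori}, contains the star of the corresponding $e$--vertex of $D$: the edges of $D$ leaving that vertex run from the midpoint of $e$ to the midpoints of the horizontal edges crossed by $f(e)$, as in Figure~\ref{fig:D}. I would argue step by step. Step one: each horizontal edge $e$ of $X_f$ is homotopic, rel endpoints-pushed-to-midpoint, to a path running from the midpoint of $e$ back along half of $e$; more usefully, since the midpoint $d_e$ of $e$ lies on $D$ and $e$ is contained in the closure of $2$--cells, traversing $e$ can be replaced up to homotopy by a path in $D$ passing through $d_e$. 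Step two: handle a vertical edge $e_v$ of $X_f$. Its two endpoints are vertices $v$ and $f(v)$ of $G$, each of which is an endpoint of at least one horizontal edge (since $G$ is connected and $f$ is a train track map, hence every vertex has horizontal edges incident — this uses that $G$ has no isolated vertices). Using the $2$--cell $\sigma_e$ for a horizontal edge $e$ incident to $v$, the portion of $\partial\sigma_e$ consisting of the vertical edges can be pushed across $\sigma_e$ to a path lying in $e \cup f(e)$, i.e.\ through midpoint vertices of $D$ joined by edges of $D$. Concretely, the boundary relation $\partial \sigma_e$ shows that the concatenation of vertical edges along $\partial\sigma_e$ is homotopic rel endpoints to $\overline{e} \cdot f(e)$ inside $\sigma_e$, and $\overline{e}\cdot f(e)$ is (up to the midpoint homotopies of step one) a path in $D$. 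Step three: assemble. Any edge-loop $\gamma$ in $X_f^{(1)}$ based at $d_0$ decomposes into horizontal and vertical edges; replace each horizontal edge by a $D$--path through its midpoint (step one) and each maximal vertical segment by the corresponding $D$--path supplied by step two, checking that consecutive replacements match up at the shared midpoint vertices of $D$. This exhibits $[\gamma]$ in the image of $\pi_1(D) \to \pi_1(X_f)$, so $i$ is $\pi_1$--surjective.

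Alternatively — and this may be cleaner to write — I would use a maximal-tree / Euler characteristic style count. Collapse a maximal tree of $D$; then $\pi_1(D)$ is free of rank $|E_D| - |V_D| + 1$ where $V_D = E(G)$ and $E_D = \sum_{e}|\{\text{letters of } f(e)\}|$, which is exactly the number of $(\text{edge},\text{crossing})$ incidences, i.e.\ the sum of the entries of the transition matrix $A$. One then observes that $X_f$ deformation retracts onto a subcomplex built from $i(D)$ by attaching, for each $2$--cell $\sigma_e$, a relation that is already a consequence of loops in $D$ together with the vertical structure; since every $2$--cell meets $i(D)$ in the star of a vertex and $i(D)$ already connects across all cells, no new $\pi_1$ is created outside $i(D)$. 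I would lean on the explicit description of $\partial_2$ from \S\ref{sec:mapping_tori}, namely $\partial_2(\sigma_e) = e - f(e) + \nu\circ\partial_1(e)$, to see that within each $2$--cell the relevant loop lies in $i(D)$ after the midpoint homotopies. The main obstacle I anticipate is the bookkeeping in step two: making precise, with the correct orientations and basepoint-connecting paths, that the vertical-edge portions of $2$--cell boundaries can be simultaneously and compatibly pushed into $D$, especially handling vertices of $G$ that lie on long vertical trees (the ``hanging trees'' of the vertex cycles from \S\ref{sec:rel_hom}) rather than directly on a horizontal edge of a single convenient $2$--cell. I would address this by working one vertical edge at a time, always using a $2$--cell incident to the \emph{horizontal} edge adjacent to that vertex, and inducting along the vertical trees toward the vertex cycles $\mathcal V_k'$.
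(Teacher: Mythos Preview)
There is a genuine gap. Your argument never invokes the hypothesis that $f$ represents a fully irreducible automorphism, and that hypothesis is where the work is. The crux is your ``step one'': you assert that a horizontal edge, or more generally a horizontal path, can be replaced by a path in $D$ through midpoints, but you do not explain how. The difficulty is this: a horizontal path $m_a\, v\, m_b$ consisting of two half-edges meeting at a vertex $v$ of $G$ can only be pushed into $D$ through a $2$--cell if the turn at $v$ between those half-edges actually occurs in some $f$--image $f(e)$; otherwise there is no $2$--cell whose top passes through $v$ via that turn, and the star of $D$ gives you nothing to homotope across. Your step two inherits the same problem, since pushing a vertical edge across a $2$--cell leaves you with a horizontal path $\bar e\cdot f(e)$ that still must be converted into a $D$--path at each vertex of $G$ it crosses.

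The paper closes this gap precisely by using that $f$ is an irreducible train track representative of a fully irreducible automorphism, which forces the Whitehead graph $\mathrm{Wh}_G(v)$ at every vertex to be connected. Connectedness lets one interpolate between any two half-edges at $v$ by a chain $m_a, m_1,\dots,m_k, m_b$ in which every consecutive turn $m_i\,v\,m_{i+1}$ \emph{is} taken by some $f^n(e)$; each such piece is then homotoped into $D$ by induction on $n$. Without this, the lemma can fail: for instance the map $a\mapsto aba$, $b\mapsto bab$ on the rose has primitive transition matrix but disconnected Whitehead graph (only the turns $\{\bar a,b\}$ and $\{\bar b,a\}$ occur), and indeed the underlying automorphism is not fully irreducible. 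Your alternative Euler-characteristic sketch has the same defect: the assertion that ``no new $\pi_1$ is created outside $i(D)$'' is exactly the statement to be proved, and rank counts alone cannot detect surjectivity.
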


\begin{proof}
Identify $D$ with its image under $i$.
Begin by subdividing the horizontal edges of $X_f$ at the vertices of $D$ and call these \emph{midpoint vertices}. It suffices to show that if $p$ is an edge path in $X_f^{(1)}$ that meets the midpoint vertices exactly at its endpoints then it is homotopic rel vertices into $D$. 

First assume that $p$ is horizontal, i.e.~contained in $G \subset X_f$. Then $p$ contains exactly one vertex $v$ of $G$ and two half edges with midpoints $m_a$ and $m_b$; we denote these (directed) half edges by $m_av$ and $vm_b$, respectively, so that $p = m_av *vm_b$. Note that if $m$ is the midpoint vertex of an edge $e$ with both its endpoints at $v$, then the notation $mv$ is ambiguous, but in what follows the proper meaning will be clear from context.

 If the path $p$ lies in the top of a $2$-cell, then the claim is clear since $p$ is homotopic rel endpoints to a (nondirected) path in $D$ of length $2$. See \Cref{fig:D}. Otherwise, we consider the vertices $m_a$ and $m_b$ in the Whitehead graph $\mathrm{Wh}_G(v)$. Since $f$ represents a fully irreducible automorphism, $\mathrm{Wh}_G(v)$ is connected and so there is a path $m_a, m_1,\ldots, m_k ,m_b$ in $\mathrm{Wh}_G(v)$. By definition of the Whitehead graph, for each $i$, the corresponding path $m_ivm_{i+1}$ in $G$ is contained in the image of an edge under $f^n$ for some $n\ge 1$. We claim that these paths are each homotopic into $D$. If $n=1$, then $m_ivm_{i+1}$
lies at the top of a $2$-cell, so by the previous case, it is homotopic into $D$. Otherwise, the turn of $G$ at $v$ associated to $m_ivm_{i+1}$ is the image of the turn at a vertex $w$ associated to a path $m_i'wm_{i+1}'$ in $G$, and $m_i'wm_{i+1}'$ is
traversed by the image of an edge under $f^{n-1}$. By induction, $m_i'wm_{i+1}'$ is homotopic into $D$ and if we denote by $c_i$ and $c_{i+1}$ the $2$--cells whose bottom edges contain the midpoint vertices $m_i'$ and $m_{i+1}'$, respectively, we see that $c_i$ and $c_{i+1}$ share the vertical edge from $w$ to $v$. Moreover, $c_j$ contains a directed edge $d_j$ of $D$ from $m_j'$ to $m_j$ for each $j \in \{i, i+1\}$. Hence, $m_ivm_{i+1}$ is homotopic in $c_i \bigcup c_{i+1}$ to the path $\overline{d}_{i}*m_i'wm_{i+1}'*d_{i+1}$ and hence homotopic into $D$.
 As $p = m_avm_b$ is homotopic to the concatenation $m_a v m_1*m_1vm_2* \ldots * m_k v m_b$, this shows that every horizontal $p$ is homotopic into $D$.

Next, suppose that $p$ contains a single vertical edge $e$. If this path lies in the boundary of a single $2$-cell, then it is again clear from \Cref{fig:D} that $p$ is homotopic into $D$. Otherwise, $p$ has the form $m_a v *e*w m_b$, where $e$ is a vertical edge from $v$ to $w$. Let $\sigma$ be the $2$-cell containing the vertical edge $e$ and $m_a$ along its bottom edge. Let $m_c$ be the midpoint at the top of $\sigma$ nearest $e$. Then $p$ is homotopic to $m_a v *e*wm_c*m_cw m_b$, which we can handle by the previous cases. 
Finally, if $p$ contains multiple vertical edges, then it homotopic to a concatenation of paths of this form $m_a v \ast e \ast w m_b$ and we finish as above. 
\end{proof}

For the embedding $i \colon D \to X_f$, we denote by $i_*$ both the induced group homomorphism
$H_1(D) \to H$ and ring homomorphism $ \mathbb{Z}[H_1(D)] \to \mathbb{Z}[H]$.

\medskip
Now let $P_D$ be the Perron polynomial of $D$, defined as $P_D := \det(I -A)$ where $A$ is the symbolic transition matrix of $D$ given by
\[
A_{xy} = \sum_{\partial e = y-x} e,
\]
with coefficients in $\mathbb{Z}[C_1(D)]$. We refer the reader to \cite{mcmullen2015entropy} for additional details and recall that according to \cite[Section 3]{mcmullen2015entropy} or \cite[Theorem 2.14]{algom2015digraphs},
\begin{align} \label{eq:clique}
P_D = 1 + \sum_{\sigma} (-1)^{\vert \sigma \vert}\sigma \in \mathbb{Z}[H_1(D)], 
\end{align}
where the sum varies over nonempty oriented multi-cycles $\sigma$ in $D$ and $\vert \sigma |$ is the number of components of the multi-cycle.

The next proposition provides an explicit connection between polynomial invariants defined in \cite{DKL2} and \cite{algom2015digraphs}.
The first claim is essentially proven by \cite[Theorem 12.10]{DKL2}, but we give a direct proof using a result from \cite{landry2020polynomial}.

\begin{proposition} \label{prop:alt_def}
With $i \colon D \to X_f$ as above, 
 $\mf m \doteq i_*(P_D)$.
Moreover, $\mf m' = i_*(P_D)$ is the unique normalization of $\mf m$ such that $\supp(\mf m')$ contains $1$ and for some (any) $u \in \mc C_{\mf m}$, $u$ is positive on $\supp(\mf m') \setminus\{1\}$.
\end{proposition}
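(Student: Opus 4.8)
\emph{Strategy for $\mf m\doteq i_*(P_D)$.} The plan is to reduce both $\mf m$ and $i_*(P_D)$ to explicit determinants and match their digraph expansions term by term. First I would apply the determinant formula \Cref{thm:mcpoly_det_formula} to the base cross section $G^\dagger$, whose first return map is $f$; by construction the directed graph $D$ is then exactly the transition digraph of $f_{G^\dagger}=f$. This gives $\mf m\doteq\det(\dt I-\wt A)$, where $\wt A$ is the associated $\Z[K]$--valued lift of the transition matrix of $f$, taken with respect to the base lifts and splitting $H=K\oplus\langle\dt\rangle$ set up in \S\ref{sec:abelian_cover}. Factoring $\dt$ out of each of the $\abs E$ columns gives $\mf m\doteq\dt^{\abs E}\det(I-\dt^{-1}\wt A)$. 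Now $\dt^{-1}\wt A$ is itself a symbolic transition matrix for $D$, in which each directed edge (each crossing counted by $\wt A$) carries the weight $\dt^{-1}k\in H$, where $k\in K$ records how $\wt f$ carries the corresponding subarc onto the relevant base lift. Hence the coefficient theorem for digraphs — the expansion behind \eqref{eq:clique} — yields $\det(I-\dt^{-1}\wt A)=1+\sum_{\sigma}(-1)^{\abs\sigma}\,\mathrm{wt}_H(\sigma)$, a sum over nonempty oriented multi-cycles $\sigma$ in $D$, with $\mathrm{wt}_H(\sigma)\in H$ the product of the edge weights. Since \eqref{eq:clique} also gives $i_*(P_D)=1+\sum_\sigma(-1)^{\abs\sigma}i_*(\sigma)$, everything reduces to the identity $\mathrm{wt}_H(\sigma)=i_*(\sigma)$, and by multiplicativity over components it suffices to treat one directed cycle $\gamma=(a_0\to a_1\to\cdots\to a_{\ell-1}\to a_0)$ in $D$.

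\emph{The crux.} This identity is where I expect the real work to lie: the obstacle is reconciling the algebraic bookkeeping of \S\ref{sec:abelian_cover} with the geometry of the embedding $i\colon D\to X_f$, so that the weights agree exactly and not merely up to a unit. The argument I have in mind is that the $j$--th edge of $\gamma$ lies in the $2$--cell $\sigma_{a_j}$, whose base lift $\wt\sigma_{a_j}$ has bottom $\wt a_j\subset\wt G_0$ and top $\posgen\wt f(\wt a_j)\subset\wt G_1$, the relevant subarc of which is $\wt a_{j+1}$ translated by $\posgen k_j$, with $k_j\in K$ the matrix entry of $\wt A$ recording this crossing. Lifting the loop $i(\gamma)$ to $\wt X$ starting at the base lift of the midpoint of $a_0$ and following $\sigma_{a_0},\sigma_{a_1},\dots$ in turn, equivariance shows the lift terminates at $\bigl(\prod_j\posgen k_j\bigr)$ applied to that midpoint; since $i(\gamma)$ is a loop this deck element \emph{is} $i_*(\gamma)$, and it equals $\prod_j\dt^{-1}k_j=\mathrm{wt}_H(\gamma)$. (This is an equivariant form of the coefficient theorem; alternatively one could invoke the cited result of Landry \cite{landry2020polynomial}, which provides exactly such an identification of $i_*(P_D)$ with the equivariant characteristic polynomial, in place of this hands-on computation.) Tracking the leading $\dt^{\abs E}$ term through the factorization, the computation in fact gives $i_*(P_D)=\dt^{-\abs E}\mf m$ for the normalization of $\mf m$ fixed by the construction; in particular $\mf m':=i_*(P_D)$ has coefficient $+1$ at $1\in H$.

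\emph{The normalization statement.} For the ``moreover,'' write $\mf m=a_0h_0+\cdots+a_nh_n$ with distinguished term $a_0h_0=z^{\abs E}$, so $h_0=\dt^{\abs E}$ and $\mc C_{\mf m}=\{u:u(h_0)<u(h_i)\text{ for }i\ge1\}$. From $\mf m'=h_0^{-1}\mf m$ one reads off $1\in\supp(\mf m')$ and $\supp(\mf m')\setminus\{1\}=\{h_0^{-1}h_i:i\ge1\}$, and hence $u\in\mc C_{\mf m}$ if and only if $u$ is positive on $\supp(\mf m')\setminus\{1\}$; this gives both asserted properties at once. For uniqueness, any normalization of $\mf m$ is $\pm g\,\mf m'$ with $g\in H$; if it also contains $1$ in its support then $g^{-1}\in\supp(\mf m')$, and if moreover some $u\in\mc C_{\mf m}$ is positive on its support away from $1$ while $g\ne1$, then $g\in g\,\supp(\mf m')\setminus\{1\}$ forces $u(g)>0$ whereas $g^{-1}\in\supp(\mf m')\setminus\{1\}$ forces $u(g^{-1})>0$, a contradiction. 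So $g=1$, and the sign is pinned down by the $+1$ coefficient at $1$, giving $\mf m'=i_*(P_D)$ as the unique such normalization.
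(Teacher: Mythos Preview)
Your argument is correct and follows the same route as the paper: both identify $i_*(P_D)$ with $\det(I-\posgen\wt A)$ and then invoke \Cref{thm:mcpoly_det_formula}. The only difference is packaging---where the paper outsources the identification $i_*(P_D)=\det(I-\posgen\wt A)$ to \cite[Proposition~4.2]{landry2020polynomial} and the positivity of $u$ on $\supp(\mf m')\setminus\{1\}$ to \cite[Proposition~6.3]{DKL2}, you carry both out directly (the former via your lift-and-track computation of $\mathrm{wt}_H(\gamma)=i_*(\gamma)$, the latter via $\mf m'=h_0^{-1}\mf m$ and the very definition of $\mc C_{\mf m}$), so your version is more self-contained while the paper's is shorter.
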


\begin{proof}
First, \Cref{lem:surject} implies that the homomorphism $i_* \colon H_1(D) \to H$ is surjective. By \cite[Proposition 4.2]{landry2020polynomial}, $ i_*(P_D)$ is equal to the polynomial $P_{D,i_*}$ defined as follows: consider the preimage $\wt D$ of $D$ in the universal free abelian cover $\wt X_f$ with deck group $H$. 
The polynomial $P_{D,i_*} \in \mathbb{Z}[H]$ is defined as $\det L$, where $L$ is the endomorphism of the free $\mathbb{Z}[H]$-module generated by $H$--orbits of vertices of $\wt D$ given by $L(v) = v - (v_1 + \ldots + v_l)$. Here, the vertices $v_1, \ldots, v_l$ are exactly the endpoints (with multiplicity) of the directed edges of $\wt D$ out of $v$. Since $D$ is the graph associated to the adjacency matrix $A$ of $f$, we observe that $L = I-\ol \dt \wt A$.
From \Cref{thm:mcpoly_det_formula}, we conclude the required equality:
\[
i_*(P_D) = P_{D,i_*} = \det(I-\ol \dt \wt A) = {\ol \dt}^k \mf m.
\]

For the moreover statement, we first claim that $\mf m' = i_*(P_D)$ has the required properties. This follows from the fact that $P_D$ has the properties that $1 \in \supp(P_D)$ by \cref{eq:clique} and each $a \in \supp(P_D) \setminus \{1\}$ has $u(i(a))>0$ for any $u \in \mc C_{\mf m}$ by \cite[Proposition 6.3]{DKL2}. Therefore, $1 \in \supp(i_*(P_D))$ and the claim is established.

Next suppose that $\mf m''$ is another such normalization. Since $\mf m'$ and $\mf m''$ both contain $1$ in their support and differ by a unit, $\mf m' = h \; \mf m''$ implies that $h \in \supp( \mf m')$ and $h^{-1} \in \supp{(\mf m'')}$.
This must mean that $h=1$ since any $u \in \mc C_{\mf m}$ has to be positive on 
$h$ and $h^{-1}$.
\end{proof}

\begin{proposition} \label{prop:mcgenerate}
Let $\mf m'$ be the normalization of $\mf m$ from \Cref{prop:alt_def}. Then support of $\mf m'$ generates $H$ as an abelian group.
\end{proposition}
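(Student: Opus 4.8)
The plan is to exploit the factorization $\mf m' = i_*(P_D)$ from \Cref{prop:alt_def} together with the surjectivity of $i_*\colon H_1(D)\to H$ from \Cref{lem:surject}. One is tempted to argue that $\supp(\mf m') = i_*(\supp(P_D))$ and observe that the latter generates $H$: by the clique expansion \eqref{eq:clique} the set $\supp(P_D)$ contains the class $[\gamma]\in H_1(D;\Z)$ of every directed cycle $\gamma$ of $D$, and these generate $H_1(D;\Z)$. The trouble is that pushing $P_D$ forward along $i_*$ can create cancellation among the $\pm1$ coefficients in \eqref{eq:clique}, so $\supp(\mf m')$ may be strictly smaller than $i_*(\supp(P_D))$. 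The key observation is that no such cancellation can occur for the \emph{reciprocal} $P_D^{-1}$, whose coefficients are all nonnegative, and that replacing $\mf m'$ by $\mf m'^{-1}$ does not change the subgroup generated by the support.

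First I would record two structural facts about $D$, which is strongly connected because the transition matrix $A$ of the train track map $f$ is irreducible (\Cref{lem:get_prim}). (a) The classes $[\gamma]\in H_1(D;\Z)$ of the simple directed cycles of $D$ generate $H_1(D;\Z)$; this is standard for strongly connected digraphs and can be proved by contracting a simple directed cycle $\gamma_0$ and inducting on the number of edges (a simple directed cycle of $D/\gamma_0$ through the contracted vertex lifts, after closing it up along an arc of $\gamma_0$, to a simple directed cycle of $D$ representing the same class modulo $[\gamma_0]$). (b) In the relevant completed group ring, $P_D^{-1} = \det(I-A)^{-1}$ has all nonnegative coefficients and its support contains $[\gamma]$ for every simple directed cycle $\gamma$; indeed, by the standard zeta-function identity (see \cite{mcmullen2015entropy,algom2015digraphs}) one has the Euler product $P_D^{-1} = \prod_{\mf p}(1-[\mf p])^{-1}$ over the primitive periodic orbits $\mf p$ of $D$, in which each simple directed cycle appears as a factor and every coefficient is nonnegative. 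Combining (a) and (b), $\supp(P_D^{-1})$ generates $H_1(D;\Z)$.

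Next I would choose $u\in\mc C_{\mf m}$ (such classes exist, e.g.\ the dual class of the original splitting by \Cref{thm:DKL-cone-theorem}) and work in the associated cone completion of $\Z[H]$, in which $\mf m' = 1 - g$ is a unit with $g$ supported on classes that are strictly positive under $u$ by \Cref{prop:alt_def}. Since $i_*$ is a ring homomorphism it commutes with inversion, so $\mf m'^{-1} = i_*(P_D^{-1})$; and since $P_D^{-1}$ has nonnegative coefficients, so does $i_*(P_D^{-1})$, so no cancellation occurs and $\supp(\mf m'^{-1}) = i_*(\supp(P_D^{-1}))$. Together with fact (b) and surjectivity of $i_*$, this yields $\langle\supp(\mf m'^{-1})\rangle = i_*\bigl(\langle\supp(P_D^{-1})\rangle\bigr) = i_*(H_1(D;\Z)) = H$.

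Finally, expanding $\mf m'^{-1} = \sum_{k\ge 0} g^k$ shows $\supp(\mf m'^{-1})\subseteq\langle\supp(g)\rangle$; since $1\in\supp(\mf m')$ and appears there with coefficient $1$ (no nontrivial class of $\supp(P_D)$ maps to the identity of $H$, as is clear from \eqref{eq:clique}), we have $\supp(g) = \supp(\mf m')\setminus\{1\}$, hence $\langle\supp(g)\rangle = \langle\supp(\mf m')\rangle$. Therefore $H = \langle\supp(\mf m'^{-1})\rangle\subseteq\langle\supp(\mf m')\rangle\subseteq H$, completing the proof. The main obstacle is precisely recognizing the cancellation problem for $i_*(P_D)$ and finding the way around it — passing to the reciprocal, where positivity of the digraph zeta function forbids cancellation, and using monotonicity of $\langle\supp(\cdot)\rangle$ under inversion; the rest is routine bookkeeping with the cone completion needed to make $P_D^{-1}$ and its Euler product legitimate.
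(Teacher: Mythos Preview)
Your proof is correct and takes a genuinely different route from the paper's. Both proofs begin the same way---the images under $i_*$ of simple directed cycle classes generate $H$ by \Cref{lem:surject}, and the obstacle is that cancellation in $i_*(P_D)$ may prevent these classes from lying in $\supp(\mf m')$. The paper resolves this by an inductive descent on the $u$--value: if a simple cycle class $i_*(x)$ fails to survive in $\supp(\mf m')$, then by the clique expansion it coincides with the image of a multi-cycle $d_1\cup\cdots\cup d_k$ with $k\ge 2$; each $i_*(d_j)$ has strictly smaller $u$--value, so by minimality each lies in the span of surviving classes, and hence so does $i_*(x)$. Your argument instead sidesteps cancellation entirely by passing to the reciprocal $P_D^{-1}$, invoking the Euler product for the digraph zeta function to get nonnegativity of its coefficients, and then observing that inversion of a formal series with constant term $1$ cannot enlarge the subgroup generated by the support. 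The paper's argument is more self-contained and avoids completions and the zeta function identity; yours is more structural and makes transparent why no information is lost---positivity of the symbolic zeta function is doing the work. One small remark: to make the Euler product converge in the $u$--completion for an arbitrary $u\in\mc C_{\mf m}$ one should note that $u(i_*[\mf p])$ is bounded below by a positive constant times the length of $\mf p$ (decompose $\mf p$ into simple cycles), so that only finitely many primitive orbits contribute below any fixed degree; alternatively, simply take $u$ to be the original dual class, for which $u(i_*[\mf p])=\ell(\mf p)$.
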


\begin{proof}
From \Cref{eq:clique}, we observe that the support of $P_D$  contains (the homology classes of) all of the simple directed cycles of $D$ and these generate $H_1(D)$ as an abelian group (see e.g. \cite[Lemma 5.8]{landry2020polynomial}) . It follows from \Cref{lem:surject} that the $i$--images of these directed cycles generate $H$ and so 
$i_*(\supp(P_D))$ generates $H$. 

By \Cref{prop:alt_def}, $\supp(\mf m')$ is equal to
$\supp(i_*(P_D))$, which may be properly contained in 
$i_*(\supp(P_D))$. Nevertheless, we now show that the span of $\supp(i_*(P_D))$ is equal to the the span of $i_*(\supp(P_D))$. We will consider the class $u \in H^1(X_f)$ dual to the projection $X_f \to S^1$ as a `norm'; it has the property that for each directed cycle $y$ of $D$, $u(i(y)) >0$.

Say $x \in \supp(P_D)$ \emph{survives} if $i_*(x) \in \supp(\mf m')$. Then we are left to show that the image of every simple cycle is a combination of the images of surviving simple cycles. 

Suppose to the contrary that there is a simple directed cycle $x \in \supp(P_D)$ such that $i(x)$ is not in the span of the images of surviving simple directed cycles. Among all such cycles, let $x$ be one such that $u(x)$ is minimal. Evidently $x$ does not survive.   Since $x$ is simple and not surviving, there is a $y \in \supp(P_D)$ represented by a multi-cycle $\sigma$ with an even number of components $\abs{\sigma}$ such that $ i_*(x) = i_*(y)$. Hence, $y$
can be written as a disjoint union $d_1 \cup \ldots \cup d_k$ of simple directed cycles in $D$ with $k\ge 2$. Then $i_*(x) = \prod i_*(d_j)$ and so $u(i_*(d_j)) < u(i_*(x))$ for each $j$ (recalling  that $u(i_*(d_j))>0$). But since $x$ is $u$--minimal, we see that each $i_*(d_j)$ must be the span of surviving simple cycles. As $i_*(x)$ is a combination of the $i_*(d_j)$, we have a contradiction and the proof is complete. 
\end{proof}

\subsection{The vertex polynomial}
\label{sec:vertex_poly}
For the folded mapping torus $X^\dagger$, we define the \emph{vertex cycles} of $X^\dagger$ to be 
 the closed orbits of $\flow^\dagger$ that pass through vertices of $X^\dagger$. The vertex cycles form a finite collection of disjoint, embedded closed orbits.

Let $c_1, \ldots, c_m$ be the vertex cycles of $X$. The \emph{vertex polynomial} $\mathfrak{p}$ of $X^\dagger$ is defined to be the element
\[
\mathfrak{p} = \prod_{i=1}^m \left(1-c_i \right) \in \mathbb{Z}[H],
\]
where we have identified $c_i$ with its class in $H$. 

The vertex polynomial can also be explicitly computed as a determinant similar to the McMullen polynomial. For this, we recall the setup before \Cref{thm:mcpoly_det_formula}: 
Fix a (compatible) cross section $\Theta$ of the semiflow $\flow^\dagger$ on $X^\dagger$. 
Let $\wt\Theta\subset \wt X^\dagger$ be the full preimage and let $\wt\Theta_0$ be a designated connected component. Choose also a lift $\wt f_\Theta\colon \wt \Theta_0\to \wt \Theta_0$ of $f_\Theta$ to $\wt \Theta_0$. As before this lift determines a splitting $H\cong K_\Theta\oplus \langle \dt_\Theta\rangle$ of the deck group $H$.
Letting $V\Theta$ denote the set of vertices of $\Theta$, choosing base lifts of these to $\wt \Theta_0$ again gives an identification $C_0(\wt \Theta_0)\cong \Z[K_\Theta]^{V\Theta}$ with respect to which $\wt f_\Theta$'s action on vertices is given as an $V\Theta\times V\Theta$ matrix $\wt P_\Theta$ with entries in $\Z[K_\Theta]$.

\begin{lemma} \label{lem:vertex_poly_formula}
With notation as above, $\mathfrak{p} \doteq \det(\dt_\Theta I-\wt P_\Theta)$.
\end{lemma}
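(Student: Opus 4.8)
The plan is to reduce this to the already-established formula $\det(I-\posgen\wt P)\doteq\prod_{i=1}^m(1-c_i)$ from \Cref{lem:detP}, applied not to the original mapping torus $X$ but to the mapping torus $X_{f_\Theta}$ of the first return map $f_\Theta\colon\Theta\to\Theta$. The point is that \Cref{lem:detP} was proved for an arbitrary graph map, and $f_\Theta$ is a graph map (a train track map, by \cite[Proposition 7.7]{DKL2}) whose mapping torus $X_{f_\Theta}$ is identified with $X^\dagger$ via the flow-preserving homotopy equivalence $q_\Theta$ of \Cref{eq:flowhmtpy}. So the equality $\det(\dt_\Theta I-\wt P_\Theta)\doteq\prod_{i=1}^{m'}(1-c_i')$ holds, where the $c_i'$ are the homology classes of the vertical vertex cycles of $X_{f_\Theta}$ — i.e.\ the closed orbits of the reparametrized semiflow on $X_{f_\Theta}$ through vertices of $\Theta$. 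It then remains to match these classes with the vertex cycles $c_1,\dots,c_m$ of $X^\dagger$.

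First I would recall precisely how \Cref{lem:detP} produces the matrix in question: with base lifts of $V\Theta$ chosen in $\wt\Theta_0$ and the splitting $H=K_\Theta\oplus\langle\dt_\Theta\rangle$ determined by $\wt f_\Theta$, the action of $\wt f_\Theta$ on $C_0(\wt\Theta_0)\cong\Z[K_\Theta]^{V\Theta}$ is the matrix $\wt P_\Theta$, and $\posgen_\Theta\wt P_\Theta$ is (up to reindexing $\Z[H]^{V\Theta}$ via base lifts in $\wt X_{f_\Theta}$) exactly the matrix $\posgen\wt P$ of \S\ref{sec:abelian_cover} for the graph map $f_\Theta$. \Cref{lem:detP} then gives $\det(I-\posgen_\Theta\wt P_\Theta)\doteq\prod(1-c_i')$ where the product runs over connected components of the union of vertical edges of $X_{f_\Theta}$, and $c_i'$ is the class of the embedded vertical loop in each such component — which is precisely a periodic orbit of $f_\Theta$ on $V\Theta$, hence a closed orbit of the (reparametrized) semiflow $\flow^\dagger$ on $X^\dagger$ through a vertex of $\Theta$.

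Next I would argue that the collection $\{c_i'\}$ of these classes agrees, as a multiset in $H$, with the collection $\{c_i\}$ of vertex cycles of $X^\dagger$ used to define $\mf p$. The homotopy equivalence $q_\Theta$ carries vertex leaves of $X_{f_\Theta}$ bijectively to vertex leaves of $X^\dagger$ (this is the compatibility property recorded just after \Cref{eq:flowhmtpy}), and it is flow-preserving, so it carries closed orbits of $\flow^\dagger$ to closed orbits of $\flow^\dagger$. Each vertex cycle of $X^\dagger$ passes through some vertex of $X^\dagger$, which lies in a vertex leaf, which corresponds under $q_\Theta$ to a vertex leaf of $X_{f_\Theta}$ meeting a vertex of $\Theta$ — so the vertex cycle pulls back to a closed orbit of $\flow^\dagger$ on $X_{f_\Theta}$ through a vertex of $\Theta$, i.e.\ to one of the $c_i'$; and conversely. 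Hence $\{c_i\}=\{c_i'\}$ in $H$ and $\mf p=\prod_i(1-c_i)=\prod_i(1-c_i')\doteq\det(\dt_\Theta I-\wt P_\Theta)$, where I have also used that $\det(\dt_\Theta I-\wt P_\Theta)\doteq\det(I-\posgen_\Theta\wt P_\Theta)$ up to the unit $\dt_\Theta^{|V\Theta|}$.

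The main obstacle is the bookkeeping in the previous paragraph: one must be careful that "closed orbit through a vertex of $\Theta$" is the right notion, namely that distinct periodic orbits of $f_\Theta$ on $V\Theta$ yield distinct embedded closed orbits of $\flow^\dagger$ and that no closed orbit of $\flow^\dagger$ through a vertex of $X^\dagger$ is missed — this uses that $\Theta$ is a genuine cross section (every flowline meets $\Theta$ infinitely often) together with the compatibility condition $\Theta\cap(X^\dagger)^{(1)}\subset$ vertex leaves, which guarantees that the vertices of $\Theta$ along a closed vertex orbit are exactly the intersections of that orbit with $\Theta$. Everything else is a direct citation of \Cref{lem:detP} and the structural properties of $q_\Theta$ recalled in \S\ref{sec:folded}.
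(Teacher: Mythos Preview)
Your proposal is correct and follows essentially the same approach as the paper: apply \Cref{lem:detP} to the graph map $f_\Theta\colon\Theta\to\Theta$, then use the flow-preserving homotopy equivalence $q_\Theta\colon X_{f_\Theta}\to X^\dagger$ and its bijection on vertex leaves to identify the vertex cycles of $X_{f_\Theta}$ with those of $X^\dagger$. The paper's proof is terser---it simply asserts that compatibility of $\Theta$ makes $q_\Theta$ a bijection on vertex cycles and that $q_\Theta$ identifies the two splittings of $H$---whereas you spell out the bookkeeping more carefully, but the argument is the same.
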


\begin{proof}
Let $X_{f_\Theta}$ be the folded mapping torus of the return map $f_\Theta$ and let  $q_\Theta \colon X_{f_\Theta} \to X^\dagger$  be flow preserving homotopy equivalence from \eqref{eq:flowhmtpy}. Since $\Theta$ is compatible, $q_\Theta$ induces a bijection between the vertex cycles of the flows. Hence $\mf p = \prod_{i=1}^m(1- [\mc V_i'])$, where $\{\mc V_1',\ldots, \mc V_m'\}$ is the (disjoint) collection of closed orbits of $X_{f_\Theta}$ through vertices. From \Cref{lem:detP}, we then see that $\mf p \doteq \det(\dt I - \wt P)$. Since $q_\Theta$ identifies the splitting $H = K \oplus \langle \dt \rangle$ (from \S\ref{sec:abelian_cover}) with the splitting $H =  K_\Theta\oplus \langle \dt_\Theta\rangle$, this completes the proof.
\end{proof}

\section{Relating the polynomials and applications}
\label{sec:relating}
In this section we prove our main theorems relating the McMullen and Alexander polynomials.
This is done first in the positively orientable case in \S\ref{sec:orient_polys}, then in the negatively orientable case in \S\ref{sec:anti-orientable}, and finally in the general case in \S\ref{sec:gen_orient_polys}.

For the entirety of this section we fix a fully irreducible automorphism $\varphi\colon \free\to \free$ represented by an irreducible train track $f \colon G \to G$. Let $X^\dagger$ be an associated folding mapping torus with fundamental group $\Gamma = \pi_1(X^\dagger)$, let $\mf m, \Delta_\Gamma\in \Z[H]$ be its McMullen and Alexander polynomials, and $\mc C_{\mf m} \subset H^1(X^\dagger;\R)$ its cone of sections.

Since $\mc C_{\mf m}$ is a component of $\BNS(\Gamma)$ (\Cref{thm:DKL-cone-theorem}), we know that every integral class $u\in  \mc C_{\mf m}$ is dual to a splitting of $\Gamma$ as a generalized HNN extension $B*_\phi$ over a finitely generated free group $B$ (see \S\ref{sec:intro_BNS}). In this case, we say that $\phi$ is a (not necessarily unique) \emph{monodromy} associated to $u$. \Cref{lem:stretchdef} guarantees that the monodromy may be chosen to be injective and that the stretch factors  $\rho_\phi$,  $\lambda_\phi$ and characteristic polynomial of $\phi_*$ acting on $H_1(B;\Z)$ (up to a monomial factor) depend only on the class $u$ and not on the chosen monodromy:

\begin{definition}
Given a primitive integral class $u\colon \Gamma\to \Z$ in $\mc C_{\mf m}$, we define its homological  and geometric  \define{stretch factors}, respectively denoted $\rho(u)$ and $\lambda(u)$, to be the stretch factors $\rho_\phi$ and $\lambda_\phi$ of any monodromy $\phi$ associated to $u$. We call $u$ \define{orientable} if $\lambda(u) = \rho(u)$; such classes are moreover either \define{positively} or \define{negatively orientable}, respectively, 
if $\lambda(u)$ or $-\lambda(u)$ is a root of the characteristic polynomial of an associated monodromy $\phi$.
\end{definition}

The next proposition justifies this terminology and demonstrates that the various ways a cohomology class can be orientable all agree.

\begin{proposition} \label{prop:all_orient}
The following are equivalent for  primitive integral classes $u \in \mc C_{\mf m}$:
\begin{enumerate}
\item $u$ is orientable, i.e.~$\lambda(u) = \rho(u)$,
\item any injective monodromy $\varphi_u$ associated to $u$ has an orientable graph map representative,
\item for any compatible cross section $\Theta_u$ dual to $u$, the associated first return map $f_u \colon \Theta_u \to \Theta_u$ is orientable.
\end{enumerate}
Moreover, positive (negative) orientability in one case implies the same in all cases.
\end{proposition}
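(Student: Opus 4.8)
The plan is to establish the cycle of implications $(3)\Rightarrow(2)\Rightarrow(1)\Rightarrow(3)$, tracking positivity/negativity of the orientation throughout, and to reduce everything to \Cref{th:stretch_or} and \Cref{th:stretch_or_auto} together with the cross-section machinery of \S\ref{sec:fmt}. First I would record two preliminary facts that make the statement essentially immediate. Fact one: for a primitive integral class $u\in\mc C_{\mf m}$, the dual compatible cross section $\Theta_u$ of $X^\dagger$ has a first return map $f_u\colon\Theta_u\to\Theta_u$ which, by \cite[Proposition 7.7]{DKL2} and \cite[Corollary 7.9]{DKL1}, is an irreducible (hence primitive, by \Cref{lem:get_prim} since the Whitehead graphs are connected) train track map whose mapping torus $X_{f_u}$ is homotopy equivalent to $X^\dagger$ via the flow-preserving equivalence $q_{\Theta_u}$ of \eqref{eq:flowhmtpy}, with $[\Theta_u]$ pulling back to $u$. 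Fact two: such an $f_u$ is itself a topological representative of \emph{some} monodromy $\phi_u\colon B\to B$ associated to $u$ (with $B=\pi_1(\Theta_u)\cong\pi_1(X_{f_u})$), and conversely any injective monodromy $\varphi_u$ associated to $u$ has an irreducible train track graph-map representative by \Cref{lem:get_prim} (noting that since $\Gamma=\pi_1(X^\dagger)$ is free-by-cyclic and $u\in\BNS(\Gamma)$, the base $B$ is free, and $\phi_u$ is fully irreducible by \cite{DKL-endos,mutanguha--invariance_of_iwip} in the atoroidal case — though for this proposition full irreducibility is not needed, only primitivity of the train track).

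With these in hand, $(3)\Rightarrow(2)$ is nearly tautological: the first return map $f_u$ is one choice of train track representative of an associated monodromy, so if $f_u$ is (pos/neg-)orientable then that monodromy $\phi_u$ has an orientable graph-map representative. For $(2)\Rightarrow(1)$: if $\varphi_u$ has an orientable graph-map representative $g\colon H\to H$, then after collapsing an invariant forest we may assume $g$ is an irreducible, hence primitive, train track (orientable maps are automatically train tracks, as noted after \Cref{def:orientable}); \Cref{th:stretch_or} then gives $\lambda_g=\rho_g$, with $+\lambda_g$ (resp.\ $-\lambda_g$) an eigenvalue of $g_*$ in the pos- (resp.\ neg-)orientable case. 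Since $g$ represents $\varphi_u$ we have $\lambda_{\varphi_u}=\lambda_g$ and $\rho_{\varphi_u}=\rho_g$ (\S\ref{sec:tt_reps}), and by \Cref{lem:stretchdef} these equal $\lambda(u)$ and $\rho(u)$ respectively; moreover the characteristic polynomial of $g_*$ agrees with that of any other associated monodromy up to a monomial (\Cref{lem:stretchdef}, \Cref{rmk:char_pol}), so the sign of the relevant eigenvalue $\pm\lambda(u)$ is well defined. Hence $u$ is (pos/neg-)orientable in the sense of the definition preceding the proposition. Finally $(1)\Rightarrow(3)$: suppose $\lambda(u)=\rho(u)$, and let $f_u\colon\Theta_u\to\Theta_u$ be the first return map of a compatible cross section dual to $u$. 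As recorded above, $f_u$ is a primitive train track map; computing with the homotopy equivalence $q_{\Theta_u}$ and \Cref{rmk:char_pol} (applied to $f_u$ and to a graph-map representative of any injective monodromy associated to $u$, both of whose mapping tori are $\Gamma$ with dual class $u$), the characteristic polynomial of $(f_u)_*$ agrees up to a unit in $\Z[t^{\pm1}]$ with that of the associated monodromy, so $\rho_{f_u}=\rho(u)$ and, since $f_u$ is a train track representative, $\lambda_{f_u}=\lambda(u)$ as well. Thus $\lambda_{f_u}=\rho_{f_u}$, and $\pm\lambda_{f_u}$ is an eigenvalue of $(f_u)_*$ exactly according to the pos/neg type of $u$; \Cref{th:stretch_or} then forces $f_u$ to be (pos/neg-)orientable. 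The "moreover" about matching signs is carried along at each arrow, since at every step we identify the characteristic polynomial of the homological action up to $\pm t^k$ and read off whether $+\lambda$ or $-\lambda$ is the Perron root.

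The main obstacle, and the only place requiring care, is the identification of homological data across the two (a priori different) presentations of $\Gamma$ as an HNN extension dual to $u$ — namely the presentation coming from the cross-section return map $f_u$ versus one coming from an abstract injective monodromy $\varphi_u$ of a free group. The monodromy is genuinely non-unique (as emphasized in \S\ref{sec:intro_BNS}), so one cannot compare $f_u$ and $\varphi_u$ directly; instead the comparison must go entirely through invariants of $\Gamma$ and $u$. This is exactly what \Cref{rmk:char_pol} (built on the determinant formula \Cref{th:det_form} and \Cref{lem:spec_char_poly}) supplies: the characteristic polynomial of the monodromy's action on homology depends only on $(\Gamma,u)$ up to a monomial, hence so does its spectral radius and, crucially, the sign of its largest-modulus eigenvalue. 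Once this invariance is invoked, the spectral-radius equality $\lambda(u)=\rho(u)$ and the pos/neg distinction transfer unambiguously between $f_u$, $\varphi_u$, and any other associated monodromy, and the proposition follows by assembling the implications above.
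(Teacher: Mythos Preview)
Your overall strategy---reduce everything to \Cref{th:stretch_or} by establishing primitivity and identifying stretch factors---is the same as the paper's. But your cycle $(3)\Rightarrow(2)\Rightarrow(1)\Rightarrow(3)$ has a genuine gap at the first arrow, and the gap stems precisely from dismissing full irreducibility of the injective monodromy as ``not needed.''

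Statement (2) asserts that \emph{any} injective monodromy $\varphi_u$ has an orientable representative. Your $(3)\Rightarrow(2)$ only shows that the particular (possibly non-injective) monodromy represented by $f_u$ has one---namely $f_u$ itself. This does not address an arbitrary injective $\varphi_u$, whose base group may have different rank. To close this, one must go through (1): from $\lambda(u)=\rho(u)$ deduce $\lambda_{\varphi_u}=\rho_{\varphi_u}$, and then produce an orientable representative of $\varphi_u$. But producing that representative requires applying \Cref{th:stretch_or} to a \emph{primitive} train track for $\varphi_u$, and the only way the paper gets such a primitive representative is via \Cref{lem:get_prim}, whose hypotheses demand that $\varphi_u$ itself be fully irreducible or pseudo-Anosov. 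The paper secures this with the atoroidal/toroidal dichotomy (citing \cite{DKL-endos,mutanguha--invariance_of_iwip} and Thurston respectively); you cite the atoroidal case in passing but then explicitly discard it. Without it, the implication toward (2) does not go through.

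A secondary point: your justification that $f_u$ is primitive invokes \Cref{lem:get_prim}, but that lemma is about train track representatives of a fully irreducible or pseudo-Anosov endomorphism, and the endomorphism $f_u$ induces on $\pi_1(\Theta_u)$ need not be injective (see \Cref{rem:non-injective-cross-sections}). The paper instead uses \cite[Proposition 2.6]{mutanguha2020irreducible}, which derives primitivity directly from irreducibility plus connected Whitehead graphs, as a property of the graph map rather than of the endomorphism.
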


\begin{proof} 
If $\varphi$ is atoroidal, then $\Gamma$ is hyperbolic and $\varphi_u$ is also fully irreducible by \cite[Theorem 1.2]{DKL-endos} or \cite[Theorem 4.5]{mutanguha--invariance_of_iwip}.
Otherwise, $\varphi$ is toroidal and represented by a pseudo-Anosov homeomorphism on a once-punctured surface \cite[Theorem 4.1]{BestvinaHandel-TTs}, in which case $\mc C_{\mf m}$ is the cone over a fibered face of the Thurston norm ball and all associated monodromies, including $\varphi_u$, represent pseudo-Anosov homeomorphisms \cite{thurston1986norm}
 (see also \cite[Theorem 3.4]{mutanguha--invariance_of_iwip}). 
In either case, \Cref{lem:get_prim} implies that $\varphi_u$ admits a primitive train track representative $F_u$.

The first return map $f_u \colon \Theta_u \to \Theta_u$ is an irreducible train track map with connected Whitehead graphs. Hence, it is also primitive by \cite[Proposition 2.6]{mutanguha2020irreducible}.  By definition, the homomorphism on $\pi_1$ induced by $f_u$ is a monodromy associated to $u$, although it may not be injective. Thus 
\Cref{lem:stretchdef} implies $\lambda_{f_u} = \lambda(u) = \lambda_{F_u}$ and $\rho_{f_u} = \rho(u) = \rho_{F_u}$. The proposition now follows immediately from \Cref{th:stretch_or}.
\end{proof}

\begin{remark}
\label{rem:iso_stretch_implies_orient_endo}
As indicated in the proof above, if $\varphi$ is atoroidal, then any injective monodromy $\varphi_u$ associated to a class $u\in \mc C_{\mf m}$ is also a fully irreducible free group endomorphism. Hence in this case \Cref{thm:orientable-auto} implies the conditions in \Cref{prop:all_orient} are also equivalent to the monodromy $\varphi_u$ itself being (pos/neg) orientable.
\end{remark}

\subsection{Positively orientable case} 
\label{sec:orient_polys}

In this subsection, we relate the two polynomial invariants of a positively orientable fully irreducible automorphism. 

\begin{theorem} \label{th:polynomials}
Suppose the fully irreducible automorphism $\varphi$ is positively orientable. 
If $\mathrm{rank}(H_1(\Gamma))\ge 2$, then the McMullen and Alexander polynomials are related by
\[
\mf m \doteq \Delta_\Gamma \cdot \mf p,
\]
Otherwise, the equation is  $\mf m\cdot (1-z) \doteq \Delta_\Gamma \cdot \mf p$, where $\dt$ generates $H_1(\Gamma) / \mathrm{torsion}$.
\end{theorem}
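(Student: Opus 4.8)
The plan is to package the three determinant formulas already at our disposal into the single identity $\Delta_\Gamma\cdot\mf p\doteq\mf m\cdot r$, with positive orientability being exactly the input that makes the lifted transition matrix and the lifted chain matrix of $f$ coincide. To set this up, note that $f$, being an irreducible train track representative of the positively orientable automorphism $\varphi$, is itself positively orientable by \Cref{th:stretch_or_auto}; fix an orientation on $G$ witnessing this, and use it to build the mapping torus $X_f$ and its universal free abelian cover exactly as in \S\ref{sec:mapping_tori}--\S\ref{sec:abelian_cover}. Fix a component $\wt G_0$ and a lift $\wt f\colon\wt G_0\to\wt G_0$, giving a splitting $H=K\oplus\langle\dt\rangle$ together with $\Z[K]$--matrices $\wt M$, $\wt P$, and $\wt A$ describing, respectively, the action of $\wt f$ on $C_1(\wt G_0)$, on $C_0(\wt G_0)$, and on the lift of the transition matrix of $f$. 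Since $q\colon X_f\to X^\dagger$ is a homotopy equivalence (\S\ref{sec:folded}) we have $\pi_1(X_f)\cong\Gamma$ with first homology $H$, so three results from before apply in these same coordinates: \Cref{th:det_form} gives $\Delta_\Gamma\doteq\det(\dt I-\wt M)/\det(\dt I-\wt P)\cdot r$ with $r=1$ when $\mathrm{rank}(H_1(\Gamma))\ge2$ and $r=\dt-1$ when $\mathrm{rank}(H_1(\Gamma))=1$; \Cref{lem:detP} (equivalently \Cref{lem:vertex_poly_formula}) gives $\mf p\doteq\det(\dt I-\wt P)$; and \Cref{prop:alt_def} (or \Cref{thm:mcpoly_det_formula} applied to the base cross section, whose first return map is $f$) gives $\mf m\doteq\det(\dt I-\wt A)$.

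The crux is the equality $\wt M=\wt A$. At the level of integer matrices this is \Cref{fact:matrix}: with the chosen orientation, $M=A$. Lifting, each edge path $\posgen\wt f(\wt e)$ crosses every $K$--translate of a base edge-lift only with its positive orientation, because $f(e)$ is a positive edge path; hence the signed crossing counts defining the entries of $\wt M$ equal the unsigned counts defining $\wt A$. (A mismatch in the auxiliary base-lift choices of the two constructions at worst conjugates $\wt M$ and $\wt A$ by a diagonal matrix of units, which leaves the determinants unchanged.) Consequently $\det(\dt I-\wt M)\doteq\det(\dt I-\wt A)\doteq\mf m$, and substituting into the formula for $\Delta_\Gamma$ gives
\[
\Delta_\Gamma\cdot\mf p\;\doteq\;\frac{\det(\dt I-\wt M)}{\det(\dt I-\wt P)}\cdot r\cdot\det(\dt I-\wt P)\;\doteq\;\mf m\cdot r.
\]
When $\mathrm{rank}(H_1(\Gamma))\ge2$ this is $\Delta_\Gamma\cdot\mf p\doteq\mf m$, and when $\mathrm{rank}(H_1(\Gamma))=1$, where $r=\dt-1\doteq1-\dt$, it is $\Delta_\Gamma\cdot\mf p\doteq\mf m\cdot(1-\dt)$; these are the two asserted relations.

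The step I expect to demand the most care is bookkeeping rather than mathematics: checking that the component $\wt G_0$, the lift $\wt f$, the splitting $H=K\oplus\langle\dt\rangle$, and the base lifts of cells can all be chosen simultaneously so that the matrices $\wt M$, $\wt P$, and $\wt A$ occurring in \Cref{th:det_form}, \Cref{lem:detP}/\Cref{lem:vertex_poly_formula}, and \Cref{prop:alt_def}/\Cref{thm:mcpoly_det_formula} are the very same $\Z[K]$--matrices (in particular that the base cross section of $X^\dagger$ does return to $f$). With that alignment secured, the only genuinely new ingredient is the lifted form of \Cref{fact:matrix}, namely $\wt M=\wt A$, which is the sole place positive orientability is used.
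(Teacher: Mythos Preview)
Your proposal is correct and follows essentially the same approach as the paper: use positive orientability (via \Cref{th:stretch_or_auto} and \Cref{fact:matrix}) to conclude $\wt M=\wt A$, then combine the determinant formulas \Cref{th:det_form}, \Cref{thm:mcpoly_det_formula}, and \Cref{lem:vertex_poly_formula}. The paper's proof is a terse three sentences, and you have simply unpacked the bookkeeping it leaves implicit (in particular the alignment of base lifts and the splitting $H=K\oplus\langle\dt\rangle$ across the three formulas); your caution about possible unit discrepancies from different base-lift choices is well placed but, as you note, immaterial for an equality up to units.
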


\begin{proof}
By \Cref{fact:matrix}, for a positively orientable graph map the induced map on cellular $1$-chains and the transition matrix are equal. Lifting to a map $\wt f \colon \wt G_0 \to \wt G_0$, the matrices used in the definition of $\mf m$ and in \Cref{th:det_form} are equal, i.e.~$\wt A = \wt M$. This, together with the 
characterizations of $\mf m$ in \Cref{thm:mcpoly_det_formula} and $\mf p$  in  \Cref{lem:vertex_poly_formula}, completes the proof. 
\end{proof}

\begin{theorem} \label{th:orientable_cone}
If some primitive integral class in $\mc C_{\mf m}$ is positively orientable then so is every primitive integral class in $\mc C_{\mf m}$.
\end{theorem}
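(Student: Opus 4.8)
The key observation is that the identity of \Cref{th:polynomials} is a relation among the three elements $\mf m,\Delta_\Gamma,\mf p$ of the ring $\Z[H]$, so --- although each is computed from a particular train track map or cross section --- the relation itself does not depend on that choice. The plan is therefore to extract this identity from the hypothesized positively orientable class and then feed it back in at every other primitive integral class of $\mc C_{\mf m}$ to force positive orientability there. First I would dispose of the case $\mathrm{rank}(H_1(\Gamma;\R))=1$: then $\mc C_{\mf m}$ is an open half-line in a one-dimensional vector space, hence contains a unique primitive integral class and the statement is vacuous. Assume henceforth that $\mathrm{rank}(H_1(\Gamma;\R))\ge 2$.

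Next, given a positively orientable primitive integral class $u_0\in\mc C_{\mf m}$, I would pick a compatible cross section $\Theta_0$ dual to $u_0$ with first return map $f_0\colon\Theta_0\to\Theta_0$, a primitive train track map. By \Cref{prop:all_orient}, $f_0$ is positively orientable, so by \Cref{fact:matrix} a suitable orientation on $\Theta_0$ makes the transition matrix of $f_0$ equal to its matrix on $1$--chains; lifting to the chosen lift, this gives $\wt A_{\Theta_0}=\wt M_{\Theta_0}$ in the splitting $H=K_{\Theta_0}\oplus\langle\dt_{\Theta_0}\rangle$. Since the mapping torus $X_{f_0}$ has fundamental group $\Gamma$, combining the determinant formula \Cref{thm:mcpoly_det_formula} for $\mf m$, the determinant formula \Cref{th:det_form} for $\Delta_\Gamma$ (with $r=1$, as the rank is $\ge 2$), and the determinant formula \Cref{lem:vertex_poly_formula} for $\mf p$ --- all expressed in this one splitting, as in the proof of \Cref{th:polynomials} --- yields the identity $\mf m\doteq\Delta_\Gamma\cdot\mf p$ in $\Z[H]$.

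The heart of the argument is then a specialization step. Fix an arbitrary primitive integral $u\in\mc C_{\mf m}$, a compatible cross section $\Theta$ dual to $u$, and its first return map $f_\Theta$, which is a primitive train track map with $\lambda(u)=\lambda_{f_\Theta}>1$ and $\rho(u)=\rho_{f_\Theta}$. Specializing $\mf m\doteq\Delta_\Gamma\cdot\mf p$ at $u$ gives $\mf m^u(t)\doteq\Delta_\Gamma^u(t)\cdot\mf p^u(t)$. By \eqref{eqn:specialized_det_formula}, $\mf m^u(t)\doteq\det(t^{-1}I-A_\Theta)$, which vanishes at $t=\lambda(u)^{-1}$ since $\lambda(u)$ is an eigenvalue of the transition matrix $A_\Theta$. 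On the other hand, $\mf p^u(t)=\prod_i(1-t^{u(c_i)})$ for the vertex cycles $c_i$ of $\flow^\dagger$, and $u(c_i)>0$ for every $i$ because $u\in\mc C_{\mf m}$ is positive on every closed orbit of $\flow^\dagger$ (\Cref{thm:DKL-cone-theorem}); since $\lambda(u)>1$ this forces $\mf p^u(\lambda(u)^{-1})\ne 0$. Hence $\Delta_\Gamma^u(\lambda(u)^{-1})=0$. Applying \Cref{lem:spec_char_poly} to $X_{f_\Theta}$ gives $\det(t^{-1}I-(f_\Theta)_*)\doteq(1-t)\,\Delta_\Gamma^u(t)$, so $\lambda(u)^{-1}$ is a root of $\det(t^{-1}I-(f_\Theta)_*)$ as well; equivalently $\lambda(u)$ is an eigenvalue of $(f_\Theta)_*$ acting on $H_1(\Theta)$. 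Together with the universal inequality $\rho_{f_\Theta}\le\lambda_{f_\Theta}$ this yields $\rho_{f_\Theta}=\lambda_{f_\Theta}$ with $+\lambda_{f_\Theta}$ an eigenvalue of $(f_\Theta)_*$, so $f_\Theta$ is positively orientable by \Cref{th:stretch_or} and therefore $u$ is positively orientable by \Cref{prop:all_orient}.

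I expect the only real obstacle to be the specialization step, and specifically the need to know that the ``extra'' factor $\mf p$ cannot absorb the Perron root --- i.e.\ that $\mf p^u(\lambda(u)^{-1})\ne 0$ --- so that this root of $\mf m^u$ is forced into $\Delta_\Gamma^u$ and thence into the characteristic polynomial of the return map. This is precisely where membership of $u$ in $\mc C_{\mf m}$ (hence positivity of $u$ on the vertex cycles) is used, and it is what lets the conclusion propagate across the whole cone rather than depending on the chosen monodromy. The remaining points --- matching the various splittings so the determinant formulas genuinely compose into an identity in $\Z[H]$, and identifying the Alexander polynomials of $X_{f_0}$, $X_{f_\Theta}$ and $X^\dagger$ via the flow-preserving homotopy equivalences --- are routine bookkeeping already carried out in \S\ref{sec:fmt} and the proof of \Cref{th:polynomials}.
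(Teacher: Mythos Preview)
Your proposal is correct and follows essentially the same route as the paper: derive the identity $\mf m\doteq\Delta_\Gamma\cdot\mf p$ from the one positively orientable class, specialize at an arbitrary primitive integral $u\in\mc C_{\mf m}$, observe that $\mf p^u$ cannot absorb the Perron root (the paper phrases this as ``the roots of $\mf p^w$ are roots of unity,'' which is the same observation as yours), and then invoke \Cref{lem:spec_char_poly} to see that $\lambda(u)$ is an eigenvalue of the monodromy on homology. Your explicit disposal of the rank--$1$ case and your closing appeal to \Cref{th:stretch_or} are cosmetic differences only.
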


\begin{proof}
Let $u$ be a primitive integral class in $\mc C_{\mf m}$ represented by a cross section $\Theta = \Theta_u$ compatible with the induced semiflow (\Cref{thm:DKL-cone-theorem}) and let $f_\Theta \colon \Theta \to \Theta$ denote its first return map. 
As in \Cref{eq:flowhmtpy}, there is a flow preserving homotopy equivalence 
\[
q_{\Theta} \colon X_{f_{\Theta}} \to X^\dagger
\]
sending vertex leaves to vertex leaves, which we use to identify $H_1(X_{f_\Theta})$ and $H_1(X^\dagger)$. 
Also, let $\wt A_{\Theta}, \wt M_{\Theta}, \wt P_{\Theta}$ be the $\mathbb{Z}[H]$-valued matrices appearing in the characterization of $\mf m$ from \Cref{thm:mcpoly_det_formula} and the determinant formula for $\Delta_\Gamma$ (\Cref{th:det_form}), associated to $f_\Theta$. 

Suppose that the class $u$ is positively orientable so that by \Cref{prop:all_orient}
the map $f_\Theta$ is also positively orientable.
This implies that $\wt A_{\Theta} = \wt M_{\Theta}$ just as in \Cref{th:polynomials} and using \Cref{th:det_form} and \Cref{lem:vertex_poly_formula} we conclude that
\[
\mf m \doteq \Delta_\Gamma \cdot \mf p.
\]

Now let $w$ be any other primitive integral class in $\mc C_{\mf m}$. The above formula gives the equality of specializations: $\mf m^w(t) \doteq \Delta_\Gamma^w(t) \cdot \mf p^w(t)$. Moreover, from the definition of $\mf p$ we see that all the zeroes of $\mf p^w(t)$ are roots of unity. Hence, if $\lambda>1$ is equal to $\lambda(w)$, then $\lambda^{-1}$ is the smallest root of $\mf m^w(t)$ \eqref{eqn:specialized_det_formula} and hence of $\Delta_\Gamma^w(t)$. By \Cref{lem:spec_char_poly}, this gives that $\lambda$ is the largest root of the characteristic polynomial of any monodromy associated to $w$. 
This implies that $w$ is positively orientable and completes the proof.
\end{proof}

\subsection{Negatively orientable case}\label{sec:anti-orientable}
We now relate the polynomial invariants of a negatively orientable fully irreducible outer automorphism. Given that $\mc C_{\mf m}$ contains a negatively orientable class, we first define an involution $\iota \colon \Z[H] \to \Z[H]$. 

Let $u \in \mc C_{\mf m}$ be a negatively orientable primitive integral class. If $\Theta = \Theta_u$ is a cross section dual to $u$, then its return map $f_\Theta$ is negatively orientable by \Cref{prop:all_orient}. In this case, with notation as in the proof of \Cref{th:orientable_cone}, $\wt A_\Theta = - \wt M_\Theta$.  

Define $\epsilon_u\colon H \to \{1,-1\}$ to be the homomorphism given by setting $\epsilon_u(h) = 1$ if $u(h)$ is even and $\epsilon_u(h) = -1$ if $u(h)$ is odd. In other words, $\epsilon_u$ is the reduction of $u$ mod $2$ (where $\mathbb{Z}/2 = \{1,-1\}$) and we often write $\epsilon_u = (-1)^u$. 
Define a ring automorphism $\iota_u \colon \mathbb{Z}[H] \to \mathbb{Z}[H]$ extending  $h \mapsto \epsilon_u(h)h$. Note that $\iota_u \colon \mathbb{Z}[H] \to \mathbb{Z}[H]$ is an involution. We observe that
\begin{align} \label{eq:anti_calc}
\iota_u(\mf m) &\doteq \iota_u (\det(\dt_\Theta I - \wt A_\Theta))  \\
&= \det(-\dt_\Theta I - \wt A_\Theta)) \nonumber \\ 
&\doteq \det (\dt_\Theta I - \wt M_\Theta) \nonumber \\
& \doteq \Delta_\Gamma \cdot \mf p \nonumber 
\end{align}

A priori, however, the class $\epsilon_u$ depends on $u$. The following lemma states that each negatively orientable class in $\mc C_{\mf m}$ determines the same class, which we call the \emph{orientation class} $\epsilon \colon \Gamma \to \{-1,1\}$. We define the associated ring involution $\iota \colon \mathbb{Z}[H] \to \mathbb{Z}[H]$ accordingly.

\begin{proposition} \label{lem:or_class}
If $u$ and $\zeta$ are negatively orientable classes in $\mc C_{\mf m}$, then $\epsilon_u = \epsilon_\zeta$.
Hence, if $\mc C_{\mf m}$ contains a single negatively orientable class, then $\iota (\mf m) = \Delta_\Gamma \cdot \mf p$.
\end{proposition}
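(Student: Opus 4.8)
The plan is to first prove the statement that $\epsilon_u = \epsilon_\zeta$ for any two negatively orientable primitive integral classes $u,\zeta\in\mc C_{\mf m}$, and then read off the ``hence'' clause directly from the computation \eqref{eq:anti_calc}. The starting point is to record what \eqref{eq:anti_calc} gives for \emph{each} negatively orientable class: following the setup preceding \eqref{eq:anti_calc} (pick a compatible cross section dual to the class, observe $\wt A_\Theta = -\wt M_\Theta$), one obtains $\iota_v(\mf m)\doteq \Delta_\Gamma\cdot\mf p$ for every negatively orientable primitive integral $v\in\mc C_{\mf m}$. Applying this to $u$ and to $\zeta$ yields $\iota_u(\mf m)\doteq\iota_\zeta(\mf m)$. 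Since each $\iota_v$ is a ring involution that is diagonal in the group-element basis of $\Z[H]$ (it sends $h$ to $\pm h$), the involutions $\iota_u$ and $\iota_\zeta$ commute, and applying $\iota_\zeta$ to the previous relation gives $\iota_\delta(\mf m)\doteq\mf m$, where $\delta = \epsilon_u\epsilon_\zeta\colon H\to\{1,-1\}$ and $\iota_\delta := \iota_u\circ\iota_\zeta$ is the ring involution extending $h\mapsto\delta(h)\,h$. The remaining task is to show that the homomorphism $\delta$ is trivial.

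For this I would pass to the distinguished normalization $\mf m'$ of \Cref{prop:alt_def}. Because $\iota_\delta$ only rescales each monomial of $\mf m'$ by $\pm1$, it does not change supports: $\supp(\iota_\delta(\mf m')) = \supp(\mf m')$. In particular $1\in\supp(\iota_\delta(\mf m'))$, and for any $u'\in\mc C_{\mf m}$ the class $u'$ is positive on $\supp(\iota_\delta(\mf m'))\setminus\{1\}$; moreover $\iota_\delta(\mf m')\doteq\iota_\delta(\mf m)\doteq\mf m$, so $\iota_\delta(\mf m')$ is a normalization of $\mf m$ satisfying the two defining properties in \Cref{prop:alt_def}. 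By the uniqueness asserted there, $\iota_\delta(\mf m') = \pm\,\mf m'$. Comparing coefficients term by term forces $\delta$ to take a single constant value $\pm1$ on all of $\supp(\mf m')$; since $1\in\supp(\mf m')$ and $\delta(1)=1$, that constant is $1$, so $\delta\equiv1$ on $\supp(\mf m')$. Finally, \Cref{prop:mcgenerate} says $\supp(\mf m')$ generates $H$ as an abelian group, so $\delta$ is the trivial homomorphism, i.e.\ $\epsilon_u=\epsilon_\zeta$. The ``hence'' statement is then immediate: if $\mc C_{\mf m}$ contains a unique negatively orientable class $u$, then $\epsilon=\epsilon_u$ and $\iota=\iota_u$ are well defined, and \eqref{eq:anti_calc} gives $\iota(\mf m)\doteq\Delta_\Gamma\cdot\mf p$.

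I expect the only delicate point to be the reduction to $\mf m'$: one must verify carefully that $\iota_\delta(\mf m')$ genuinely meets both hypotheses of the uniqueness clause in \Cref{prop:alt_def} (support containing $1$ and $\mc C_{\mf m}$-positivity of the rest of the support) so that $\iota_\delta(\mf m')=\pm\mf m'$ is legitimate, and to carry the harmless global sign through the coefficient comparison. Everything else — the commuting-involutions bookkeeping and the appeal to \Cref{prop:mcgenerate} — is purely formal.
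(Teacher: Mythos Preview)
Your proposal is correct and follows essentially the same approach as the paper: use \eqref{eq:anti_calc} to get $\iota_u(\mf m)\doteq\iota_\zeta(\mf m)$, pass to the canonical normalization $\mf m'$ of \Cref{prop:alt_def}, use uniqueness of $\mf m'$ to pin down the unit, and then compare coefficients and invoke \Cref{prop:mcgenerate}. Your introduction of $\delta=\epsilon_u\epsilon_\zeta$ and explicit handling of the possible global sign is a cosmetic repackaging of the paper's argument (and in fact slightly more careful, since the uniqueness in \Cref{prop:alt_def} is a priori only up to $\pm1$).
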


\begin{proof}
By \Cref{eq:anti_calc}, $\iota_u(\mf m) \doteq \iota_\zeta (\mf m)$. We claim that if $\mf m'$ is the normalization defined in \Cref{prop:alt_def}, then $\iota_u(\mf m') = \iota_\zeta (\mf m')$ as elements of $\Z[H]$. Indeed, we know that $\mf m' \doteq (\iota_u \circ \iota_\zeta) (\mf m')$. But both sides of this equation satisfy the normalization in \Cref{prop:alt_def} (since $\iota_u$ and $\iota_\zeta$ do not change the support) and so must be equal by uniqueness. This proves the claim.

Finally, if $\mf m' = \sum a_g g$, then equating coefficients gives that $\epsilon_u(g) = \epsilon_\zeta (g)$ for each $g \in \mathrm{supp}(\mf m')$. Since $\mathrm{supp}(\mf m')$ generates $H$ by \Cref{prop:mcgenerate}, we see that $\epsilon_u = \epsilon_\zeta$.
The second statement now follows from \Cref{eq:anti_calc}.
\end{proof}

\begin{theorem}\label{th:anti_cone}
Suppose that $\mc C_{\mf m}$ contains a negatively orientable class $u$. Then for any other primitive integral class $\zeta \in \mc C_{\mf m}$, the following are equivalent:
\begin{enumerate}
\item $\lambda(\zeta) = \rho(\zeta)$,
\item $\zeta$ is negatively orientable,
\item $\zeta = u  \text{ mod } 2$.
\end{enumerate}
\end{theorem}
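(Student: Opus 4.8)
The plan is to prove the cyclic chain of implications $(1)\Rightarrow(3)\Rightarrow(2)\Rightarrow(1)$, leveraging the polynomial identity $\iota(\mf m) \doteq \Delta_\Gamma \cdot \mf p$ from \Cref{lem:or_class} (which applies since, by hypothesis, $\mc C_{\mf m}$ contains the negatively orientable class $u$) together with the specialization facts established in \S\ref{sec:relating} and \S\ref{sec:fmt}. Throughout, recall that $\iota$ is the ring involution extending $h \mapsto \epsilon(h)h = (-1)^{\epsilon(h)} h$, where $\epsilon = \epsilon_u$ is the orientation class, and that for the reduction mod $2$ comparison we are asking whether $\zeta$ and $u$ induce the same homomorphism $H \to \Z/2$, equivalently whether $\epsilon_\zeta = \epsilon$.

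\textbf{Step 1: $(3)\Rightarrow(2)$.} Suppose $\zeta = u \bmod 2$, i.e.\ $\epsilon_\zeta = \epsilon$. Specializing the identity $\iota(\mf m) \doteq \Delta_\Gamma\cdot\mf p$ at $\zeta$ and using that specialization at $\zeta$ intertwines $\iota$ with the sign substitution $t \mapsto -t$ precisely because $\epsilon_\zeta = \epsilon$ (so $\zeta(\iota(h)) = \zeta(h) + u(h) \equiv 2\zeta(h) \equiv 0$, hence $t^{\zeta(\iota(h))} = t^{\zeta(h)}\cdot(-1)^{u(h)}$, matching $h \mapsto -t^{\zeta(h)}$ up to the global sign built into $\iota$), we obtain $\mf m^\zeta(-t) \doteq \Delta_\Gamma^\zeta(t)\cdot\mf p^\zeta(t)$ up to a unit in $\Z[t^{\pm1}]$. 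Since by \eqref{eqn:specialized_det_formula} the smallest root of $\mf m^\zeta(t)$ is $\lambda(\zeta)^{-1}$ and all roots of $\mf p^\zeta(t)$ are roots of unity, the root of $\mf m^\zeta(-t)$ of smallest modulus is $-\lambda(\zeta)^{-1}$, which must therefore be a root of $\Delta_\Gamma^\zeta(t)$. By \Cref{lem:spec_char_poly}, $-\lambda(\zeta)$ is then a root of the characteristic polynomial of any monodromy associated to $\zeta$, so $\zeta$ is negatively orientable (and $\lambda(\zeta) = \rho(\zeta)$ follows as well, giving $(2)$).

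\textbf{Step 2: $(2)\Rightarrow(1)$.} This is immediate from the definition: if $-\lambda(\zeta)$ is a root of the characteristic polynomial of a monodromy associated to $\zeta$, then $\rho(\zeta) \ge \lambda(\zeta)$, and since always $\rho(\zeta)\le\lambda(\zeta)$ we get equality. (Alternatively invoke \Cref{prop:all_orient} and \Cref{th:stretch_or}.)

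\textbf{Step 3: $(1)\Rightarrow(3)$, the main obstacle.} Assume $\lambda(\zeta) = \rho(\zeta)$. By \Cref{prop:all_orient} the class $\zeta$ is then orientable, hence either positively or negatively orientable, and by \eqref{eq:anti_calc} applied to $\zeta$ we have $\iota_\zeta(\mf m) \doteq \Delta_\Gamma\cdot\mf p$ in the negatively orientable subcase and $\mf m \doteq \Delta_\Gamma\cdot\mf p$ in the positively orientable subcase. The key point is to rule out that $\zeta$ is positively orientable: if it were, then combining $\mf m \doteq \Delta_\Gamma\cdot\mf p$ with the standing identity $\iota(\mf m)\doteq\Delta_\Gamma\cdot\mf p$ gives $\iota(\mf m)\doteq\mf m$; passing to the normalization $\mf m'$ of \Cref{prop:alt_def} (which $\iota$ preserves, since $\iota$ does not change supports and $\iota$ of the distinguished normalization still satisfies the normalization conditions), uniqueness forces $\iota(\mf m') = \mf m'$, i.e.\ $\epsilon(g) = 1$ for every $g\in\supp(\mf m')$; but $\supp(\mf m')$ generates $H$ by \Cref{prop:mcgenerate}, so $\epsilon \equiv 1$, contradicting that $u$ is \emph{negatively} (not positively) orientable — indeed $\epsilon = \epsilon_u$ is nontrivial since a negatively orientable graph map is not positively orientable, so $-\lambda(u)$ rather than $+\lambda(u)$ is the relevant homological eigenvalue and \Cref{th:polynomials,th:orientable_cone} would be violated. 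Therefore $\zeta$ is negatively orientable, so $\iota_\zeta(\mf m)\doteq\Delta_\Gamma\cdot\mf p\doteq\iota_u(\mf m)$, and exactly the normalization argument of \Cref{lem:or_class} shows $\iota_\zeta(\mf m') = \iota_u(\mf m') = \iota(\mf m')$; equating coefficients over $\supp(\mf m')$ gives $\epsilon_\zeta(g) = \epsilon(g)$ for all such $g$, and since these generate $H$ we conclude $\epsilon_\zeta = \epsilon$, i.e.\ $\zeta = u \bmod 2$. The subtlety to handle carefully here is the bookkeeping of global units versus the genuine involution on normalized polynomials — this is where I expect to spend the most care, but \Cref{prop:alt_def} and \Cref{prop:mcgenerate} supply exactly the rigidity needed.
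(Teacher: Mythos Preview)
Your proof is correct and uses the same core ingredients as the paper: the polynomial identity $\iota(\mf m)\doteq\Delta_\Gamma\cdot\mf p$ from \Cref{lem:or_class}, the specialization computation $\iota(\mf m)^\zeta(t)=\mf m^\zeta(-t)$ when $\epsilon_\zeta=\epsilon$, and \Cref{lem:or_class} again for $(2)\Rightarrow(3)$. The paper's route is slightly more efficient in one place: rather than your normalization argument in Step~3 to rule out positive orientability of $\zeta$, it simply invokes \Cref{th:orientable_cone} --- if any class in $\mc C_{\mf m}$ were positively orientable then all would be, contradicting that $u$ is negatively orientable --- which immediately gives $(1)\Leftrightarrow(2)$ in one line. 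Your argument via $\iota(\mf m')=\mf m'$ and \Cref{prop:mcgenerate} is valid but reproves a special case of what \Cref{th:orientable_cone} already provides. Two minor cleanups: the parenthetical in Step~1 is garbled (since $\iota(h)=\epsilon(h)h=\pm h$ in $\Z[H]$, the expression ``$\zeta(\iota(h))$'' is not well-formed; the clean computation is simply $\iota(\mf m)^\zeta(t)=\sum a_g\epsilon(g)t^{\zeta(g)}=\sum a_g(-1)^{\zeta(g)}t^{\zeta(g)}=\mf m^\zeta(-t)$), and the nontriviality of $\epsilon_u$ in Step~3 follows immediately from $u$ being primitive (hence surjective onto $\Z$), without the appeal to \Cref{th:polynomials} and \Cref{th:orientable_cone} you sketch.
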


Note that if $\mc C_{\mf m}$ is a cone of cross sections that contains a class $u$ such that $\lambda(u) = \rho(u)$, then $\mc C_{\mf m}$ is covered by either \Cref{th:orientable_cone} or \Cref{th:anti_cone}.

\begin{proof}
By \Cref{th:orientable_cone}, there are no positively orientable first return maps in the cone $\mc C_{\mf m}$, so $(1)$ and $(2)$ are equivalent by definition.

Now suppose that $\zeta = u  \text{ mod } 2$. Then $\epsilon = (-1)^\zeta$.
By \Cref{lem:or_class}, we know
$\iota(\mf m) \doteq \Delta_\Gamma \cdot \mf p,$
and so specializing to $\zeta$ gives
\[
\iota(\mf m)^\zeta(t) \doteq \Delta_\Gamma^\zeta(t) \cdot \mf p^\zeta(t),
\]
where $p^\zeta(t)$ is a product of cyclotomic polynomials and  $\Delta_\Gamma^\zeta(t)$ is, up to a factor of $t^k(t-1)$,
the characteristic polynomial of any monodromy associated to $\zeta$
(see \Cref{lem:spec_char_poly}). 

We claim that $\iota(\mf m)^\zeta(t) = \mf m^\zeta(-t)$. Indeed, 
if $\mf m = \sum a_g g \in \mathbb{Z}[H]$, then
\begin{align*}
\iota(\mf m)^\zeta(t) &= \sum \epsilon(g) a_g t^{\zeta(g)}\\
&= \sum (-1)^{\zeta(g)} a_g t^{\zeta(g)}\\
&= \sum a_g (-t)^{\zeta(g)}\\
&= \mf m^\zeta(-t). 
\end{align*}
Using equation \eqref{eqn:specialized_det_formula} and \Cref{lem:spec_char_poly}, we conclude as in the proof of \Cref{th:orientable_cone} that $\zeta$ is negatively orientable and hence $(3)$ implies $(2)$.
Since $(2) \implies (3)$ follows from \Cref{lem:or_class}, this completes the proof.
\end{proof}

With these facts in hand, we can prove \Cref{thm:intro-identify_orientable_classes} from the introduction.

\begin{proof}[Proof of \Cref{thm:intro-identify_orientable_classes}]
By \Cref{thm:DKL-cone-theorem}, the component $\mc C$ of $\BNS(\Gamma)$ containing the dual class of $\free*_\varphi$ is equal to the cone of cross sections $\mc C_{\mf m}$. If some primitive integral $u \in \mc C$ is positively orientable, then so is every such class in $\mc C$ (\Cref{th:orientable_cone}) and so $\lambda(u) = \rho(u)$ for the entire cone. This is case $(1)$ from the theorem statement.

If some primitive integral $u \in \mc C$ is negatively orientable, then \Cref{th:anti_cone} gives that the primitive integral $\zeta \in \mc C$ with $\lambda(\zeta) = \rho(\zeta)$ are exactly those that equal $u$ mod $2$, thus giving case $(2)$. The final alternative is that no classes in $\mc C$ are orientable and this is exactly case $(3)$.
\end{proof}

\subsection{An equation mod $2$}
\label{sec:gen_orient_polys}
Here we show that the conclusion of \Cref{th:polynomials} holds in total generality after reducing mod $2$. 

\begin{theorem} \label{th:polynomials_mod2}
Let $\varphi$ be a fully irreducible automorphism, with associated free-by-cyclic group $\Gamma$.
If $\mathrm{rank}(H_1(\Gamma)) \ge 2$, then 
\[
\mf m \doteq \Delta_\Gamma \cdot \mf p \; (\mathrm{mod}\; 2)
\]
Otherwise, the equation holds after multiplying $\mf m$ by $(\dt -1)$, where $\dt$ generates $H_1(\Gamma) / \mathrm{torsion}$.
\end{theorem}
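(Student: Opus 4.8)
The plan is to mimic the proof of \Cref{th:polynomials}, replacing the identity $\wt A=\wt M$ (valid only when $f$ is positively orientable) by the congruence $\wt A\equiv\wt M\pmod 2$, which holds with no orientability hypothesis whatsoever.

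First I would, exactly as in the proof of \Cref{th:polynomials}, fix a lift $\wt f\colon\wt G_0\to\wt G_0$ and the induced splitting $H=K\oplus\langle\dt\rangle$, and let $\wt A$ be the (lifted) transition matrix of $\wt f$, $\wt M$ the matrix of $\wt f$ acting on simplicial $1$--chains of $\wt G_0$, and $\wt P$ its matrix on $0$--chains. Then three determinant formulas already in hand, all as equalities in $\Z[H]$, give: $\mf m\doteq\det(\dt I-\wt A)$, via \Cref{prop:alt_def} and the computation $i_*(P_D)=\det(I-\posgen\wt A)$ in its proof (equivalently \Cref{thm:mcpoly_det_formula}); $\mf p\doteq\det(\dt I-\wt P)$, via \Cref{lem:detP} together with the fact that the flow-preserving homotopy equivalence $q\colon X_f\to X^\dagger$ carries the periodic orbits of $f$ on $VG$ bijectively onto the vertex cycles of $X^\dagger$ (this is also \Cref{lem:vertex_poly_formula}); and $\Delta_\Gamma\cdot\det(\dt I-\wt P)\doteq\det(\dt I-\wt M)\cdot r$, via \Cref{th:det_form} applied to $X=X_f$, so that $\Gamma=\pi_1(X_f)$ and $\mathrm{rank}(H_1(X_f;\R))=\mathrm{rank}(H_1(\Gamma))$, with $r=1$ if this rank is $\ge 2$ and $r=\dt-1$ if it is $1$.

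The only genuinely new ingredient is the entrywise congruence $\wt M\equiv\wt A\pmod 2$. Indeed, for a basis edge $\wt e$ of $\wt G_0$, write the edge path $\wt f(\wt e)$ as $\tilde\eta_1\cdots\tilde\eta_L$ and express each oriented edge as $\tilde\eta_l=\epsilon_l\,g_l\wt{e_{i_l}}$, with $\epsilon_l\in\{\pm1\}$ recording its orientation, $g_l\in K$, and $i_l\in E$; then the $(e_i,e)$--entry of $\wt M$ is $\sum_{l:\,i_l=i}\epsilon_l g_l$, while that of $\wt A$ is $\sum_{l:\,i_l=i}g_l$, and these differ by $\sum_{l:\,i_l=i}(\epsilon_l-1)g_l\in 2\Z[K]$. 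Since the determinant is a polynomial in the matrix entries, this immediately upgrades to $\det(\dt I-\wt M)\equiv\det(\dt I-\wt A)\pmod 2$.

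Combining, reduction modulo $2$ of the three displayed identities gives
\[
\Delta_\Gamma\cdot\mf p\ \doteq\ \Delta_\Gamma\cdot\det(\dt I-\wt P)\ \doteq\ \det(\dt I-\wt M)\cdot r\ \equiv\ \det(\dt I-\wt A)\cdot r\ \doteq\ \mf m\cdot r\pmod 2,
\]
which is precisely the assertion: $\mf m\doteq\Delta_\Gamma\cdot\mf p\pmod 2$ when $\mathrm{rank}(H_1(\Gamma))\ge 2$, and $\mf m\cdot(\dt-1)\doteq\Delta_\Gamma\cdot\mf p\pmod 2$ when $\mathrm{rank}(H_1(\Gamma))=1$. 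I do not expect a real obstacle here — the statement is essentially a corollary of \Cref{th:polynomials}, \Cref{th:det_form}, and \Cref{lem:vertex_poly_formula} — and the only point demanding care is the one already present in the proof of \Cref{th:polynomials}: the three determinant formulas must be set up with respect to a single lift $\wt f$ and splitting $H=K\oplus\langle\dt\rangle$, so that $\wt A$, $\wt M$, $\wt P$ are literally the matrices of one graph map. Passing to $\Z/2[H]$ (still a UFD, so $\doteq$ retains its meaning) then collapses exactly the sign information by which $\wt M$ differs from $\wt A$, which is the same discrepancy responsible for conclusions (1) and (2) of \Cref{th:relation1}.
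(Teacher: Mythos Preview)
Your proof is correct and is in fact more direct than the paper's own argument. Both approaches rest on the entrywise congruence $\wt A\equiv\wt M\pmod 2$, but they use it differently. The paper first reruns the proof of \Cref{th:det_form} over $\Z/2$ to obtain a formula for $\Delta_2$, the first Fitting invariant of $H_1(\wt X,\wt T;\Z/2)$, and then must separately establish $r(\Delta_\Gamma)\doteq\Delta_2$ in $\Z/2[H]$; this last step requires universal coefficients, base change for Fitting ideals, and a new \Cref{cor:ideal} computing the Alexander ideal as $(\Delta_\Gamma)\cdot\mc A^p$. You instead simply reduce the three $\Z[H]$ identities from \Cref{th:det_form}, \Cref{thm:mcpoly_det_formula}, and \Cref{lem:vertex_poly_formula} modulo $2$ and invoke $\wt A\equiv\wt M\pmod 2$ at the determinant level; since units in $\Z[H]$ map to units in $\Z/2[H]$, the relation $\doteq$ survives reduction and the result falls out immediately. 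Your route is shorter and avoids introducing $\Delta_2$ altogether; the paper's route has the incidental payoff of proving \Cref{cor:ideal} and the equality $r(\Delta_\Gamma)\doteq\Delta_2$, which may be of independent interest.
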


We begin by noting that the proof of \Cref{th:det_form} holds with coefficients in $\mathbb{Z}/2$; that is, using the group ring $\mathbb{Z}/2[H]$. In short, if we denote the first Fitting invariant of $H_1(\wt X, \wt T; \mathbb{Z}/2)$ by $\Delta_2$, then 
\[
\Delta_2 \doteq (\dt-1)^l \cdot \frac{\det(\dt I -\wt M_2)}{\det(\dt I-\wt P_2)},
\]
where $l=0$ if $\mathrm{rank}(H_1(X)) \ge 2$ and $l=1$ otherwise. Here $\wt M_2$ and $\wt P_2$ are the mod $2$ reductions of $\wt M$ and $\wt P$, respectively, from \S\ref{sec:abelian_cover}. More formally, if we denote by $r \colon \mathbb{Z}[H] \to \mathbb{Z}/2[H]$ the mod $2$ reduction homomorphism and use the same notation for the corresponding ring homomorphism between matrix rings, then $\wt M_2 = r(\wt M)$ and $\wt P_2 = r(\wt P)$. 

If we denote the adjacency matrix for $\wt f$ (from \S\ref{sec:Mccone}) by $\wt A$ and similarly set $\wt A_2 = r(A)$, then $\wt A_2 = \wt M_2$ as matrices with coefficients in $\mathbb{Z}/2[H]$. Then note that
\[
r(\mf m) \doteq  r(\det(\dt I -\wt A)) = \det(\dt I -\wt A_2) =  \det(\dt I -\wt M_2),
\] 
and similarly $r(\mf p) = \det(\dt I -\wt P_2)$. We conclude that 
\[
(\dt-1)^l \cdot r(\mf m) \doteq \Delta_2 \cdot r(\mf p).
\]

It remains to prove that $r(\Delta) \doteq \Delta_2$ in $\mathbb{Z}/2[H]$, where $\Delta = \Delta_\Gamma$. First, note that by the universal coefficients theorem and the fact that $H_0(\wt X, \wt T) = 0$,
\begin{align*}
H_1(\wt X, \wt T; \mathbb{Z}/2) &= H_1(\wt X, \wt T) \otimes \mathbb{Z}/2\\
&= H_1(\wt X, \wt T) \otimes_{\mathbb{Z}[H]} \mathbb{Z}/2[H].
\end{align*}

Then by the general theory of Fitting ideals (e.g. \cite[Corollary 20.5]{eisenbud2013commutative}),
\[
\mathrm{Fit}_1(H_1(\wt X, \wt T) \otimes_{\mathbb{Z}[H]} \mathbb{Z}/2[H]) = r(\mathrm{Fit}_1(H_1(\wt X, \wt T)) ).
\]

To conclude we need the following corollary to the proof of \Cref{th:det_form}. 
It is similar to \cite[Theorem 5.1]{McMullen--Alexander_polynomial} in the $3$-manifold setting.

\begin{corollary}\label{cor:ideal}
Let $\mc A \subset \Z[H]$ be the augmentation ideal.
Then the Alexander ideal equals
\[
\mathrm{Fit}_1(H_1(\wt X, \wt T)) = (\Delta) \cdot \mc A^p,
\]
where $(\Delta)$ is the ideal generated by $\Delta$, and where
$p=1$ if $\mathrm{rank}(H_1(X))\ge 2$ and $p=0$ if $\mathrm{rank}(H_1(X)) =1$.
\end{corollary}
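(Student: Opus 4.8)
The plan is to refine the proof of \Cref{th:det_form}. That argument already pins down, up to units, every maximal minor of the presentation matrix $A$ from \eqref{eq:def_A}, and $\mathrm{Fit}_1(H_1(\wt X,\wt T))$ is by definition the ideal these minors generate; so the only new task is to compute that ideal rather than just the gcd of its generators.

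First I would note that, since Fitting ideals do not depend on the chosen presentation, \eqref{eq:def_A} gives $\mathrm{Fit}_1(H_1(\wt X,\wt T)) = (\Delta_1,\dots,\Delta_{|E|+1})$, where $\Delta_i$ is the determinant of $A$ with its $i$th row deleted, exactly as in the proof of \Cref{th:det_form}. Combining \Cref{lem:eko} with \Cref{lem:dets} as in that proof, one gets, for every index $i$, that $\Delta_i \doteq \beta_i\,\Delta_\Gamma$, where $d \colonequals \mathrm{gcd}\{\alpha(\wt e_j) : j\}$ and $\beta_i \colonequals \alpha(\wt e_i)/d \in \Z[H]$; this is legitimate because $d$ divides each $\alpha(\wt e_i)$, while any index with $\alpha(\wt e_i)=0$ automatically has $\Delta_i = 0$ and so is harmless. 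Consequently $\mathrm{Fit}_1(H_1(\wt X,\wt T)) = \Delta_\Gamma\cdot(\beta_1,\dots,\beta_{|E|+1})$, and it remains to identify the ideal $(\beta_1,\dots,\beta_{|E|+1})$ with $\mc A^p$.

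I would then split into the two cases exactly as in \Cref{th:det_form}, using $\alpha(\wt e_i) \doteq g_i-1$ with $H = \langle g_1,\dots,g_{|\mc E\setminus T|}\rangle$. If $\mathrm{rank}(H)\ge 2$, then $d = 1$, so $\beta_i \doteq g_i-1$; since the nontrivial $g_i$ generate $H$, these elements generate the augmentation ideal $\mc A$, giving $\mathrm{Fit}_1 = \Delta_\Gamma\cdot\mc A = (\Delta)\cdot\mc A^1$. If $\mathrm{rank}(H) = 1$, write $H = \langle\dt\rangle$ and $g_i = \dt^{k_i}$ with $\mathrm{gcd}\{k_i\} = 1$; then $d = \dt-1$ and $\beta_i \doteq 1+\dt+\cdots+\dt^{k_i-1} = (\dt^{k_i}-1)/(\dt-1)$. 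To see $(\beta_i)_i = (1) = \mc A^0$, I would invoke the classical identity $(\dt^{a}-1,\dt^{b}-1) = (\dt^{\gcd(a,b)}-1)$ in $\Z[\dt]$ — obtained by running the Euclidean algorithm on the exponents — to conclude $(\dt^{k_1}-1,\dots,\dt^{k_{|E|+1}}-1) = (\dt-1)$, and then cancel the common factor $\dt-1$ (valid since $\Z[\dt^{\pm1}]$ is a domain). This yields $\mathrm{Fit}_1 = \Delta_\Gamma\cdot(1) = (\Delta) = (\Delta)\cdot\mc A^0$, matching $p = 0$.

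The only genuinely new point beyond \Cref{th:det_form} is this last ideal computation: one must upgrade ``the gcd of the $1+\dt+\cdots+\dt^{k_i-1}$ equals $1$'' to ``they generate the unit ideal,'' which can fail in a general UFD but holds here thanks to the Euclidean structure of exponents in $\Z[\dt]$. Everything else is bookkeeping with the objects already assembled in \S\ref{sec:rel_hom} and the proof of \Cref{th:det_form}, and the whole argument parallels \cite[Theorem 5.1]{McMullen--Alexander_polynomial}.
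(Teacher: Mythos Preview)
Your proposal is correct and follows essentially the same route as the paper's proof: both start from $\mathrm{Fit}_1 = (\Delta_1,\dots,\Delta_{|E|+1})$, use \Cref{lem:eko} and \Cref{lem:dets} to express each $\Delta_i$ as (a unit times) $\alpha(\wt e_i)\,\Delta_\Gamma$ divided by the common gcd, and then identify the ideal generated by the $\alpha(\wt e_i)$ with $\mc A$ in each rank case. The one noteworthy difference is in the rank-$1$ case: the paper asserts that $\Z[\dt^{\pm1}]$ is a PID to conclude $(\alpha(\wt e_i))_i = (\dt-1)$, whereas you prove the needed ideal equality directly via the Euclidean identity $(\dt^a-1,\dt^b-1)=(\dt^{\gcd(a,b)}-1)$ --- your argument is in fact cleaner here, since $\Z[\dt^{\pm1}]$ is not a PID (e.g.\ $(2,\dt-1)$ is not principal), though the paper's conclusion $\mc A=(\dt-1)$ is of course still correct for the elementary reason that every augmentation-zero Laurent polynomial is divisible by $\dt-1$.
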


\begin{proof}

Using notation from \Cref{th:det_form}, we have $\mathrm{Fit}_1(H_1(\wt X, \wt T)) = \left( \Delta_1, \ldots, \Delta_{|E|+1}\right)$. Fix some index, say $k$, representing a vertical edge $\eta_k$ of $\mc E \setminus T$ so that $\wt e_k = \wt \eta_k$. Then from the proof of \Cref{th:det_form} we have 
\begin{align*}
(\dt-1)^{1-p} \Delta_k = \alpha(\wt \eta_k) \Delta_{\Gamma}.
\end{align*}
Now using \Cref{lem:eko} to relate $\Delta_i$ and $\Delta_k$, for every index $i\in \{1,\dots,\abs{E}+1\}$ we get 
\begin{align*}
(\dt-1)^{1-p} \Delta_i = \alpha(\wt e_i) \Delta_{\Gamma}.
\end{align*}
 
Since the image of $\alpha$ is the augmentation ideal, for $\mathrm{rank}(H_1(X))\ge 2$ and $1-p=0$ we get 
\[\left( \Delta_1, \ldots, \Delta_{|E|+1} \right ) = (\Delta_{\Gamma}) \cdot \mc A.\] 
For $\mathrm{rank}(H_1(X)) =1$ and $1-p = 1$,
we showed in the proof of \Cref{th:det_form} that the gcd of $\alpha(\wt e_i)$ is equal to $(\dt -1)$. Since $\Z[\dt^{\pm 1}]$ is a PID, we have $\mc A = ((z-1))$ and thus
\[\left( \Delta_1, \ldots, \Delta_{|E|+1} \right ) = (\Delta_\Gamma). \qedhere\]
\end{proof}

Hence, we see that
\[
\mathrm{Fit}_1(H_1(\wt X, \wt T; \mathbb{Z}/2)) = r(\Delta) \cdot r(\mc A^p),
\]
and so $\Delta_2 = r(\Delta) \cdot \mathrm{gcd} \; r(\mc A^p)$. 

If $p=0$, then clearly $\Delta_2 = r(\Delta)$.
Otherwise, $p=1$ and $r(\mc A)$ is the ideal in $\mathbb{Z}/2[H]$ generated by $1-g_i$, where recall the elements $g_i$ from the proof of \Cref{th:det_form}.  
In this case, however, there are $g_j$ and $g_k$ that generate a rank $2$ subgroup of $H$ and so $(1-g_j)$ and $(1-g_k)$ are still relatively prime in $\mathbb{Z}/2[H]$.
We conclude that $\mathrm{gcd} \; r(\mc A) = 1$. This shows that $\Delta_2 = r(\Delta)$ and completes the proof that
\[
(\dt-1)^l \cdot r(\mf m) \doteq r(\Delta) \cdot r(\mf p).
\]

\subsection{Newton polytopes and the cone of sections}
\label{sec:newton-polytope}
We conclude with an observation, which will be useful in \S\ref{sec:examples} below, that in the orientable case the cone of sections can be computed directly from the Alexander polynomial. Recall that the \define{Newton polytope} of an element $\mf q\in \Z[H]$ is the convex hull $N(\mf q) \subset H_1(X;\R)$ of the elements $h\in H$ appearing with nonzero coefficient in $\mf q$ (see \cite[Appendix]{McMullen--poly_invariants}). The \define{dual cone} of a vertex $v\in N(\mf q)$ is by definition the set of cohomology classes $u\in H^1(X;\R)$ that achieve a maximum value on $N(\mf q)$ precisely at the vertex $v$.

Letting $\mathrm{inv} \colon \Z[H] \to \Z[H]$ be the homomorphism sending $h$ to $h^{-1}$, \Cref{thm:DKL-cone-theorem} says that the cone of cross sections $\mc C_{\mf m}$ of the folded mapping torus $X^\dagger$ (which is also a component of $\BNS(\Gamma)$) is the dual cone of a vertex of $N(\mathrm{inv}(\mf m))$. When cone of sections contains an orientable class, the same holds for the Alexander polynomial:

\begin{lemma}
\label{lem:cone_via_newtwon_poly}
Suppose the cone $\mc C_{\mf m}$ contains an orientable primitive class, that is a class with $\lambda(u) = \rho(u)$. Then $\mc C_{\mf m}$ is equal to the dual cone of a vertex of $N(\mathrm{inv}(\Delta_{\Gamma}))$ (namely the unique vertex whose dual cone contains $u$).
 \end{lemma}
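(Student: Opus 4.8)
The strategy is to combine the factorization relating the three polynomials with the fact that the vertex polynomial $\mf p$ has a particularly simple Newton polytope whose effect on $\mc C_{\mf m}$ is trivial. First I would invoke \Cref{th:orientable_cone} and \Cref{th:anti_cone} to observe that the orientable class $u$ is either positively orientable (and then so is the whole cone) or negatively orientable; in either case the appropriate one of \Cref{th:polynomials} or \Cref{lem:or_class} gives, up to a unit, an equation of the form $\mf m \doteq \sigma(\Delta_\Gamma)\cdot \mf p$ (times $(z-1)$ in the rank-one case), where $\sigma$ is either the identity or the orientation involution $\iota$. Applying $\mathrm{inv}$ to this equation and using that $\mathrm{inv}$ commutes with $\iota$ (both act diagonally on $H$) gives $\mathrm{inv}(\mf m) \doteq \sigma(\mathrm{inv}(\Delta_\Gamma))\cdot \mathrm{inv}(\mf p)$ up to a unit.

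The second step is to understand Newton polytopes under this product. Since Newton polytopes are additive under multiplication in the group ring over an integral domain, $N(\mathrm{inv}(\mf m)) = N(\sigma(\mathrm{inv}(\Delta_\Gamma))) + N(\mathrm{inv}(\mf p))$ (a Minkowski sum), and multiplying by the unit or by $(z-1)$ only translates the polytope, which does not change dual cones of vertices. Moreover $\sigma$, being induced by a diagonal sign change $h\mapsto \pm h$, does not move the \emph{support} of any element — it only changes coefficients — so $N(\sigma(\mathrm{inv}(\Delta_\Gamma))) = N(\mathrm{inv}(\Delta_\Gamma))$. Thus $N(\mathrm{inv}(\mf m))$ is, up to translation, the Minkowski sum $N(\mathrm{inv}(\Delta_\Gamma)) + N(\mathrm{inv}(\mf p))$.

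The key point is then that $\mathrm{inv}(\mf p) = \prod_i(1 - c_i^{-1})$, so its Newton polytope is the Minkowski sum of the segments $[0, -c_i]$; in particular it is a polytope all of whose edges are parallel to vectors $c_i \in H_1(X;\R)$ that are \emph{positive} on $\mc C_{\mf m}$ (being classes of closed orbits of the semiflow, by \Cref{thm:DKL-cone-theorem}). By \Cref{thm:DKL-cone-theorem}, $\mc C_{\mf m}$ is the dual cone of a vertex $v$ of $N(\mathrm{inv}(\mf m))$. For a Minkowski sum, the face of $A+B$ maximized by a functional $u$ is the sum of the $u$-maximal faces of $A$ and of $B$; since every $u\in\mc C_{\mf m}$ is positive on each $c_i$, the $u$-maximal face of $N(\mathrm{inv}(\mf p)) = \sum_i [0,-c_i]$ is the single vertex $0$, for every $u\in\mc C_{\mf m}$. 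Hence over the cone $\mc C_{\mf m}$ the $u$-maximal face of $N(\mathrm{inv}(\mf m))$ coincides (up to the fixed translation by $0$) with the $u$-maximal face of $N(\mathrm{inv}(\Delta_\Gamma))$. Since $\mc C_{\mf m}$ is exactly the set of $u$ for which this face is a single vertex $v$ of $N(\mathrm{inv}(\mf m))$, it follows that $\mc C_{\mf m}$ is also exactly the dual cone of the corresponding vertex of $N(\mathrm{inv}(\Delta_\Gamma))$, namely the unique vertex whose dual cone contains $u$.

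The main obstacle is the bookkeeping around the two non-canonical modifications — the unit ambiguity in $\doteq$ and the extra $(z-1)$ factor in the rank-one case — together with confirming that the orientation involution $\sigma$ genuinely preserves supports and commutes with $\mathrm{inv}$; once these are pinned down, the Minkowski-sum argument is essentially formal. One should also double-check the degenerate possibility that some $c_i = 0$ in $H$ (which cannot happen, since closed orbits are positive on $\mc C_{\mf m}$ and hence nontrivial in $H$), so that each segment $[0,-c_i]$ is genuinely nondegenerate and the "maximal face is the vertex $0$" claim is literally correct, though even a degenerate segment would cause no harm.
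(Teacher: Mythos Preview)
Your Minkowski-sum approach is natural and the first half is correct: the factorization together with the fact that $\iota$ preserves supports gives $N(\mathrm{inv}(\mf m)) = N(\mathrm{inv}(\Delta_\Gamma)) + N(\mathrm{inv}(\mf p))$ up to translation, and for $u\in\mc C_{\mf m}$ the $u$-maximal face of $N(\mathrm{inv}(\mf p))$ is indeed the vertex $0$. This shows the inclusion $\mc C_{\mf m}\subset D_{\Delta}(v_\Delta)$, where $D_\Delta(v_\Delta)$ is the dual cone of the relevant vertex of $N(\mathrm{inv}(\Delta_\Gamma))$.

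The gap is in the reverse inclusion. From $\mc C_{\mf m} = D_\Delta(v_\Delta)\cap D_{\mf p}(0)$ you want to deduce $\mc C_{\mf m} = D_\Delta(v_\Delta)$, i.e.\ $D_\Delta(v_\Delta)\subset D_{\mf p}(0)$. Concretely, you must show every $u'$ in the dual cone of $v_\Delta$ satisfies $u'(c_i)>0$ for all vertex cycles $c_i$. Your argument establishes this only for $u'\in\mc C_{\mf m}$, which is the hypothesis, not the conclusion; the sentence ``it follows that $\mc C_{\mf m}$ is also exactly the dual cone'' asserts an equivalence but the equality of maximal faces has been verified only over $\mc C_{\mf m}$ itself. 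A priori the hyperplane $\{u:u(c_j)=0\}$ could cut through the interior of $D_\Delta(v_\Delta)$, making $\mc C_{\mf m}$ a proper subcone. Nothing in the polytope combinatorics alone rules this out.

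The paper closes this gap by a different route: it identifies $\lambda(\zeta)$, for $\zeta$ in a mod-$2$ congruence class inside $\mc C_{\mf m}$, as the largest root of the specialization of $\mathrm{inv}(\iota(\Delta_\Gamma))$ at $\zeta$ (via \Cref{lem:spec_char_poly}), uses from \cite{DKL2} that $\log\lambda(\zeta)\to\infty$ as $\zeta\to\partial\mc C_{\mf m}$, and then invokes \cite[Theorem~A.1]{McMullen--poly_invariants}, which says precisely that a cone on which the house of a polynomial blows up at the boundary must equal a full dual cone of a vertex of its Newton polytope. This analytic input is what forces the boundary of $\mc C_{\mf m}$ to coincide with walls of $N(\mathrm{inv}(\Delta_\Gamma))$ rather than with the extraneous hyperplanes $u(c_i)=0$.
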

\begin{proof}
Let $\iota\colon \Z[H]\to \Z[H]$ be the involution defined in \S\ref{sec:anti-orientable} in case that $u$ is negatively orientable, and let $\iota$ denote the identity if $u$ is positively orientable.
By \Cref{th:polynomials} and \Cref{lem:or_class}, we thus have  $\mf m \doteq \iota(\Delta_{\Gamma})\cdot \iota(\mf p)$.
By \Cref{th:orientable_cone} and \Cref{th:anti_cone} we also have
 $\lambda(\zeta) = \rho(\zeta)$ for every primitive integral $\zeta \in \mc C_{\mf m}$ that agrees with $u$ mod $2$. Thus \Cref{lem:spec_char_poly} says $\lambda(\zeta)$ is the reciprocal of the smallest root of the specialization $ \iota(\Delta_{\Gamma})^\zeta(t)$ or, equivalently, the largest root of the specialization of $\mathrm{inv}(\iota(\Delta_{\Gamma}))$ at $\zeta$ (see \Cref{rmk:mcMpoly_under_inv}). By \cite{DKL2}, $\log(\lambda(\zeta))$ tends to infinity as $\zeta$ tends to the boundary of $\mc C_{\mf m}$. 
Now by using  \cite[Theorem A.1]{McMullen--poly_invariants}, we conclude that $\mc C_{\mf m}$ is equal to the dual cone of a vertex of $N(\mathrm{inv}(\iota(\Delta_{\Gamma}))) = N(\mathrm{inv}(\Delta_\Gamma))$. 
 \end{proof}

\section{Examples}\label{sec:examples}
In this section we give several fully irreducible automorphisms $\varphi$ that illustrate aspects of the theory developed in the paper.
These also serve to contrast the situation for surface homeomorphisms and to highlight that the four stretch factors $\lambda_\varphi$, $\rho_\varphi$, $\lambda_{\varphi\inv}$, $\rho_{\varphi\inv}$ of an automorphism and its inverse are in general unrelated to each other. To this end, we say $\varphi$ has \emph{stretch factor symmetry} if $\lambda_\varphi = \lambda_{\varphi\inv}$.

Recall that a fully irreducible automorphism $\varphi$ of a free group is \emph{geometric} if it may be represented by a pseudo-Anosov homeomorphism on a (punctured) surface. In this case, the pseudo-Anosov property automatically implies $\lambda_{\varphi} = \lambda_{\varphi\inv}$, and one additionally has $\rho_\varphi = \rho_{\varphi\inv}$ provided the surface itself is orientable. These basic equalities can be deduced from the stronger facts that the Teichm\"{u}ller \cite{McMullen--poly_invariants} and Alexander \cite{blanchfield1957intersection, turaev1975alexander} polynomials of the fibered $3$-manifold determined by $\varphi$ are symmetric. In addition, if $\varphi$ is orientable then $\lambda_{\varphi} = \rho_\varphi$ and $\lambda_{\varphi^{-1}} = \rho_{\varphi^{-1}}$.  

The fully irreducible automorphism $\varphi$ is known to be geometric if and only if both its attracting and repelling trees are geometric \cite{guirardel2005coeur, handel2007expansion}; see also \cite{coulbois2012botany}.
The automorphism is \emph{parageometric} when the attracting tree is geometric but the repelling tree is not, and in this case  Handel--Mosher have shown  $\lambda_{\varphi} \neq \lambda_{\varphi^{-1}}$ \cite{handel2007expansion}.

The examples below show that orientable fully irreducible automorphisms need not exhibit the stretch factor symmetry enjoyed by pseudo-Anosovs, and that orientability of $\varphi$ and $\varphi\inv$ are in general independent. Throughout, we use Thierry Coulbois's train track Sage package \cite{coulbois} to check that the example graph maps represent fully irreducible automorphisms and to compute their stretch factors. The package can also be used to find periodic Nielsen paths in a stable train track representative of a fully irreducible automorphism $\varphi$. We then use \cite[Theorem 3.2]{BF:OuterLimits} to determine if the attracting tree $T_{\varphi}^+$ is geometric or nongeometric: $T_{\varphi}^+$ is geometric if and only if a stable train track representative contains an indivisible orbit of periodic Nielsen paths.

\begin{example}\label{example:O and none}
Here we give an example of a parageometric fully irreducible automorphism that is (positively) orientable but whose inverse is not orientable.

Let $\varphi$ be the fully irreducible automorphism of $F_3 = \langle a,b,c \rangle$ given by 
\[a\stackrel{\varphi}{\to} abbc,\qquad b\stackrel{\varphi}{\to} bcabbc,\qquad c \stackrel{\varphi}{\to} CBcabbc,\]
where throughout we use capital letters to denote inverse elements or reversed edges.
This is represented by a train track graph map $\map\colon G\to G$, where $G$ is the graph with two vertices $0,1$ and four edges $0 \stackrel{a}{\to} 0$, $0 \stackrel{b}{\to}1$, $1 \stackrel{c}{\to} 0$, and $1 \stackrel{d}{\to} 0$, 
where the marking is $a \leftrightarrow a$, $b\leftrightarrow bd$, $c\leftrightarrow Dc$, 
and where the map $\map$ is defined on edges by 
\[\map(a) = abdbc,\qquad  g(b) = bcab,\qquad g(c) = cabdbc,\qquad g(d) = dbc.\]
This graph map is clearly positively orientable, and thus $\lambda_\varphi = \rho_\varphi\sim 4.61$ both equal the largest root of the characteristic polynomial $1-3t+7t^2-6t^3+t^4$.
The inverse automorphism $\varphi\inv$
is represented by a train track map $g'$ on a graph $G'$ with two vertices $0,1$ and four edges $0 \stackrel{a}{\to} 1$, $0 \stackrel{b}{\to} 1$, $1 \stackrel{c}{\to} 0$, $0 \stackrel{f}{\to} 1$. Here the marking is $a \leftrightarrow aF$, $b \leftrightarrow bF$, $c \leftrightarrow fc$ and the map $g'$ is given by 
\[g'(a) = acf, \qquad g'(b) = bFCFa, \qquad g'(c) = AfcfA, \qquad g'(f) = b.\]
From this one easily calculates that $\lambda_{\varphi\inv} \sim 3.08$ but that $\rho_{\varphi\inv} \sim 2.15$. According to \Cref{thm:orientable-auto}, neither the graph map $g'$ nor automorphism $\varphi\inv$ is orientable.
\end{example}

\begin{example}\label{example:Anti anti}
Our next example shows that even if a fully irreducible outer automorphism and its inverse are both (negatively) orientable, they need not be geometric. In fact, here $\varphi^{-1}$ is parageometric with $\lambda_{\varphi} \sim 2.17$ and $\lambda_{\varphi^{-1}} \sim 3.72$. Note that squaring provides a positively orientable pair $\varphi^2, \varphi^{-2}$ that also fails to be geometric.

For $F_3 = \langle a, b, c \rangle$, let $\varphi$ be  given by 
$a \to BA$, $b \to CAA$, $c \to B$. 
This map is in fact a negatively orientable train track map on the 3-petal rose labeled $a,b,c$. 
The automorphism $\varphi^{-1}$ is given by 
$a \to Ac$, $B \to c$, $C \to bAcAc$, 
which is also a negatively orientable train track representative. 
The characteristic polynomials of $\varphi$ and $\varphi^{-1}$ on homology are $-1+3t+t^2-t^3$ and $-1+3t+3t^2-t^3$, respectively. 
\end{example}

\begin{example}
\label{sec:AntiO}
Our final example provides fully irreducible outer automorphisms $\varphi$ and $\varphi^{-1}$ that are negatively orientable and satisfy stretch factor symmetry. That is, $\lambda_{\varphi} = \lambda_{\varphi^{-1}} = \rho_{\varphi} = \rho_{\varphi^{-1}} \sim 3.73$ and yet $\varphi$ is not a geometric automorphism. Indeed, the attracting and repelling trees associated to $\varphi$ are nongeometric and the McMullen polynomial of the free-by-cyclic group $\Gamma$ associated to $\varphi$ is not symmetric. 

We also use this example to detail the computation of the Alexander and McMullen polynomials using \Cref{th:det_form} and \Cref{lem:or_class}, and to illustrate 
how orientability varies in the cone of sections in keeping with \Cref{th:anti_cone}.

\begin{figure}[ht]
    \centering
    \def\svgwidth{.45\columnwidth}
    \import{ExamplesAndFigures/}{AntiO_G.pdf_tex}
    \caption{Graph $G$}
    \label{fig:AntiO_G}
\end{figure}

Let $G$ be the graph in \Cref{fig:AntiO_G} with two vertices $v,w$, and with four oriented edges $a,c,d,f$ from $w$ to $v$ and two oriented edges $b,e$ from $v$ to $w$.
Let $\map \colon G \to G$ be the, clearly negatively orientable, graph map defined on edges as
\[a \mapsto DEABF,\;\; b \mapsto EDE,\;\; c \mapsto ABF,\;\; d \mapsto CEDBA,\;\; e \mapsto BAB,\;\; f \mapsto CED.\] 
Choosing $f$ as the maximal subtree gives a basis $aF, fb, cF, dF, fe$ of $F_5 =\pi_1(G,w)$; denoting these as $a, b, c, d, e$ for brevity,
the corresponding automorphism is 
\[a \stackrel{\varphi}{\mapsto} DEABdec,\; b \stackrel{\varphi}{\mapsto} CEDEDE,\; c \stackrel{\varphi}{\mapsto} ABdec,\; d \stackrel{\varphi}{\mapsto} CEDBAdec,\; e \stackrel{\varphi}{\mapsto} CEDBAB\]
 
Let $X$ be the mapping torus of $g$ and $H$ its homology. Recall from \S\ref{sec:abelian_cover} that $K$ is the image of $\pi_1(G)\to \pi_1(X)\to H$. 
A direct computation using the standard group presentation of $\Gamma = F_5 \rtimes_{\varphi} \mathbb{Z}$ gives
$[aF] = [bf]= [cF] \neq 0$ and $[dF] = [ef]=0$ 
in $H$ and hence in $K$. 
Denoting the non-trivial homology class by $\alpha$, the free abelian cover $\wt{G}_0\to G$ with deck group $K$ is therefore as depicted in \Cref{fig:AntiOGab1}. 
Fix base lifts $\tilde v, \tilde w, \tilde a, \tilde b, \dots$ of the cells of $G$ as indicated in the figure, and let  $\tilde{\map} \colon \wt{G}_0 \to \wt{G}_0$ to be the unique lift of $\map$ such that $\tilde\map (\tilde v) = \tilde w$. This determines a splitting $H = \langle \alpha \rangle \oplus \langle \dt \rangle$, as described in \S\ref{sec:abelian_cover}, where $\dt$ maps to $-1$ and $\alpha$ to 0 under the map $\pi_1(X) \to \mathbb{Z}$.

\begin{figure}[ht]
    \centering{\def\svgwidth{.5\columnwidth}}
    \import{ExamplesAndFigures/}{AntiOGab1.pdf_tex}
    \caption{The free abelian cover $\wt{G}_0$}
    \label{fig:AntiOGab1}
\end{figure}

We now compute the $\Z[K]$--matrices $\wt{P}$ and $\wt{M}$ describing the action of $\tilde\map$ on $C_1(\wt G_0)$. We already know $\tilde{\map}(\tilde v) = \tilde w$.
To find $\tilde{\map}(\tilde w)$ we look at the image of the edge $\tilde{e}$ with end points $\tilde{v}$ and $\tilde{w}$. Since $\wt{\map}(\tilde{v}) = \tilde{w}$, $\wt{\map}(\tilde{e})$ starts at $\tilde{w}$, traverses the edges $\alpha \tilde{B}, \tilde{A}, \alpha \tilde{B}$ in that order and ends at the vertex $\alpha \tilde{v}$. Therefore, $\wt{\map}(\tilde{w}) = \alpha \tilde{v}$. 
The images $\tilde{\map}(\tilde{a}),\tilde{\map}(\tilde{b}),\dots$ of the other base edges are found in the same manner. We thus compute:
\[\wt{P} = \begin{pmatrix}
0 & \alpha \\ 1 & 0 \end{pmatrix},
\qquad
\text{and}
\qquad
\wt{M} = \begin{pmatrix}
            -1 & 0 &-1 &-1 &-1 &0 \\
            -\alpha &0 &-\alpha &-\alpha &-2\alpha &0 \\
            0 &0 &0 &-1 &0 &-1 \\
            -\alpha &-1 &0 &-1 &0 &-1 \\
            -\alpha &-2 &0 &-1 &0 &-1 \\
            -\alpha &0 &-\alpha &0 &0 &0 
           \end{pmatrix}.
\]

From this we can easily compute the polynomial invariants associated to $\varphi$. By the determinant formula in \Cref{th:det_form}, the Alexander polynomial is
\begin{flalign*}
 \Delta_{\Gamma}  \doteq \frac{\det(\dt I-\wt{M})}{\det(\dt I-\wt{P})} 
 &  = \frac{\dt^6+2\dt^5+\dt^4(1-8\alpha)+\dt^2(8\alpha^2-\alpha)- 2\alpha^ 2\dt - \alpha^3}{\dt^2-\alpha} \\
 & = \dt^4+2\dt^3+\dt^2(1-7\alpha)+2\alpha \dt + \alpha^2. 
\end{flalign*}
The McMullen polynomial satisfies $i(\mf m) \doteq \Delta_{\Gamma} \cdot \mf p$, where 
$\epsilon \colon H= \langle \alpha \rangle \oplus \langle \dt \rangle \to \Z / 2\Z$ is the map $\epsilon(\alpha) = 1$ and $\epsilon(\dt) = -1$
and 
$i \colon \Z[H] \to \Z[H]$ is the involution given by $i(h) = \epsilon(h)h$.
Thus by \Cref{lem:or_class} we compute that 
\[
\mf m \doteq i(\Delta_{\Gamma} \cdot \mf p) \doteq \dt^6- 2\dt^5+\dt^4(1-8\alpha)+\dt^2(8\alpha^2-\alpha) +2\alpha^2 \dt - \alpha^3.
\]

One may check that the inverse $\varphi\inv$ is represented by the train track map 
\[
\begin{array}{lllll}
 a\mapsto DIFEHAB, & b\mapsto EDIFE, & c\mapsto AB, &d\mapsto GBIA,& e\mapsto BIABI, \\ 
 & f\mapsto CED, &g\mapsto H, &h\mapsto IFG,& i\mapsto IF
\end{array}
\]
on the graph with 4 vertices and 9 edges $4 \stackrel{a}{\to} 1$, $2 \stackrel{b}{\to} 4$, $0 \stackrel{c}{\to} 1$, $2 \stackrel{d}{\to} 3$, $3 \stackrel{e}{\to} 0$, $0 \stackrel{f}{\to} 1$, $4 \stackrel{g}{\to} 0$, $1 \stackrel{h}{\to} 3$, $1 \stackrel{i}{\to} 2$. This map is clearly negatively orientable and may be used to confirm the stretch factor symmetry $\lambda_{\varphi\inv}= \lambda_\varphi$.

In cohomology $H^1(\Gamma) = \langle \alpha^* \rangle \oplus \langle \dt^*\rangle$,  the splittings $F_5\ast_{\varphi}\cong \Gamma \cong F_5\ast_{\varphi\inv}$ correspond to the classes $u_0 = (0,-1)$ and $-u_0 = (0,1)$. According to \Cref{lem:cone_via_newtwon_poly} the two components of $\BNS(\Gamma)$ containing $\pm u_0$ are both dual cones of vertices of the Newton polytope of $\mathrm{inv}(\Delta_\Gamma)$, which is the convex hull of the homology classes $(0,-4)$, $(0,-3)$, $(0,-2)$, $(-1,-2)$, $(-1,-1)$, $(-2,0)$. These cones $\mc C_\pm$ for $\varphi^\pm$ are thus as depicted in \Cref{fig:tikz_cones}.
According to \Cref{th:anti_cone} every primitive class $u$ in $\mc C_+$ or $\mc C_-$ is either negatively orientable or non-orientable depending on whether $u$ agrees with $u_0$ mod $2$. Notice that while the pair $\pm u_0$ has stretch factor symmetry, this need not hold for other classes in the cones. For example, the classes $u' = (2,-3)$ and $-u' = (-2,3)$ are negatively orientable so, by \Cref{lem:spec_char_poly}, their stretch factors are the reciprocals of the smallest roots of their respective specializations of $\Delta_\Gamma$. Hence one calculates that $\lambda(u') = \rho(u') \sim 1.43092$ but that $\lambda(-u') = \rho(-u')  \sim 1.36225 $.

\begin{figure}
\begin{tikzpicture}[scale=.6]
\pgfmathsetmacro\x{6};
\pgfmathsetmacro\y{4};
\pgfmathsetmacro\xm{\x-1};
\pgfmathsetmacro\ym{\y-1};

\filldraw[opacity=.2,color=cyan] (\x,0) -- (\x,-\y) -- (-\x,-\y) -- (-\x,-.5*\x) --(0,0);
\filldraw[opacity=.2,color=lime] (0,0) -- (2*\y/3,\y) -- (-\x,\y) -- (-\x,-.5*\x);
\foreach \i in {-\xm, ..., \xm}
{\foreach \j in {-\ym, ..., \ym}
{
\draw[opacity=.1,gray,thin] (-\x,\j) -- (\x,\j);
\draw[opacity=.1,gray,thin] (\i,-\y) -- (\i,\y);
}}
\draw[gray!40!black, thick,->] (-\x,0) -- (\x+.2,0) node [above left] {$\alpha^*$};
\draw[gray!40!black, thick,->] (0,-\y) -- (0,\y+.2) node [below right] {$\dt^*$};
\draw[black,ultra thick,dashed] (0,0) --  (2*\y/3,\y) node[above] {$3\dt^*=2\alpha^*$};
\draw[black,ultra thick,dashed] (0,0) --  (\x,0);
\draw[black,ultra thick,dashed] (0,0) --  (-\x, -.5*\x) node[left] {$2\dt^*=\alpha^*$};
\draw [green!40!black] plot [only marks, mark size=3.5, mark=*] 
 coordinates {
(-4,3) (-2,3)
(-4,1) (-2,1)  
(-4,-1) (2,-1) (4,-1) 
(-4,-3) (-2,-3) (2,-3) (4,-3) 
(8,1)
};
\draw [red!70!black] plot [only marks, mark size=3, mark=square*] 
 coordinates {
(-5,3) (-1,3) (1,3)
(-5,2) (-3,2) (-1,2) (1,2)
(-5,1) (-3,1) (-1,1) 
(-1,0)
(-5,-1) (-3,-1) (-1,-1) (1,-1) (3,-1) (5,-1) 
(-5,-2) (-3,-2) (-1,-2) (1,-2) (3,-2) (5,-2)
(-5,-3) (-1,-3) (1,-3) (5,-3)
(8,-1)
};

\path (\x+.25, -\y+1.5) node[right] {$\mc C_+$};
\path (-\x-.25,\y-1.5) node[left] {$\mc C_-$};
\path (8.5,1) node[right, align=left] {negatively orientable\\ primitive classes};
\path (8.5,-1) node[right, align=left] {non-orientable\\ primitive classes};
\filldraw[green!40!black] (0,-1) circle (5pt) node[below right] {$u_0$};
\filldraw[green!40!black] (0,1) circle (5pt) node[above left] {$-u_0$};
\filldraw[green!40!black] (2,-3) circle (5pt) node[below right] {$u'$};
\filldraw[green!40!black] (-2,3) circle (5pt) node[above] {$-u'$};
\end{tikzpicture}
\caption{Cones of cross sections $\mc C_+$ and $\mc C_-$ for $\varphi$ and $\varphi\inv$. 
}
\label{fig:tikz_cones}
\end{figure}
\end{example}

\bibliographystyle{alpha}
\bibliography{orientable_maps}

\end{document}